\definecolor{darkblue}{HTML}{2C326E}
\definecolor{sectboxrulecol}{rgb}{0,0,0.5}
\definecolor{sectboxfillcol}{rgb}{0.95,0.95,1}
\definecolor{subsectboxrulecol}{rgb}{0,0.5,0}
\definecolor{subsectboxfillcol}{rgb}{0.95,1,0.95}
\definecolor{subsubsectboxrulecol}{rgb}{0.5,0.5,0}
\definecolor{subsubsectboxfillcol}{rgb}{1,1,0.9}
\newtheorem{proposition}{Proposition}
\newtheorem{lemma}{Lemma}
\newtheorem{corollary}{Corollary}
\newtheorem{definition}{Definition}
\theoremstyle{remark}
\newtheorem{example}{Example}
\newtheorem{remark}{Remark}
\lstdefinestyle{mystyle}{
backgroundcolor=\color{backcolour},
commentstyle=\color{codegreen},
keywordstyle=\color{magenta},
basicstyle=\tiny,
breaklines=true,
keepspaces=true,
numbers=left,
numbersep=5pt,
}
\definecolor{codegreen}{rgb}{0,0.6,0}
\definecolor{backcolour}{rgb}{0.95,0.95,0.92}
\lstdefinestyle{mystyle}{
backgroundcolor=\color{backcolour},
commentstyle=\color{codegreen},
keywordstyle=\color{magenta},
basicstyle=\small,
breaklines=true,
keepspaces=true,
numbers=left,
numbersep=5pt,
}
\definecolor{codegreen}{rgb}{0,0.6,0}
\definecolor{backcolour}{rgb}{0.95,0.95,0.92}
\begin{document}
\begin{center}
\vspace*{2cm} \noindent {\bf \large The $OB$-splines -- efficient orthonormalization of the $B$-splines}\\
\vspace{1cm} \noindent {\sc Xijia Liu$^\dagger$, Hiba Nassar$^\ddagger$, Krzysztof Podg\'{o}rski}$^\ddagger$\\
\vspace{1cm}
{\it \footnotesize 
$^\dagger$Department of Mathematics and Mathematical Statistics, Ume{\aa}   University,\\
$^\ddagger$Department of Statistics, Lund University}
\end{center}


\begin{abstract}
 A new efficient orthogonalization of the $B$-spline basis is proposed and contrasted with some previous orthogonalized methods. 
The resulting orthogonal basis of splines is best visualized as a {\it net} of functions rather than a sequence of them. 
For this reason, the basis is referred to as a {\em splinet}.
The splinets feature clear advantages over other spline bases.
They efficiently exploit `near-orthogonalization' featured by the $B$-splines and gains are achieved at two levels:  {\em locality} that is exhibited through small size of the total support of a splinet and {\em computational efficiency} that follows from a small number of orthogonalization procedures needed to be performed on the $B$-splines to achieve orthogonality. 
These efficiencies are formally proven by showing the asymptotic rates with respect to the number of elements in a splinet. 
The natural symmetry of the $B$-splines in the case of the equally spaced knots is  preserved in the splinets, while quasi-symmetrical features are also seen for the case of arbitrarily spaced knots. 
\end{abstract}

{\small
{\bf keywords:} Basis functions, $B$-splines, orthogonalization, splinets.\\

\vspace{-.09cm}
{\bf AMS:} 65D07, 65F25
}

\section{Introduction}
\label{sec:intro}
The $B$-splines are the most popular linear bases of splines \cite{Boor1978APG, schumaker2007spline}. One of their main advantages lies in their locality, i.e. their supports are local and controlled by distribution of the knots in the interval over which the splines are constructed. 
If the knots are equally spaced, then the splines are distributed in a `uniform' fashion over the entire range.
Unfortunately, the $B$-splines bases are not orthogonal, which adds a computational burden when they are used to decompose a function. 
Since any basis can be orthogonalized, it is to the point to consider orthonormalization of the $B$-splines. 
However, orthogonalization can be performed in many different ways leading to essentially different orthogonal spline (O-splines) bases, see \cite{nguyen2015construction,goodman2003class,cho2005class}.
In this work, we discuss orthogonalization methods that are natural for the $B$-splines. 
As our main contribution, we propose orthogonalization which is argued to be the most appropriate since it, firstly, preserves most from the original structure of the $B$-splines and, secondly, obtains computational efficiency both in the basis element evaluations and in the spectral decomposition of a functional signal.

Although fundamentally different, the proposed method was inspired by one-sided and two-sided orthogonalization discussed in \cite{mason1993orthogonal}.
Since the constructed basis spreads a net of splines rather  than a sequence of them, we coin the term {\it splinet} when referring to such a base.  
It is formally shown that the splinets, similarly to the $B$-splines, feature locality with a small size of the total support. 
If the number of knots over which the splines of a given order are considered is $n$, then
the total support size of the $B$-splines is on the order $O(1)$ with respect to $n$, while the corresponding splinet has the total support size on the order of $\log n$ which is only slightly bigger.
On the other hand, the previously discussed orthogonalized bases have the total support size of the order $O(n)$, where $n$ stands for the number of knots which is also the number of basis functions (up to a constant).
Moreover, if one allows for negligible errors in the orthonormalization, then the total support size no longer will depend on $n$, i.e. becomes constant and thus achieving the rate of the original $B$-splines.

The main goal of this work is to deliver efficient orthonormalization of the $B$-splines $B_j$, $j=1,\dots, d$, i.e. we propose a $d\times d$ matrix $\mathbf P$ such that the splines
$$
OB_i=
\sum_{j=1}^d P_{ji}B_j
, ~~ i=1,\dots, d
$$
constitute a convenient orthogonal and normalized basis.
However, the construction can be viewed as an effective diagonalization method for an arbitrary positive definite matrix $\mathbf H$ with non-zero terms only on the $2k-1$-diagonals symmetrically placed above and below the main diagonal, a so-called band matrix. 
The construction uses a generic algorithm that is independent of the spline context.  
Since this problem is of interest on its own terms, we also present the obtained orthogonalizing transformations. 
For a band matrix $\mathbf H$, the algorithm defines $\mathbf P$ such that
\begin{equation}
\label{eq:diag}
\mathbf I=\mathbf P^\top\mathbf H \mathbf P.
\end{equation}
An example of $\mathbf P$ obtained from the orthonormalization algorithm is shown in \ref{fig:matrices}.
In the matrix interpretation of our approach, the equispaced case corresponds to a symmetric Toeplitz (diagonal-constant) matrix, which  in  \ref{fig:matrices} has been presented for the bandwidth equal to seven.
One observes sparsity of $\mathbf P$, which is one of several benefits of the proposed method. 
Additionally to the sparsity, it is shown that most non-zero terms are negligible as it is also seen in \ref{fig:matrices}.   
\begin{figure}[t!]
\includegraphics[width=0.49\textwidth]{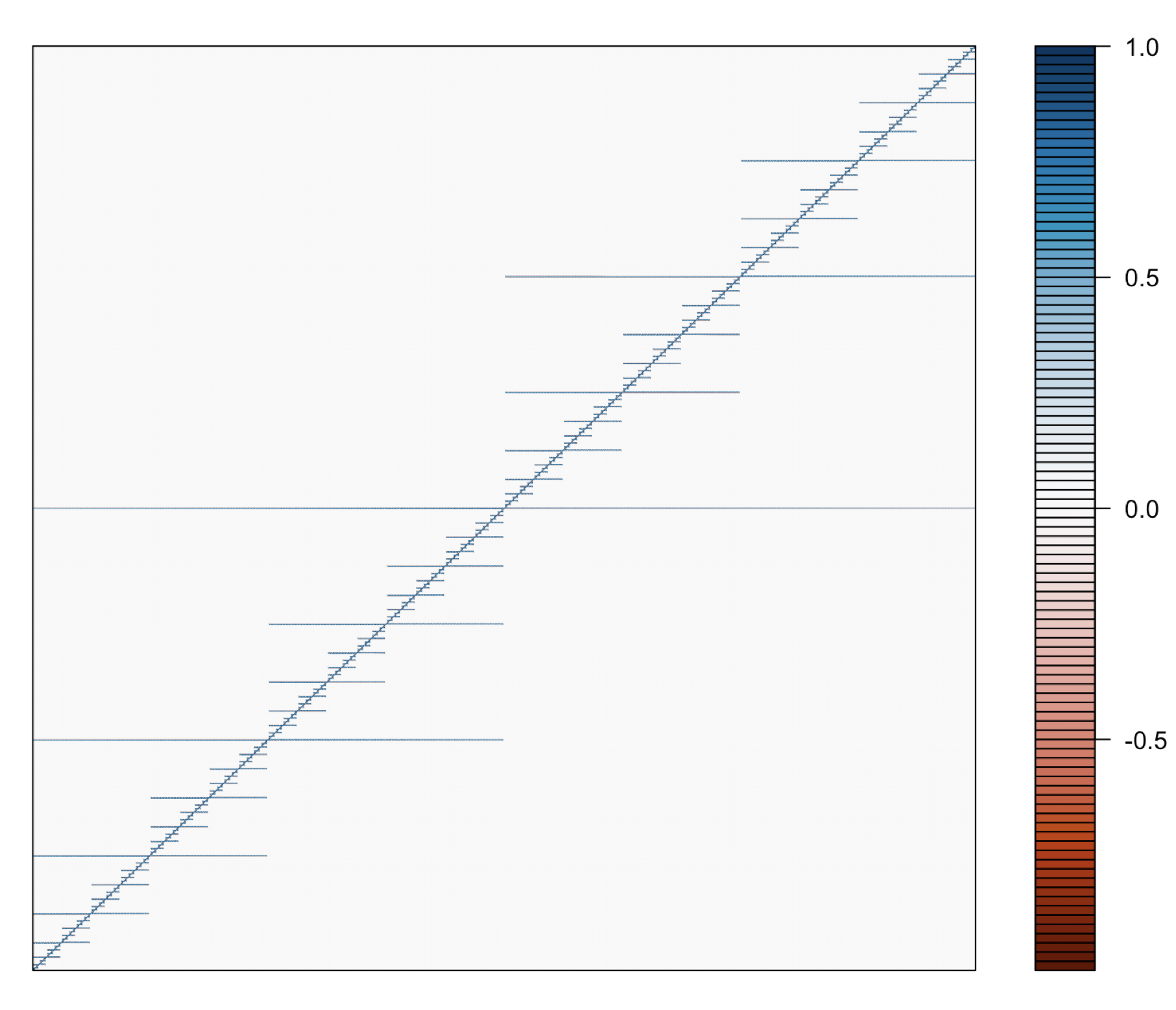}\includegraphics[width=0.49\textwidth]{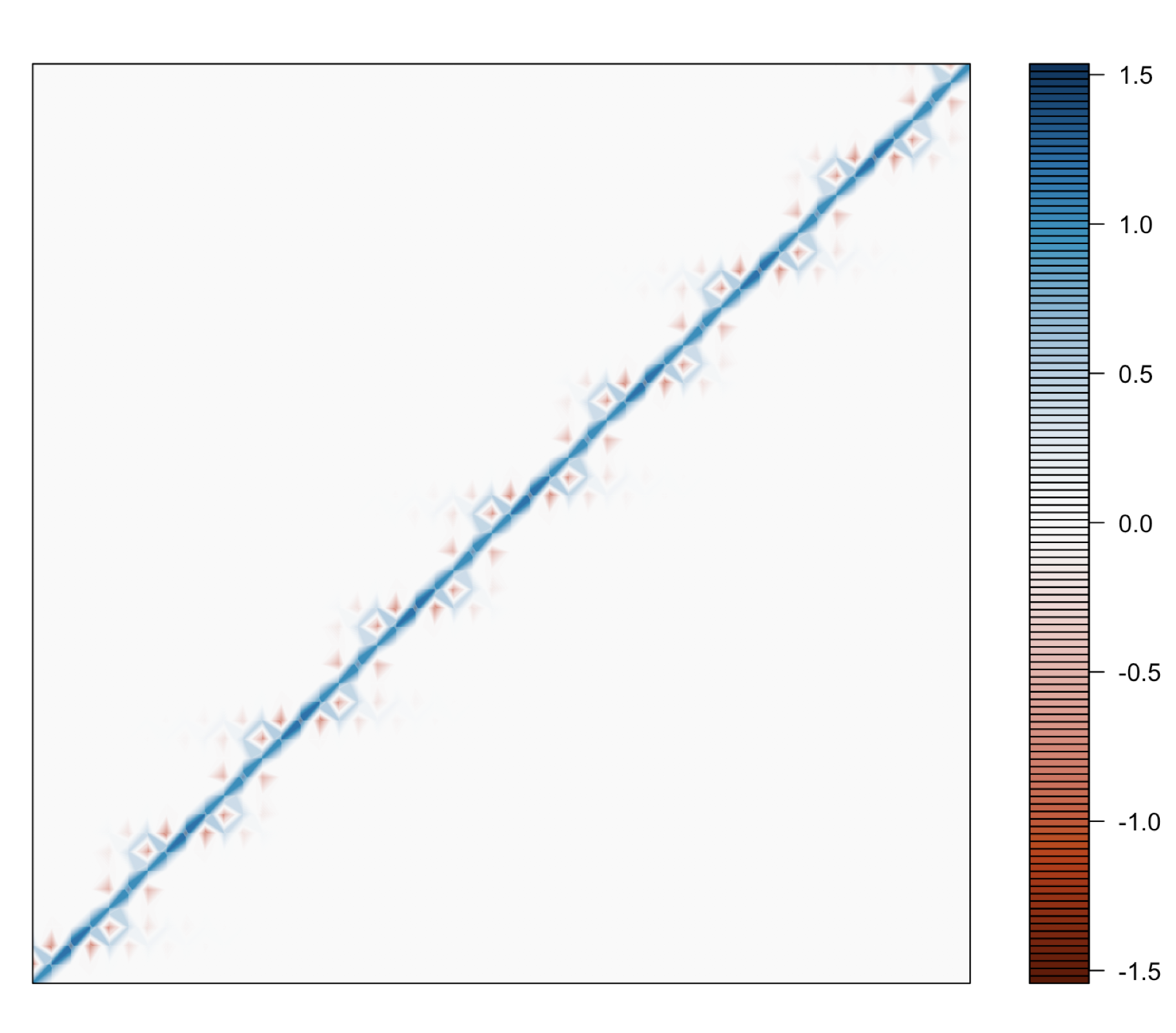}\\
\raisebox{4mm}{\includegraphics[width=0.49\textwidth]{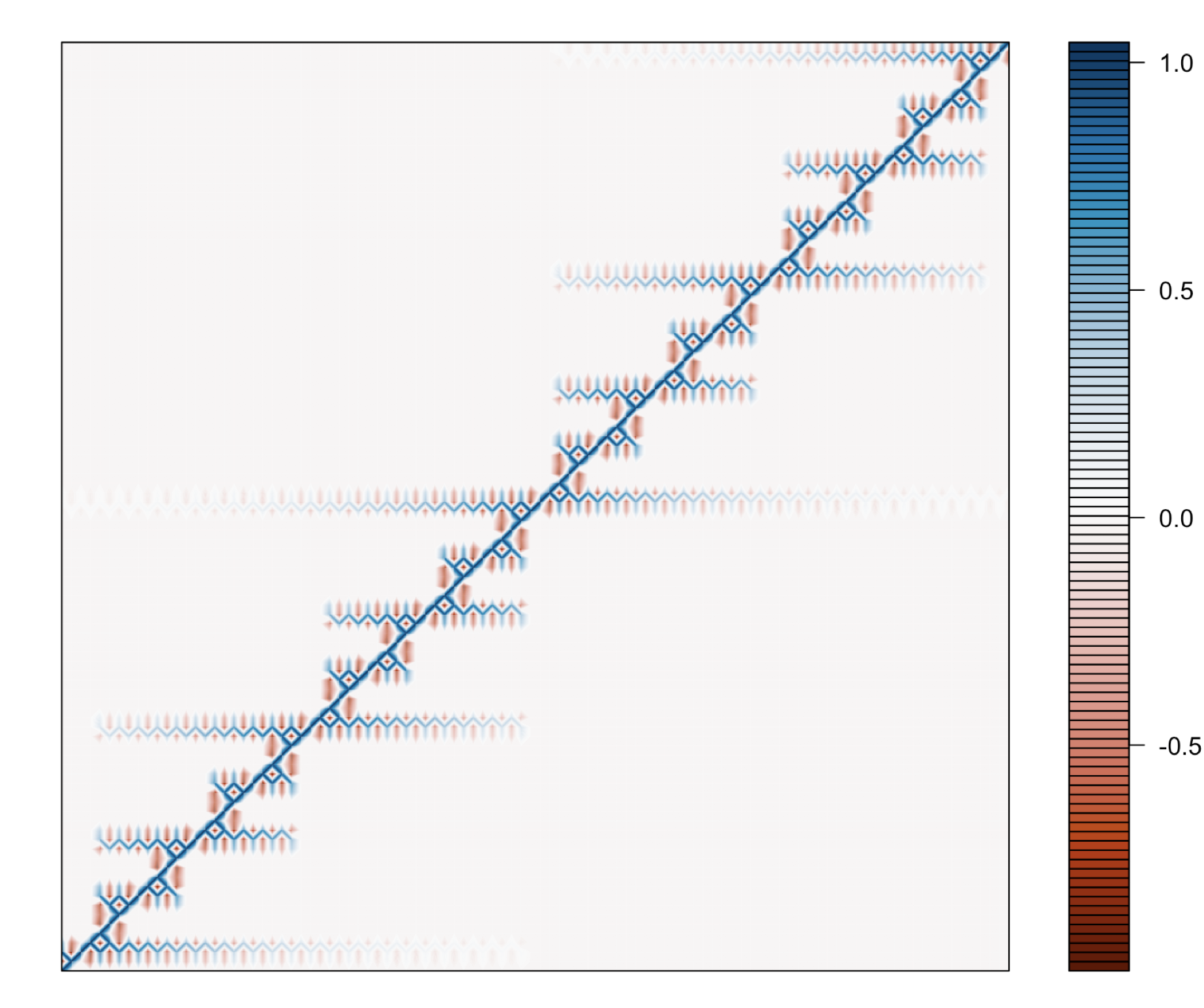}} \includegraphics[width=0.485\textwidth, height=0.44\textwidth]{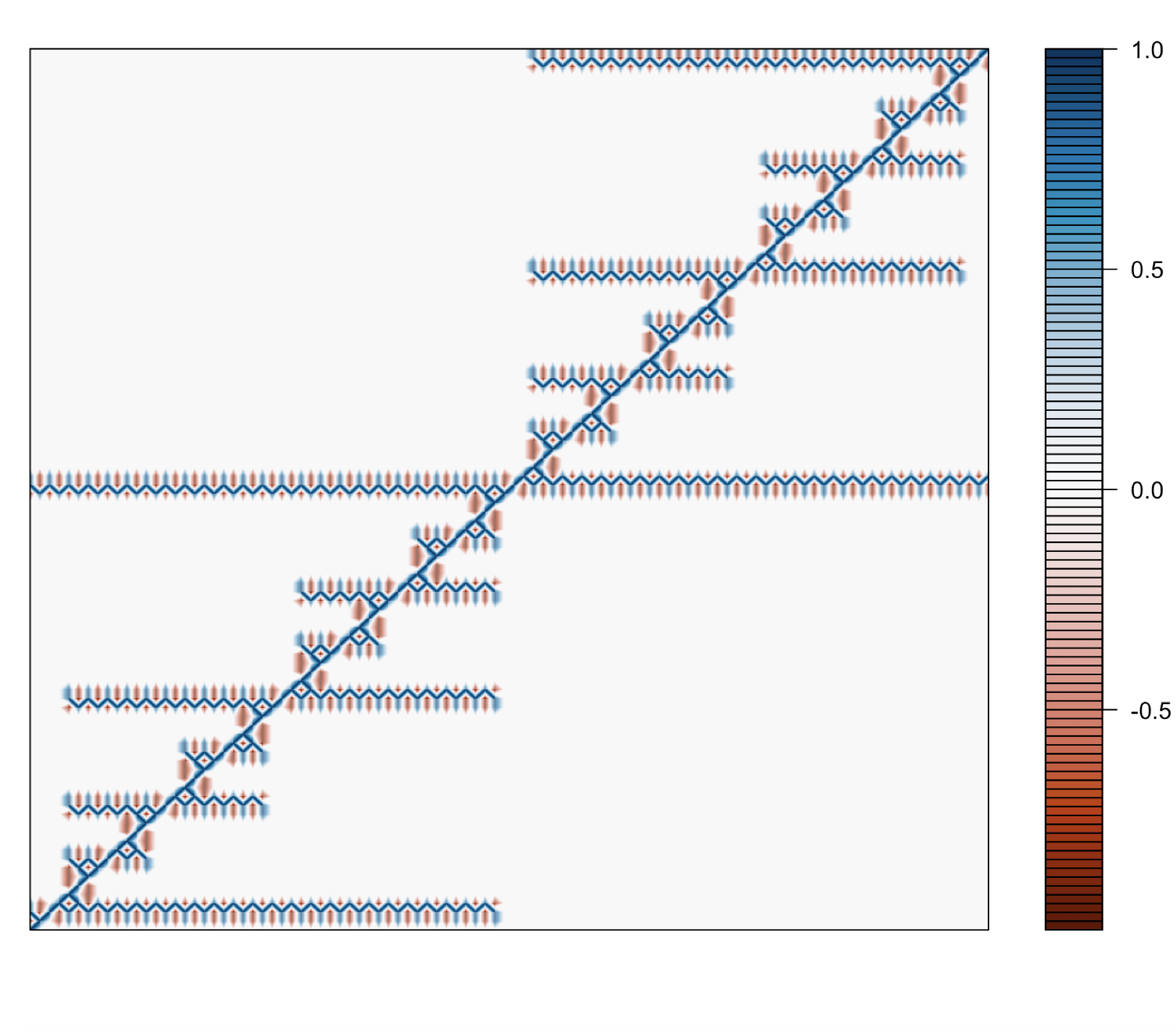}
\caption{A diagonalization matrix $\mathbf P^\top$  (the lower-left corner corresponds to the first entry in  $\mathbf P^\top$) for the case of a symmetric Toeplitz $ 1533 \times 1533$ matrix with $7$ non-zero diagonals. 
The full matrix $\mathbf P$ is graphically presented at the top-left graph, where to show all non-zero entries, they have been raised to power $0.001$. 
That most of the non-zero terms are negligible is seen in the next three pictures where, first, in the upper-right corner, the central $50\times 50$ submatrix of $\mathbf P$ is shown, then in the lower row, we see the central $100\times 100$ sub-matrix in which the original terms have been raised to $0.1$ (left) and to $0.01$ (right).  }
\label{fig:matrices}
\end{figure}
%
%

The organization of the material is as follows. We start in \ref{sec:basics} with a brief account of the $B$-spline bases that establishes notation and recall basic facts. 
Then in \ref{sec:simpleorth} two $B$-spline orthogonalization methods that appeared in the literature are reviewed.
This is followed by \ref{sec:splinets} devoted to the construction of the {\it splinets}, which we deem the central contribution of this work.  
In this section, we also discuss the generic Hilbert space algorithms and relate them to the problem of efficient sparse diagonalization of the band matrices. 
In  \ref{sec:eff}, we present the efficiency of the new method and show an upper bound for non-zero entries away of the diagonal. 
Mathematical proofs are placed in the supplementary material, \ref{sec:pfsal}. 
Additionally, the work is supplemented in \ref{sec:ni} by an introduction to computationally convenient representation of splines that allows to benefit from the properties of the splinets. 
Implemented R-package {\tt splinets} is utilizing these benefits and its numerical fundamentals are also discussed in the supplement. 

\section{Basics on the $B$-splines}
\label{sec:basics}
In functional analysis, it is often desired that functions considered are continuous or even differentiable up to a certain order. For that spline functions are often used. 
For a given set of knots and an order, a spline between two subsequent knots is equal to a polynomial of the given order that is smoothly connected at the knots to the polynomials over the two neighboring intervals. 
The order of smoothness at the knots is equal to the number of derivatives that are continuous at these knots, including the zero order derivative, i.e. the function itself.
A selection of an order that is higher than zero makes splines a smoother alternative to the piecewise constant functions.
The splines of a given order and over a given set knots form a finite dimensional space and one can consider a suitable basis of functions that spans it.  
There are different possible choices but the most popular are $B$-splines. 
This section unifies the notation and provides the most fundamental facts about the $B$-splines  used later throughout the paper. 
Since all results are either simple or well-known (although spread throughout vast literature), the arguments provided are omitted or sketchy.

\subsection{Knots and boundary conditions}
\label{subsec:knbc}
The default domain for splines is $(0,1]$, although it is not truly a restriction since for any interval $(a,b]$, we have a natural transformation 
\begin{equation}
\label{eq:abtransform}
(T_{a}^{b}x)(t)=x\left((t-a)/(b-a)\right)
\end{equation}
between splines given on $(0,1]$  to the ones given on $(a,b]$. 
The set of knots is represented as a vector $\boldsymbol \xi$ of ordered values. 

In the literature, there are considered two alternative but in a certain sense equivalent requirements on the behavior of a spline at the endpoints of its range.
In the first one, no boundary conditions are imposed. 
In which the case, for proper handling recurrent formulas defining the $B$-splines, the knots $\boldsymbol \xi$ need to be extended by adding some initial knots located at zero (the initial endpoint) and the same number of knots located at one (the terminal endpoint). 
Those knots are called superfluous and distinguished from the internal knots that are assumed to be different one from another. 
The number of superfluous knots is equal to the $B$-spline order that we aim at.
The number  of internal knots is denoted by $n$. 
If the order  is $K$, then the total number of the knots is $n+2K+2$ and thus 
\begin{equation}
\label{eq:knots}
\boldsymbol \xi =(\xi_{0},\dots, \xi_{K}, \xi_{K+1},\dots, \xi_{K+n},\xi_{K+n+1},\dots,\xi_{2K+n+1}),
\end{equation}
where $\xi_{0}=\dots =\xi_{K}=0$ and $\xi_{K+n+1}=\dots =\xi_{2K+n+1}=1$ are superfluous knots and endpoints and the internal knots are $\xi_{K+1}<\dots < \xi_{K+n}$. 
For a given $k\le K$, the $l$th $B$-spline of order $k$ built for knots $\boldsymbol \xi$ is denoted as $B_{l,k}^{\boldsymbol \xi}$, where $l=0,\dots, n+2K$, see the next subsection and  \ref{prop:equivalence} for details of the definition. 
The first $K-k$ and the last $K-k$ of all $B$-splines are called superfluous since they are essentially defined over $(\xi_{i},\xi_{i+1}]$
 and $(\xi_{K+k+i+n+1},\xi_{K+k+i+n+2}]$, $i=0,\dots, K-k-1$ and these intervals are empty sets.
These superfluous $B$ splines are introduced only for the sake of a convenient formulation of the recurrent formulas. 
Whenever $k$ is omitted in the notation it is assumed to be equal to $K$, i.e. the highest order of considered splines. 
We note that the dimension of the linear space of splines built upon $\boldsymbol \xi$ is $(n+1)(K+1)-Kn=K+n+1$ and the count for the dimension of splines goes as follows: $(n+1)(K+1)$ because of the number of coefficient of $n+1$ polynomials and $Kn$ stands for the number of the conditions making $K-1$ values derivatives and the value of the function (the $0$-order derivative) equal at the $n$ internal knots. 
\begin{figure}[t!]
\begin{center}
\includegraphics[width=0.5\textwidth]{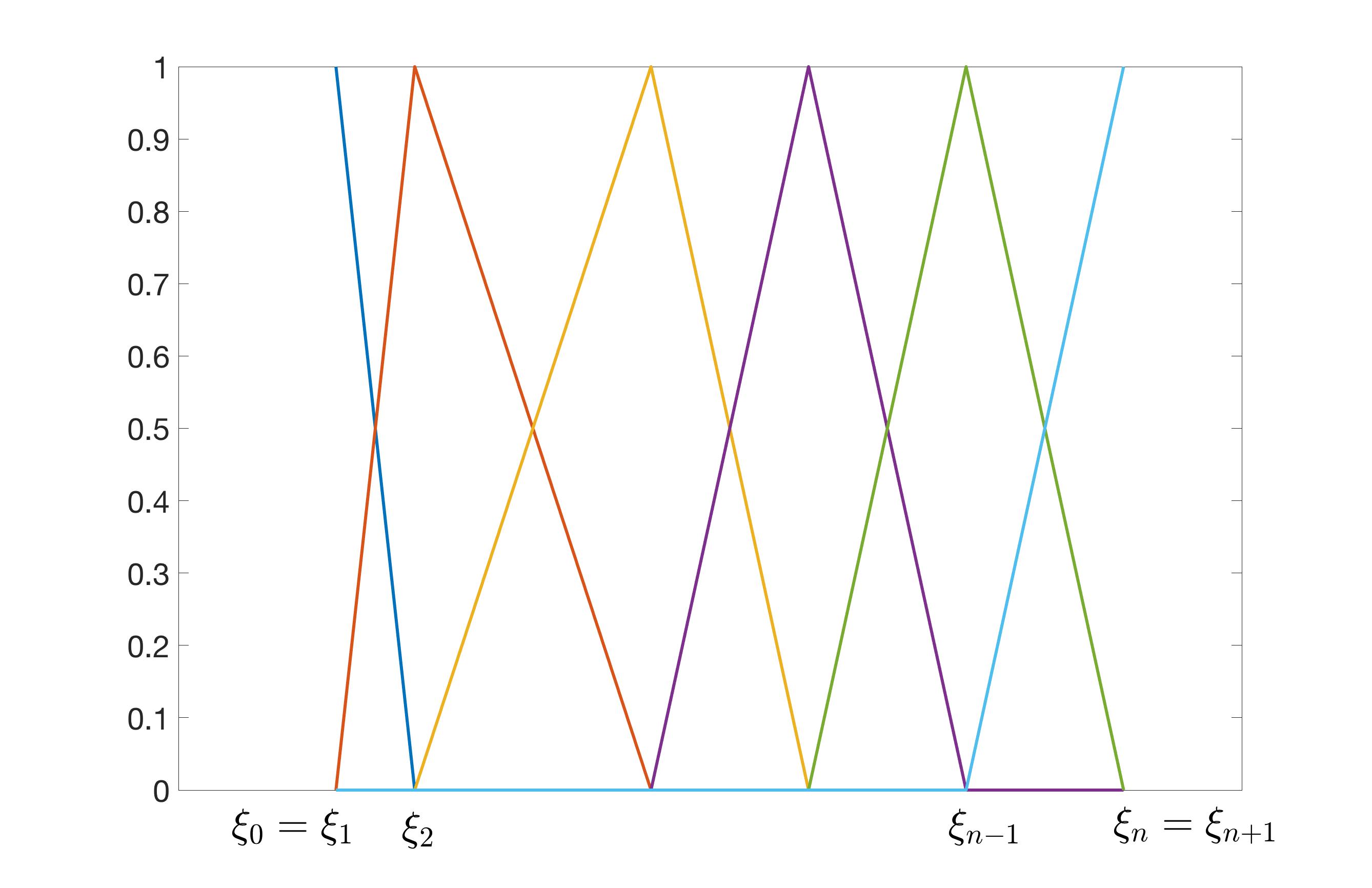}\vspace{-3mm}\hspace{-3mm}
\includegraphics[width=0.5\textwidth]{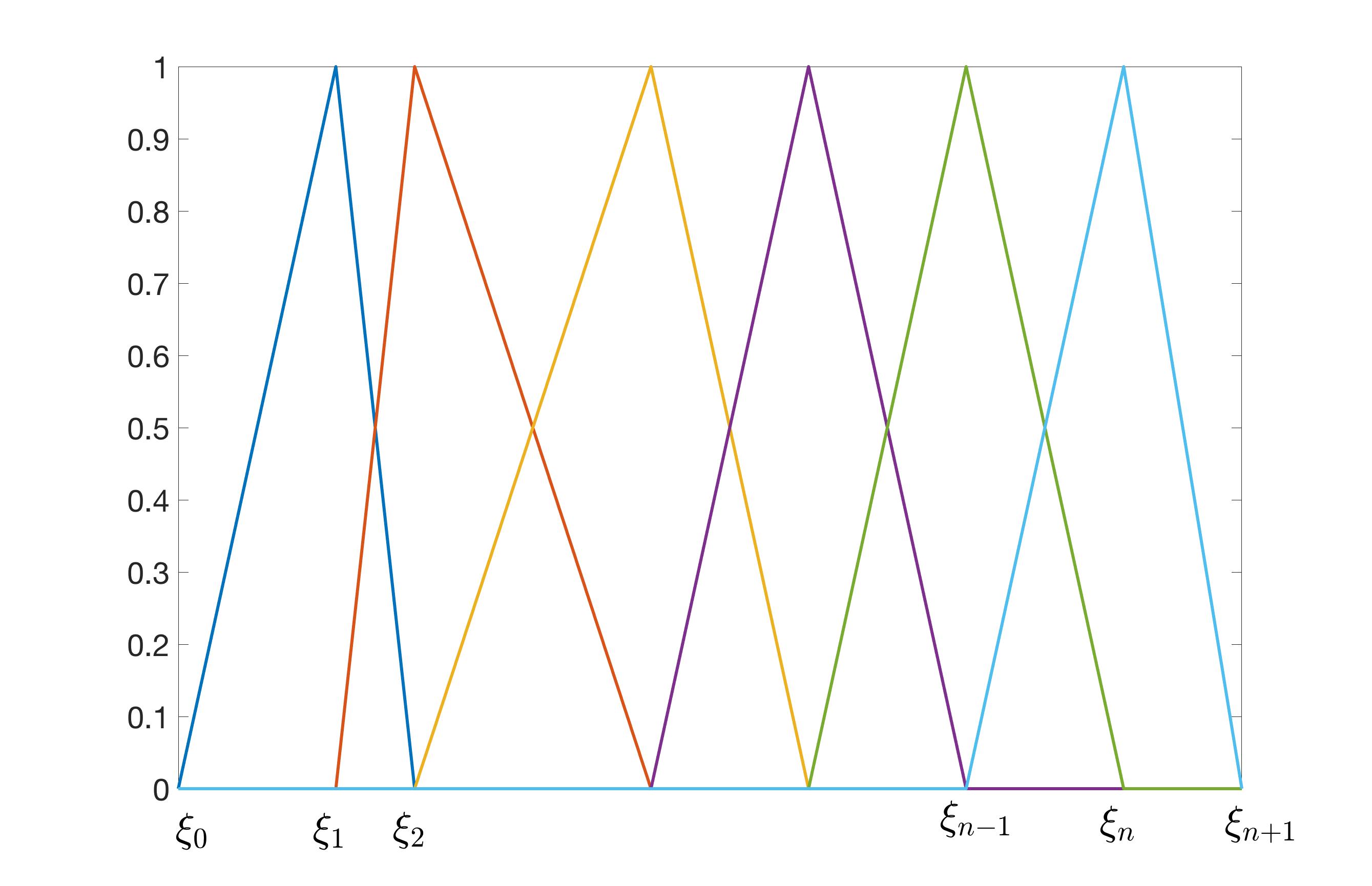}\\
\includegraphics[width=0.5\textwidth]{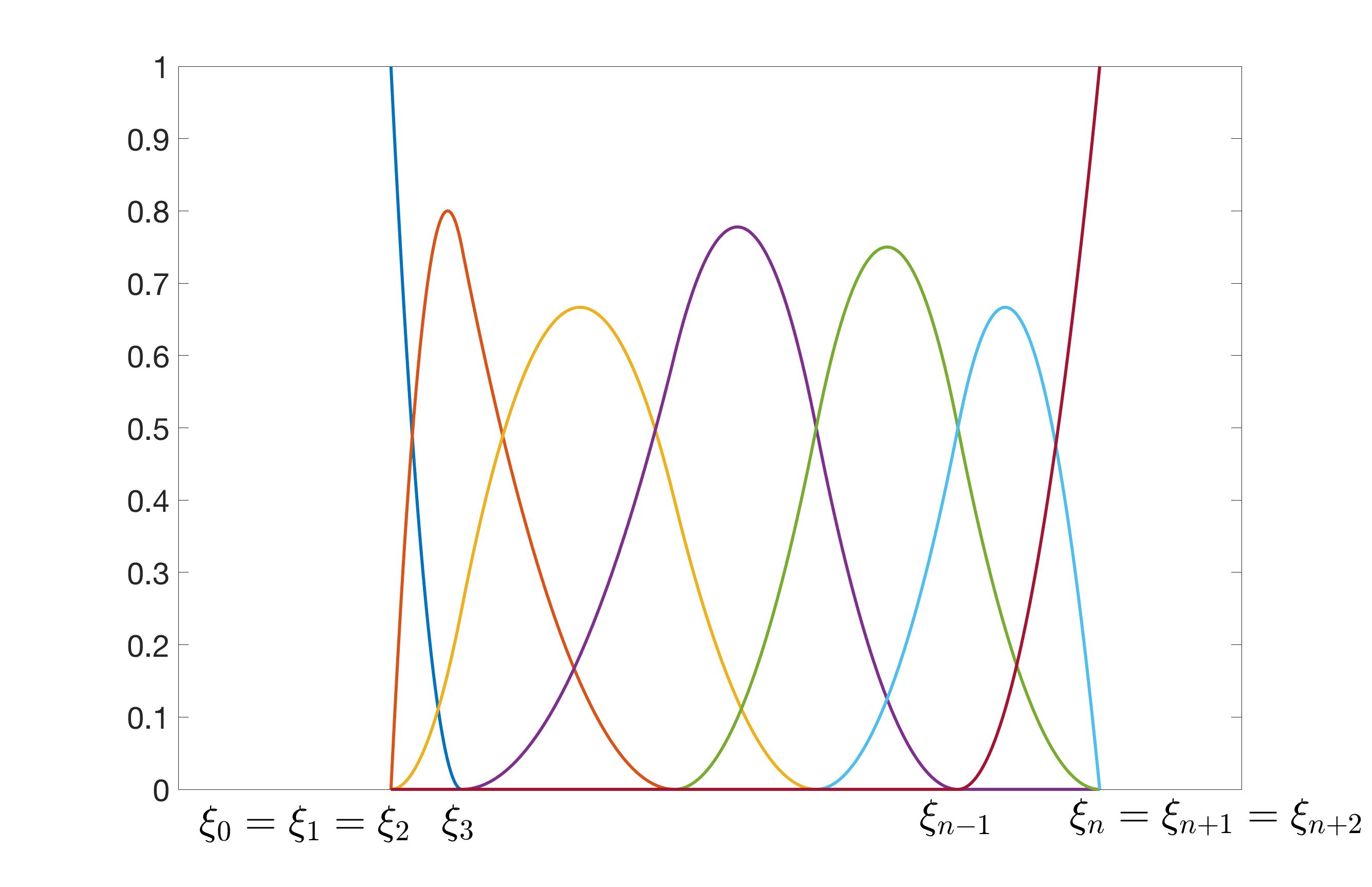}\vspace{-3mm}\hspace{-3mm}
\includegraphics[width=0.5\textwidth]{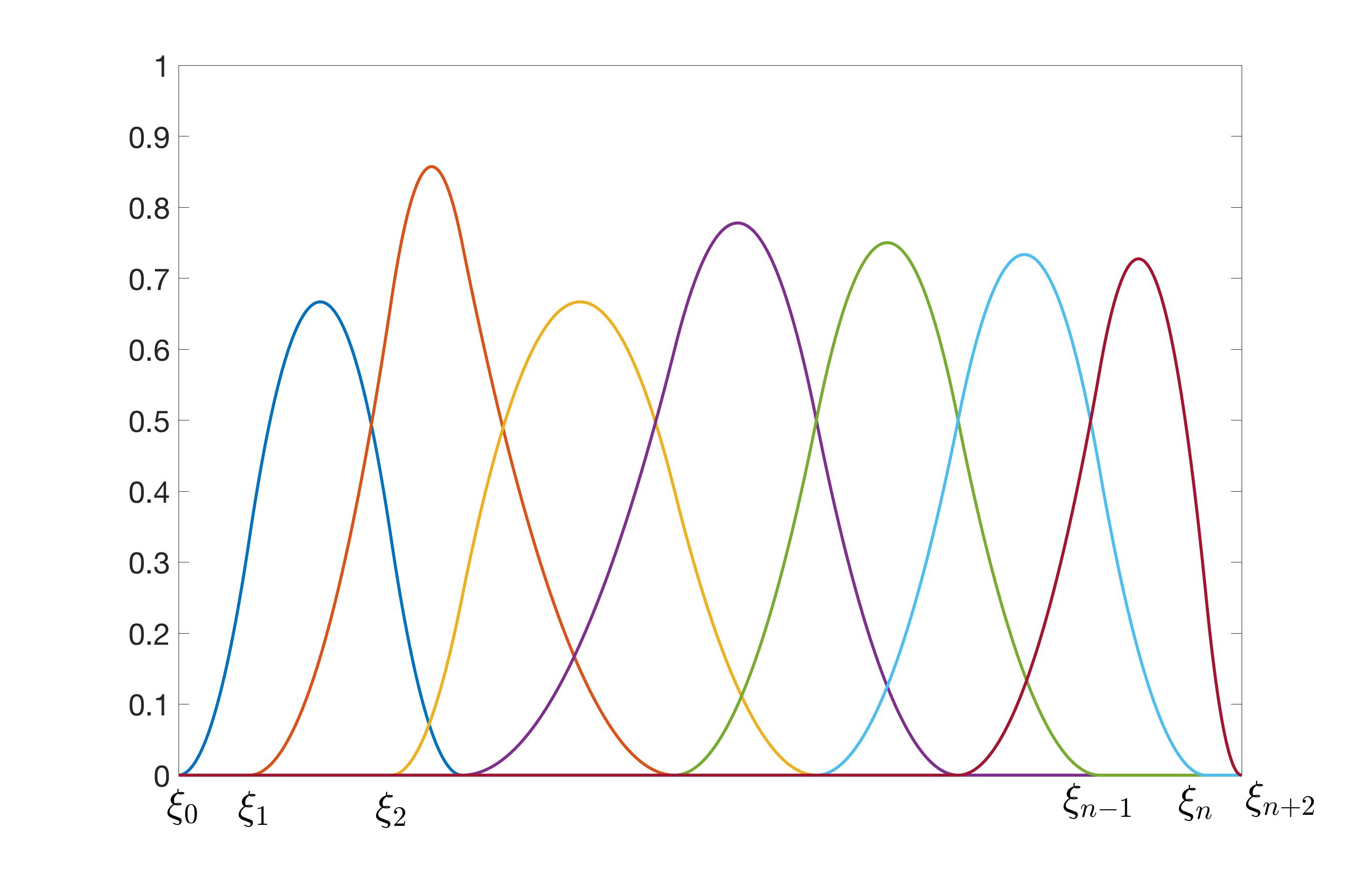}\\
\includegraphics[width=0.5\textwidth]{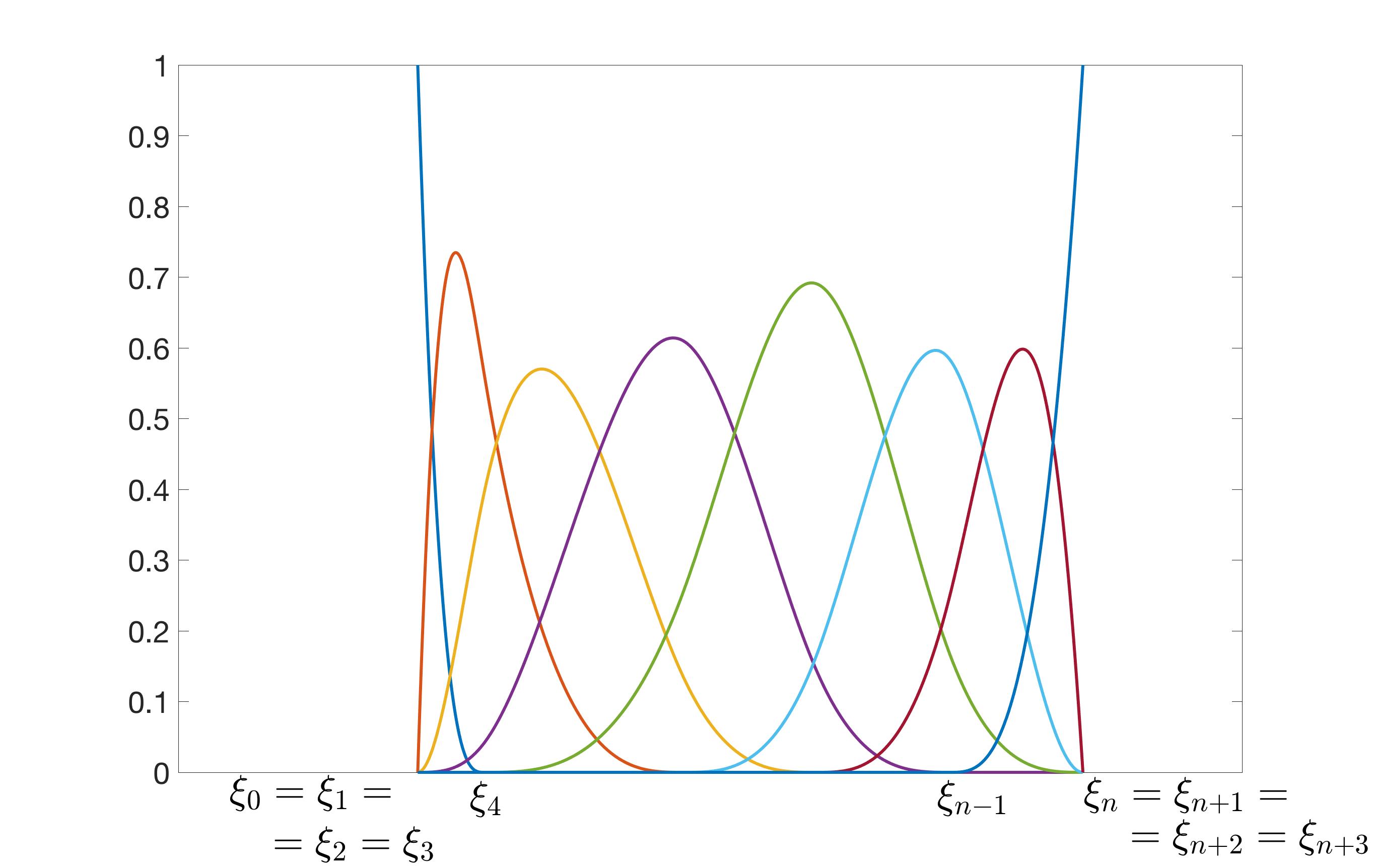}\hspace{-3mm}
\includegraphics[width=0.5\textwidth]{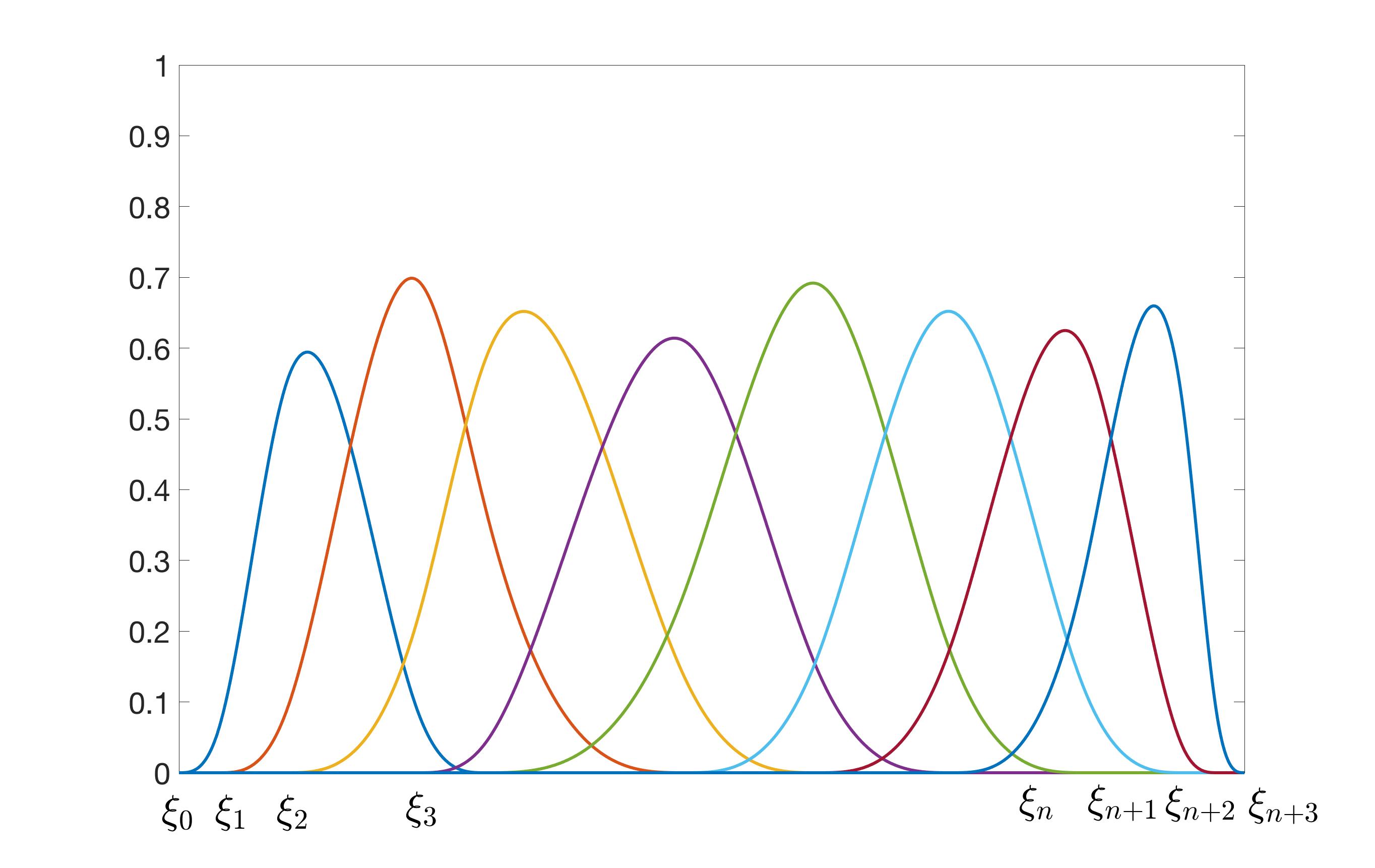}\vspace{-2mm}
\end{center} 
\caption{\small  \small The two approaches to the endpoints for the first {\it (top)}, second {\it (middle)}, and third {\it (bottom)} order $B$-splines: superfluous knots at endpoints {\it (left)},
imposed zeros as the initial conditions at the endpoints {\it (right)}.} 
\label{fig:TwoApp}
\end{figure}

The second approach is slightly more elegant as it does not introduce any superficial knots.
It rather imposes on a spline and all its derivatives of the order smaller than the spline order  the value of zero  at both the  endpoints of the domain. 
In this case, if we introduces knots through \ref{eq:knots}, we no longer assume that $\xi_{0},\dots, \xi_{K}$ and $\xi_{K+n+1},\dots,\xi_{2K+n+1}$ are taking values zero and one, respectively, but consider them to be different and ordered. 
Since in this case, there is no need to distinguish between knots it will be more common to write them simply as
\begin{equation*}
\label{eq:knots2}
\boldsymbol \xi =(\xi_{0},\dots, \xi_{n+1}),
\end{equation*}
where  $n\ge K$, in order to have enough knots to define at least one non-trivial spline with the $2K$ boundary conditions.
Indeed, if $n=K-1$, then we have $K+1$-knots yielding $K$ between knot intervals. On each such interval  a spline is equal to a polynomial of order $K$.
The dimension of the space of such piecewise polynomial functions is $K(K+1)$. However, at each internal knot there are $K$ equations to make derivative up to order $K$ (excluding) continuous. 
This reduces the space by $(K-1)K$ dimensions to $2K$, however there are $2K$ equations for the derivatives to be zero at the endpoints and the dimension of the spline space is eventually reduced to zero.

We note the dimension of the splines with the imposed boundary conditions is $(n+1)(K+1)-2K-nK=n+1-K$ and the counts is made as follows: there is $n+1$ intervals with polynomials having $K+1$ coefficients, from which one subtracts $2K$ initial conditions and $nK$  continuity conditions at the $n$ internal knots.

The two linear spaces of splines, the one with unrestricted splines at the endpoints and the one with the boundary conditions at the endpoints and $2K+n+1$ internal points, have the same dimension. 
In fact, the first one can be obtained from the second by passing to the limit 
$$
\xi_{0},\dots, \xi_{K-1} \rightarrow \xi_K, \,\,\,\xi_{K+n+1},\dots,\xi_{2K+n}\rightarrow \xi_{2K+n+1}
$$
and transforming through $T_{a}^{b}$ given in \ref{eq:abtransform} with $a=\xi_K$ and $b=\xi_{2K+n+1}$.
\ref{fig:TwoApp} illustrates these relations for the first, second, and third order $B$-splines. 
A formal formulation of this fact is given in \ref{prop:equivalence} at the end of the next subsection. 

\subsection{Recurrent definition of the $B$-splines}
\label{subsec:recdef}
The most convenient way to define the $B$-splines on the knots $\boldsymbol \xi =\left(\xi_0,\dots, \xi_{n+1} \right)$, $n=0,1,\dots$ is through the splines with the boundary conditions and using the recurrence on the order. 
Namely, once the $B$-splines of a certain order are defined, then the $B$-splines of the next order are easily expressed by their `less one' order counterparts. 
In the process, the number of the splines decreases by one and the number of the initial conditions (derivatives equal to zero) increases by one at each endpoint. 
We keep the notation $B^{\boldsymbol \xi}_{l,k}$, for the $l$th $B$-spline of the order $k$, $l=0,\dots ,n-k$. 
For the zero order splines, the $B$-spline basis is made of indicator functions 
\begin{equation}
\label{eq:indi}
B^{\boldsymbol \xi}_{l,0}=\mathbb I_{(\xi_{l},\xi_{l+1}]}, ~~~l=0,\dots n,
\end{equation}
 for the total of $n+1$-elements and zero initial conditions. 
Clearly, the space of zero order splines (piecewise constant functions) is $n+1$ dimensional so the so-defined zero order $B$-splines constitute the basis.

%

The following recursion relation leads to the definition of the splines of arbitrary order $k \le n$.
Suppose now that we have defined $B^{\boldsymbol \xi}_{l,k-1}$, $l=0,\dots,n-k+1$. 
The $B$-splines of order $k$ are defined, for $l=0,\dots,n-k$, by
\begin{equation}
\label{eq:recspline}
B_{l,k}^{\boldsymbol \xi }(x)
=
 \frac{ x- {\xi_{l}}
  }{
  {\xi_{l+k}}-{\xi_{l}} 
  } 
 B_{l,k-1}^{\boldsymbol \xi}(x)+
  \frac{{\xi_{l+1+k}}-x}{
 {\xi_{l+1+k}}-{\xi_{l+1}} 
 } 
 B_{l+1,k-1}^{\boldsymbol \xi}(x).
\end{equation}

It is also important to notice that the above evaluations need to be performed only over the joint support of the splines involved in the recurrence relation.
The recurrent structure of the support is as follows. 
For zero order splines, the support of $B_{l,0}^{\boldsymbol \xi}$ is clearly $[\xi_l,\xi_{l+1}]$, $l=0,\dots, n$. 
If the supports of $B_{l,k-1}^{\boldsymbol \xi}$'s are $[\xi_l,\xi_{l+k}]$, $l=0,\dots,n-k-1$, then the support of $B_{l,k}^{\boldsymbol \xi}$ is the joint support of $B_{l,k-1}^{\boldsymbol \xi}$ and $B_{l+1,k-1}^{\boldsymbol \xi}$, which is $[\xi_l,\xi_{l+1+k}]$,  $l=0,\dots,n-k$.

We conclude this section with a recursion formula for the derivatives of the $B$-splines that follows from \ref{eq:recspline}. 

\begin{proposition}
\label{prop:dersp}
For $i= 0,\dots, k$ and $l=0,\dots,n-k+1$:
\begin{multline}
\label{eq:recder}
\frac{d^iB_{l,k}^{\boldsymbol \xi}}{dx^i}(x)
=
\frac{i}{\xi_{l+k}-\xi_l}\frac{d^{i-1}B_{l,k-1}^{\boldsymbol \xi}}{dx^{i-1}}(x)+\frac{ x- {\xi_{l}}
  }{
  {\xi_{l+k}}-{\xi_{l}} 
  } 
\frac{d^{i}B_{l,k-1}^{\boldsymbol \xi}}{dx^{i}}(x)
+\\
+
\frac{i}{\xi_{l+1}-\xi_{l+k+1}}\frac{d^{i-1}B_{l+1,k-1}^{\boldsymbol \xi}}{dx^{i-1}}(x)+\frac{ {\xi_{l+k+1}-x}
  }{
  {\xi_{l+k+1}}-{\xi_{l+1}} 
  } 
\frac{d^{i}B_{l+1,k-1}^{\boldsymbol \xi}}{dx^{i}}(x).
\end{multline}
The support of ${d^iB_{l,k}^{\boldsymbol \xi}}/{dx^i}$ is $[\xi_{l},\xi_{l+k+1}]$ and
if $i=k$, then $d^{i}B_{l+1,k-1}^{\boldsymbol \xi}/dx^{i}\equiv 0$.
\end{proposition}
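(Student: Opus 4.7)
The strategy is a one-shot application of the general Leibniz product rule to the recurrence \ref{eq:recspline}. I would rewrite it as
$$
B_{l,k}^{\boldsymbol \xi} = f_1 \cdot B_{l,k-1}^{\boldsymbol \xi} + f_2 \cdot B_{l+1,k-1}^{\boldsymbol \xi},
$$
with $f_1(x) = (x-\xi_l)/(\xi_{l+k}-\xi_l)$ and $f_2(x) = (\xi_{l+k+1}-x)/(\xi_{l+k+1}-\xi_{l+1})$, then differentiate both sides $i$ times and apply $\frac{d^i}{dx^i}(fg) = \sum_{j=0}^{i}\binom{i}{j} f^{(j)}\, g^{(i-j)}$ to each of the two products.

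The key observation is that $f_1$ and $f_2$ are both affine, so $f_1^{(j)} = f_2^{(j)} \equiv 0$ for $j\ge 2$. Hence only the terms with $j=0$ and $j=1$ survive in each Leibniz expansion, giving exactly four summands. Using $\binom{i}{0}=1$, $\binom{i}{1}=i$, $f_1'=1/(\xi_{l+k}-\xi_l)$, and $f_2'=-1/(\xi_{l+k+1}-\xi_{l+1}) = 1/(\xi_{l+1}-\xi_{l+k+1})$, the four terms line up termwise with \ref{eq:recder}. The sign rewrite of the $f_2'$ coefficient is exactly what produces the unusual-looking denominator $\xi_{l+1}-\xi_{l+k+1}$ in the third summand of the claimed formula, so this is the only bookkeeping detail that deserves care.

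For the support statement, since differentiation cannot enlarge support, each $d^i B_{l,k}^{\boldsymbol \xi}/dx^i$ is supported in the support of $B_{l,k}^{\boldsymbol \xi}$, which by the recursion of supports recalled in the paragraph preceding the proposition equals $[\xi_l,\xi_{l+k+1}]$; the reverse containment follows because on each internal polynomial piece of $B_{l,k}^{\boldsymbol \xi}$ the restriction is a non-degenerate polynomial of degree $k$, so its derivatives of order $\le k$ are non-zero on a dense subset of that piece. Finally, the vanishing $d^{k}B_{l+1,k-1}^{\boldsymbol \xi}/dx^{k}\equiv 0$ for $i=k$ reflects that $B_{l+1,k-1}^{\boldsymbol \xi}$ is piecewise polynomial of degree $k-1$, whose $k$th pointwise derivative is zero on the interior of every polynomial piece. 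I anticipate no real obstacle; the whole proposition collapses to a single computation, and the only care required is in sign and index tracking.
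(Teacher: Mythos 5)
Your proof is correct and is essentially the paper's argument in closed form: the paper differentiates the recurrence \ref{eq:recspline} once to obtain the $i=1$ case and then appeals to ``a simple induction argument'' on $i$, which is exactly the inductive content of the Leibniz expansion you invoke, truncated at $j=1$ because the coefficient functions are affine. Your handling of the support claim and of the vanishing of $d^{k}B_{l+1,k-1}^{\boldsymbol \xi}/dx^{k}$ is somewhat more explicit than the paper's (which only notes the $k=1$ instance), but the substance is the same.
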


This section concludes with a result on the equivalence of the two approaches to the $B$-splines. Its proof can be found in  \ref{sec:pfsal}.
\begin{proposition}
\label{prop:equivalence}
Consider the $B$-splines $\widetilde{B}_{l,K}$, $l=0, \dots, n+K$, defined through recurrence \ref{eq:recspline} for the knots 
\begin{equation}
\label{eq:Nknots}
\widetilde{\boldsymbol \xi} =(\widetilde{\xi_{0}},\dots, \widetilde{\xi}_{K}, \widetilde{\xi}_{K+1},\dots, \widetilde{\xi}_{K+n},\widetilde{\xi}_{K+n+1},\dots,\widetilde{\xi}_{2K+n+1}),
\end{equation}
in place of $\boldsymbol \xi$, 
where $\widetilde{\xi}_{0}\le\dots \le \widetilde{\xi}_{K} = \xi_0$ and $\xi_{n}=\widetilde{\xi}_{K+n+1}\le \dots \le \widetilde{\xi}_{2K+n+1}$ are arbitrary superfluous external knots and the internal knots are, $\widetilde{\xi}_{K+i}=\xi_i$, $i=0,\dots,n$. 
Here, in the recurrence given in \ref{eq:recspline}, we assume a convention that whenever the denominator in any of the two terms is zero the entire term is assumed to be zero.  

Then  $\widetilde{B}_{K+i,K}$, $i=0,\dots, n-K$,  vanish outside of $[\xi_0,\xi_n]$ and on this interval $\widetilde{B}_{K+i,K}=B_{i,K}$,  $i=0,\dots, n-K$, where $B_{i,K}$'s are defined through \ref{eq:recspline} for $\boldsymbol \xi$.

Conversely, if $\widetilde{B}_{i,K}$, $i=0,\dots, n+K$, are the $B$-splines defined recursively for $\widetilde{\boldsymbol \xi}$ with $\widetilde{\xi}_{0}=\dots =\widetilde{\xi}_{K} = \xi_0$ and $\xi_{n}=\widetilde{\xi}_{K+n+1}= \dots = \widetilde{\xi}_{2K+n+1}$ and  $\widetilde{B}^h_{i,K}$ are the analogous ones but for  $\widetilde{\boldsymbol \xi}^h$ with $\widetilde{\xi}_{i}^h=-(K-i)h+\xi_0$ and $\xi_{n}=\widetilde{\xi}_{K+i+n+1}=\xi_{n}+ih$, $i=0,\dots, K$. Then for $i=0,\dots, n-K+1$:
\begin{align*}
\lim_{h\rightarrow 0} \widetilde{B}^h_{i,K}(x)&=\widetilde{B}_{i,K}(x),~~x\in (\xi_0,\xi_n),\\
\lim_{h\rightarrow 0} \|\widetilde{B}^h_{i,K}-\widetilde{B}_{i,K}\|&=0.
\end{align*}
where $\|\cdot\|$ is the $L_2$ norm of the square integrable functions restricted to $[\xi_0-\epsilon,\xi_n+\epsilon]$, for some $\epsilon>0$ while functions outside their support are vanishing.  
 \end{proposition}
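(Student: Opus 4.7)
Both parts of the proposition are natural inductions on the spline order $k$ running from $0$ up to $K$, with the recurrence \ref{eq:recspline} as the workhorse.

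For the first claim I would start with the base case $k=0$, which is immediate from the indicator-function definition \ref{eq:indi}: under the identification $\widetilde{\xi}_{K+j}=\xi_j$, the indicators $\widetilde{B}_{K+i,0}=\mathbb{I}_{(\widetilde{\xi}_{K+i},\widetilde{\xi}_{K+i+1}]}$ coincide with $\mathbb{I}_{(\xi_i,\xi_{i+1}]}=B_{i,0}^{\boldsymbol{\xi}}$. For the inductive step $k-1\mapsto k$, I would substitute the identification of internal knots into the four knot coefficients appearing in \ref{eq:recspline} for $\widetilde{B}_{K+i,k}$. Each of the four knots involved is internal (lies among $\xi_0,\dots,\xi_n$) as long as $i$ stays in the prescribed range, so the recurrence for $\widetilde{B}_{K+i,k}$ matches term by term the one for $B_{i,k}^{\boldsymbol{\xi}}$; applying the inductive hypothesis to $\widetilde{B}_{K+i,k-1}$ and $\widetilde{B}_{K+i+1,k-1}$ closes the induction. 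The claimed vanishing outside $[\xi_0,\xi_n]$ then follows from the support formula $[\widetilde{\xi}_l,\widetilde{\xi}_{l+k+1}]$ recorded just before Proposition \ref{prop:dersp}.

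For the limit statement I would run the same induction, this time proving pointwise convergence on $(\xi_0,\xi_n)$ and $L_2$ convergence on $[\xi_0-\varepsilon,\xi_n+\varepsilon]$ simultaneously. The base case $k=0$ reduces to indicator functions whose knots possibly merge: those whose supports do not collapse converge to the indicator on the limiting interval, while those whose supports shrink to a point vanish both almost everywhere and in $L_2$. In the inductive step each coefficient $(x-\widetilde{\xi}_j^h)/(\widetilde{\xi}_{j+k}^h-\widetilde{\xi}_j^h)$ either converges to a finite limit, in which case the corresponding term converges by the inductive hypothesis, or the limiting denominator vanishes; in the latter case one verifies that the factor $\widetilde{B}_{j,k-1}^h$ has support of length $O(h)$ and is bounded by $1$, so the potentially singular ratio stays uniformly bounded and its support shrinks, yielding a zero limit almost everywhere and in $L_2$ by dominated convergence on the fixed compact domain, consistent with the stated zero-over-zero convention.

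The main obstacle will be the careful handling of those degenerate ratios in the recurrence: when the denominator of one of the two terms in \ref{eq:recspline} tends to zero, the $B$-spline factor in the numerator must simultaneously be shown to have support shrinking at a matching rate, so that the product has a well-defined zero limit. A secondary bookkeeping issue is matching indices between the stacked-knot family $\widetilde{B}_{i,K}$ and the $h$-perturbed family $\widetilde{B}_{i,K}^h$, and checking that the convergence is uniformly dominated by $1$ on $[\xi_0-\varepsilon,\xi_n+\varepsilon]$, which upgrades pointwise convergence to the claimed $L_2$ convergence.
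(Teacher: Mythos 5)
Your proposal follows the same route as the paper's proof in \ref{sec:pfsal}: both parts are inductions on the spline order driven by the recurrence \ref{eq:recspline}, the first part being essentially immediate (the paper also only sketches it), and the second part hinging on isolating, at each order, the terms whose denominator $\widetilde{\xi}^h_{l+k}-\widetilde{\xi}^h_l$ vanishes as $h\to 0$ and showing that each such term is a product that stays uniformly bounded while being supported on an interval of length $O(h)$, so that it disappears both pointwise and in $L_2$, consistently with the zero-over-zero convention. The one place where you genuinely diverge is the justification of the uniform bound: the paper strengthens its induction hypothesis with an explicit boundedness property and establishes it for the degenerate term $\frac{x+Kh-\xi_0}{Kh}\,\widetilde{B}^h_{0,K-1}(x)$ by a scale-invariance comparison with $B$-splines on a fully equidistant grid, exploiting positivity of the two summands in the recurrence; you instead observe that the offending ratio, restricted to the support $[\widetilde{\xi}^h_l,\widetilde{\xi}^h_{l+k}]$ of the factor it multiplies (an interval whose length is exactly the vanishing denominator), lies in $[0,1]$, and that the $B$-spline factor is itself bounded by $1$. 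Your mechanism is more direct, but the bound $\widetilde{B}^h_{l,k-1}\le 1$ is nowhere established in the paper and should be stated explicitly (it follows from nonnegativity together with the partition-of-unity identity for distinct knots, or alternatively one can propagate a cruder $h$-independent bound of the form $2^{k}M_0$ through the induction using the same coefficient-in-$[0,1]$ observation); with that in place the argument closes, and the $L_2$ statement follows by dominated convergence on the fixed compact domain exactly as both you and the paper argue.
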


 \section{Simple  $B$-spline orthogonalizations}
 \label{sec:simpleorth}
The popularity of the $B$-splines is, to a great extent, due to small sizes of the basis element supports, which are also disjoint for different elements except the `nearest' neighbors. 
The zero order $B$-splines (piecewise constant functions) have mutually disjoint supports which makes them orthogonal.
Any first order $B$-spline is built over two neighboring intervals defined by knots and its support is not disjoint only to one neighbor on the right and one on the left.
Similarly, the second order $B$-splines are built around a between knot interval and having overlapping support only to the two predecessors and the two successors. 
This extends to any order, i.e. the $k$th order splines  are built around a between knot interval and their support overlaps with the $k$ predecessors and the $k$  successors. 
This property is clearly seen in \ref{fig:TwoApp}~{\it (right)}.

In analysis of functional data, it is convenient to work with the orthonormal bases such as Fourier bases and similar. 
However, the $B$-splines are not orthogonal and obtaining an orthonormal basis of splines sharing to some extent the favorable properties of the $B$-splines is of interest.  
The most direct approach to obtaining such a base is through orthonormalization of the $B$-splines that has been discussed in the literature, see \cite{Mason} and \cite{Redd} and an $R$-package for the purpose has been developed, \href{https://CRAN.R-project.org/package=orthogonalsplinebasis}{\it Orthogonal B-Spline Basis Functions}. 
In \cite{BogdanL}, the orthogonalization of periodic splines has been solved in  an analytical form, while the case of numerically efficient orthonormalization was discussed in \cite{Redd} and utilized in R-package {\tt orthogonalsplinebasis}.
A method of constructing orthogonal splines that has small total support has been proposed through the so-called symmetric $O$-splines in \cite{Mason}.
Below we discuss these previous approaches. 
\subsection{One-sided orthogonalization -- the Gram-Schmidt method}
\label{subsec:GSCH}
This is simply the Gram-Schmidt  (GS) orthogonalization of the $B$ splines if one starts with either the furthest left or the furthest right $B$-spline  (the order is dictated by the order of the support intervals of the $B$-splines) and then progressively orthogonalize subsequent $B$-splines toward the other end.
The two versions of this orthogonalization can be referred to as the right-to-left GS and the left-to-right GS and they are implemented in the software packages such as {\tt orthogonalsplinebasis} as well as in our proposed package {\tt splinets} that accompanies this work (see the function {\tt grscho}). 
A computationally efficient approach to one-sided normalization is given in \cite{Qin} and was applied to splines in \cite{Redd}.
Our computational approach implemented in {\tt splinets} is utilized to obtain the graphs in \ref{fig:simporth}.

For the sake of completeness we present generic algorithm for the GS method defined for an arbitrary sequence of linearly independent elements $h_n\in \mathcal H$, $n\in \mathbb N_0$, $\mathcal H$ is some Hilbert space.
Let $h_i$, $i=0,\dots,n$, be represented as a sequence of numerical vectors $\mathbf a_i$, $i=0,\dots,n$,  in a certain basis (not necessarily orthonormal and its form is irrelevant for the procedure). 
Consider Gram matrix  $\mathbf H=\left [\langle h_i,h_j\rangle\right]_{i,j=0}^n$.
Define $\mathbf b_0=\mathbf a_0/\|h_0\|=\mathbf a_0/\sqrt{h_{00}}$, which is the representation of the first vector of the GS orthonormal basis. 
The second vector can be obtained by defining first
\begin{subequations}
\begin{align*}
\tilde{\mathbf a}_i&=\mathbf a_i-\frac{h_{i0}}{h_{00}}\mathbf a_0,~ i=1,\dots, n\\
\tilde{\mathbf H}&=\left[ h_{ij} -\frac{h_{i0}h_{0j}}{h_{00}}\right ]_{i,j=1}^n
\end{align*}
\end{subequations}
and then taking $\mathbf b_1=\tilde{\mathbf a}_1/\sqrt{\tilde{h}_{11}}$.
For $n=1$:
\begin{subequations}
\begin{align*}
\mathbf b_1&=\frac{\tilde{\mathbf a}_1}{\sqrt{{h}_{11}-\frac{h_{10}h_{01}}{h_{00}}}}=\frac{\mathbf a_1-\frac{h_{10}}{h_{00}}\mathbf a_0}{\sqrt{{h}_{11}-\frac{h_{10}h_{01}}{h_{00}}}}
\end{align*}
\end{subequations}
Moreover, to find the next vector $\mathbf b_2$, one applies the same procedure (but by one dimension smaller) to $\tilde{\mathbf a}$ and $\tilde{\mathbf H}$ in place of $\mathbf a$ and $\mathbf H$. 
This approach is implemented in our package {\tt splinets}, function {\tt grscho}, see \ref{alg:igso}.
Alternatively, one can use implemented programs for the Cholesky decomposition such as {\tt } or  {\tt gramSchmidt} in the package {\tt pracma v1.9.9}. 
\begin{algorithm}[tbh]
\caption{Gram-Schmidt orthonormalization (in R-language)}
\begin{lstlisting}[language=R]
#INPUT: `A' - columnwise matrix representation of the input vectors; `H' - Gram matrix of the vectors; 
#OUTPUT: `B' - columnwise matrix representation of the GS orthonormalization of the vectors represented by `A';

grscho=function(A,H)
{
  nb=dim(H)[1]
  B=A   # to be output with the orthonormalized columns of A
  B[,1]=A[,1]/sqrt(H[1,1]) #1st normalized output vector
  for(i in 1:(nb-1)){
    A[,(i+1):nb]=A[,(i+1):nb]-
    		(A[,i]%*%H[i,(i+1):nb,drop=F])/H[i,i] 
    #`drop=F' keeps matrix form of a row vector
    H[(i+1):nb,(i+1):nb]=H[(i+1):nb,(i+1):nb]-
    	t(H[i,(i+1):nb,drop=F])%*%H[i,(i+1):nb,drop=F]/H[i,i]
    B[,i+1]=A[,i+1]/sqrt(H[i+1,i+1])
  }
  return(B)
}
\end{lstlisting}
\label{alg:igso}
\end{algorithm}


\subsection{Two-sided orthogonalization -- a symetrized GS method}
\label{subsec:symGS}
One of the disadvantages of the one-sided orthogonalization is that the supports of the elements of the basis are growing large at each step of the orthogonalization.  
Moreover, the obtained elements are asymmetric even for the equally spaced knots. 
For example, the resulting $O$-splines on the two opposite sides of the interval have different sizes of their supports, the ones from the side where the orthogonalization started have small support while the ones on the side where the orthogonalization concludes are reaching with their support the entire interval.
In \cite{Redd}, it was suggested how one can modify the one-sided method to obtain the two-sided $O$-splines to improve the method in this respect.
We observe that the two-sided $O$-splines utilize the two one-sided orthogonalizations when properly modified at the center.
The result has two advantages compared to the one-sided one, firstly, it produces smaller total support of the obtained $O$-splines, secondly, it has the natural symmetry around the center. 
Since the actual details of the approach have not been presented, we give a short account of it. 
Moreover, the presented symmetrization is also used in our construction of the splinets. 
In a sense, the two-sided orthogonalization is a crude prototype of our approach to the orthogonalization. 
In the splinet, the size of the support of the $O$-splines is further reduced and the symmetry property becomes also a localization property within a net of splines.

\begin{figure}[t!]
  \centering
 \includegraphics[width=0.51\textwidth]{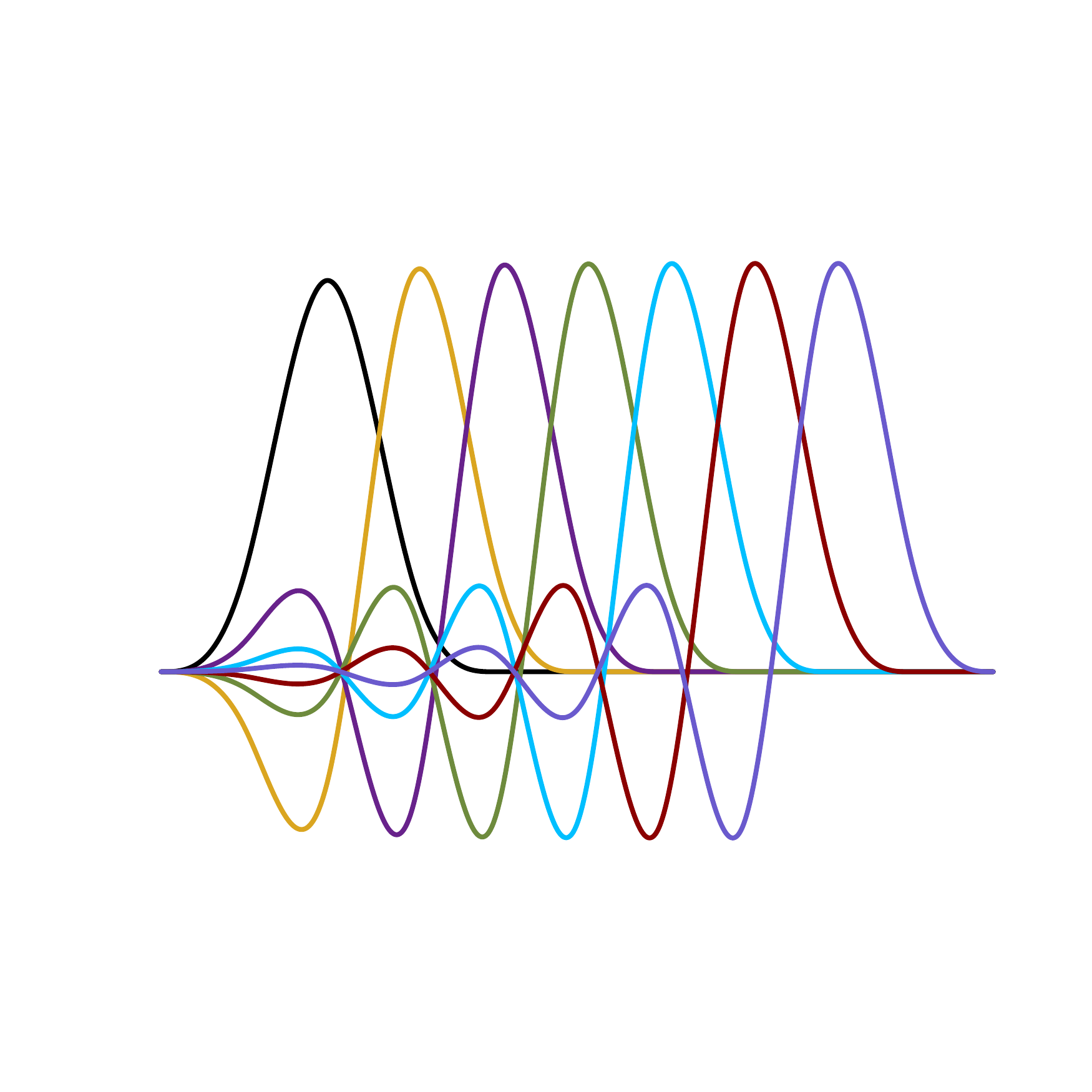}\includegraphics[width=0.49\textwidth]{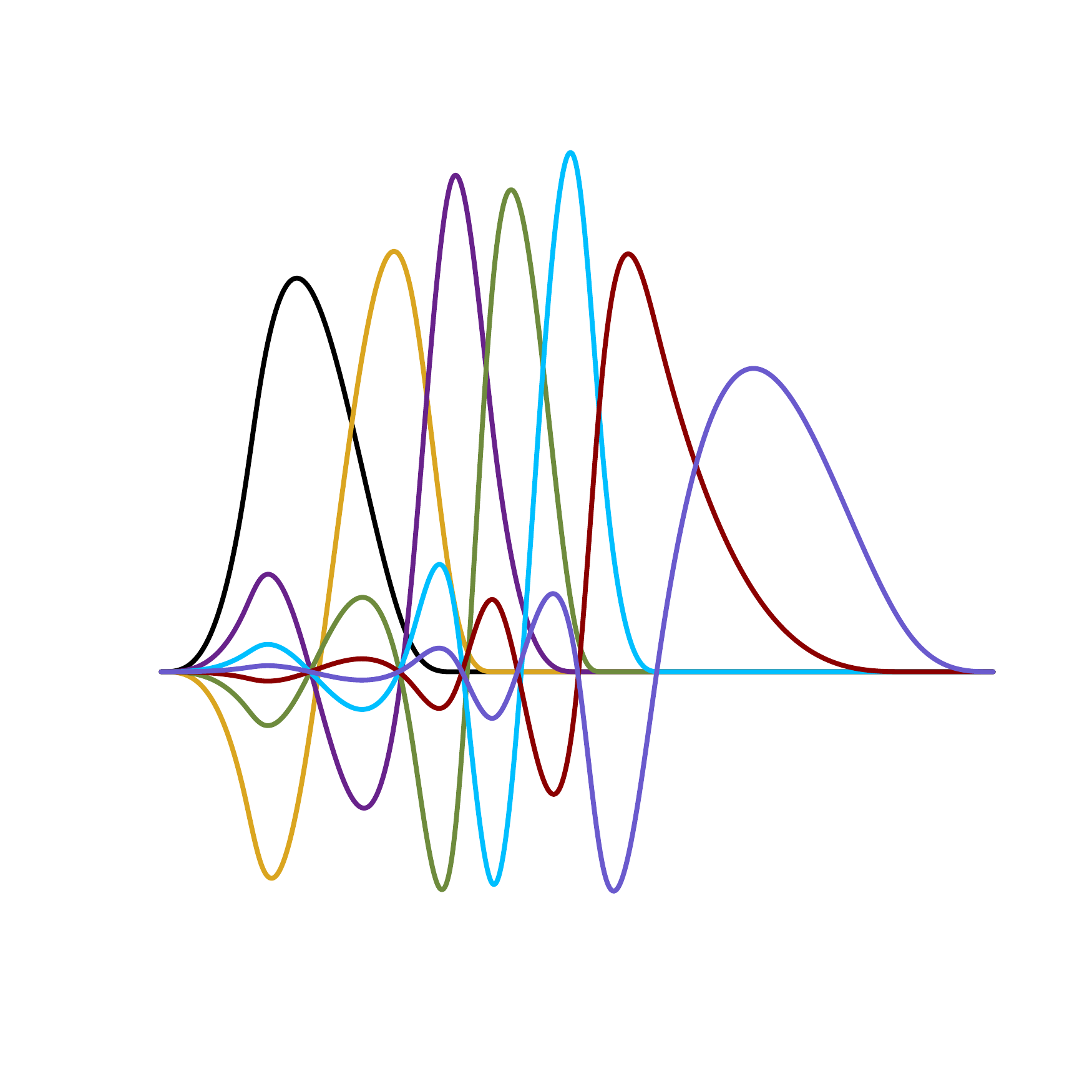}\vspace{-2.3cm}\\
\includegraphics[width=0.51\textwidth]{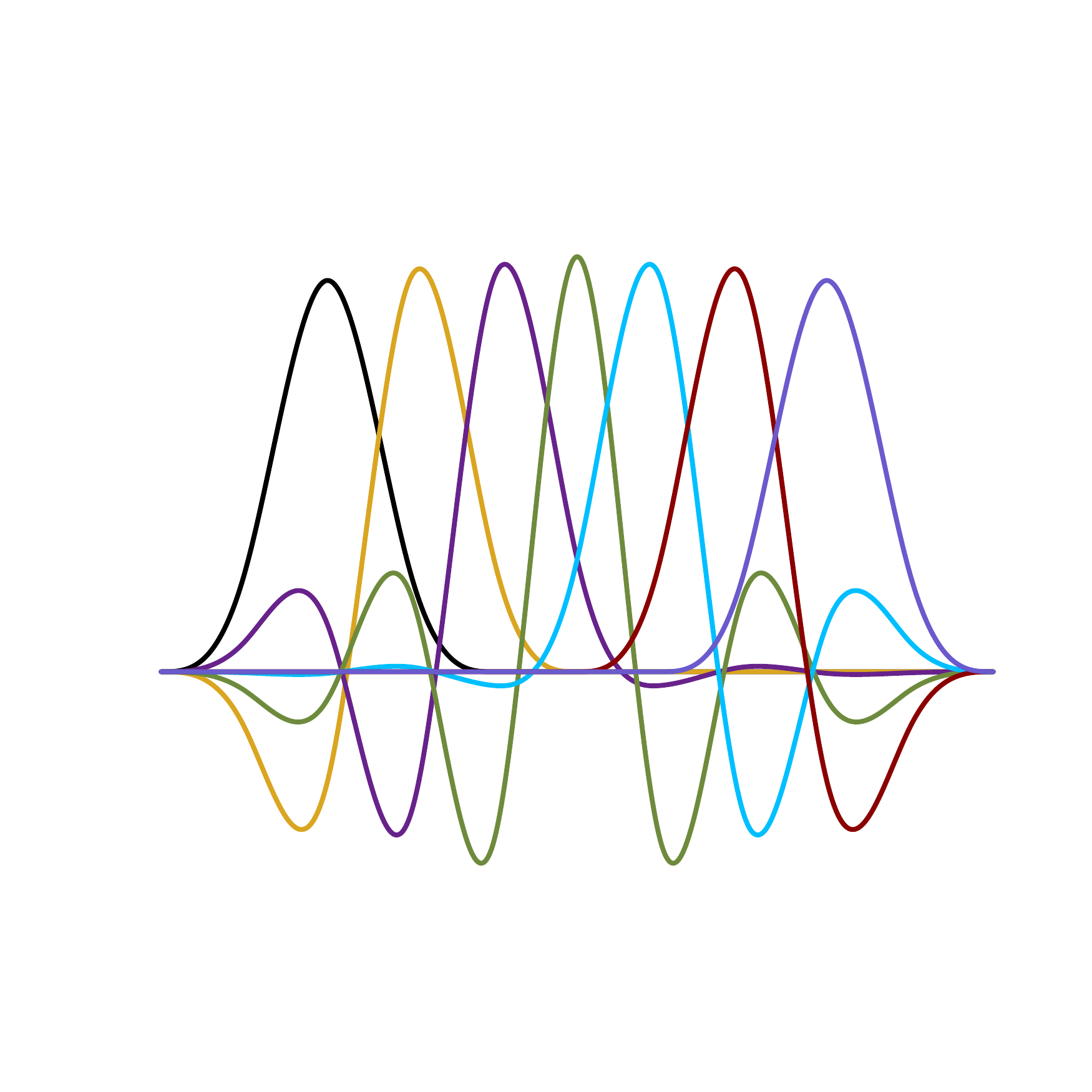}\includegraphics[width=0.49\textwidth]{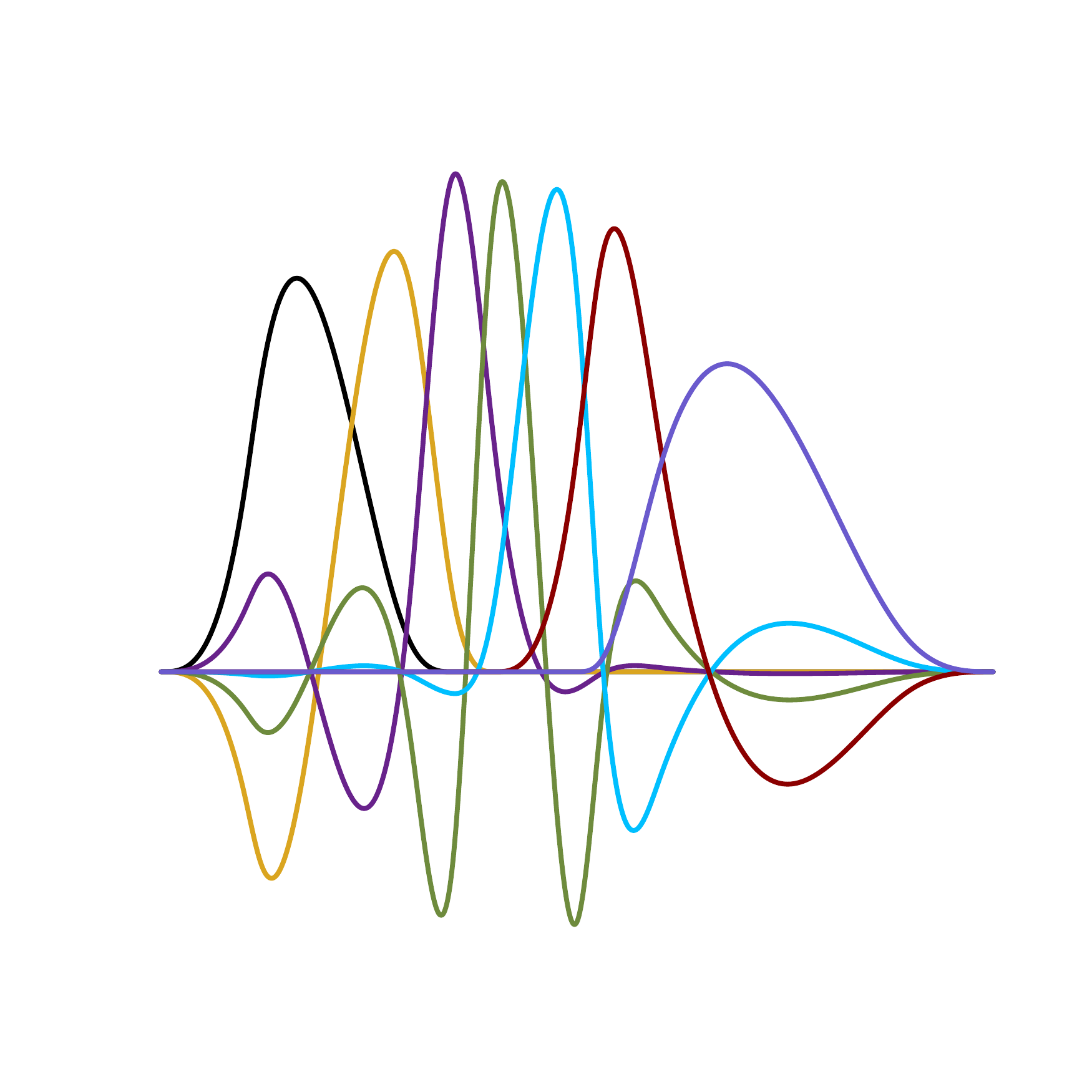}\vspace{-1cm}
  \caption{The third order $O$-splines. {\it Top}: one-sided left-to-right, ten knots;  {\it Bottom}: two-sided, eleven knots;  in the left column equally spaced knots, in the right column irregularly spaced knots. The graphs obtained using {\tt splinets} package. }
  \label{fig:simporth}
\end{figure}
 
The two-sided method we describe here is similar to the ones suggested in the past, although the central splines may slightly differ. 
It is based on a symmetrized version of the GS method formulated in the generic Hilbert space setup.
In the supplementary material, we present derivations, while here the rationale behind the approach is discussed in a less formal manner. 
The whole idea is based on the following. \vspace{2mm}
\begin{description}
\item{{\sc Step 1:}} Choose a central point inside the interval with respect to which the sides of the orthonormalization will be performed.
\item{{\sc Step 2:}} Perform the {\it left-to-right} GS orthogonalization over the knots located on the left-hand side (LHS) of the chosen center and the {\it right-to-left} GS orthogonalization over the knots located on the right-hand side (RHS) of it as long as the LHS ones are residing on the supports that are disjoint with the RHS ones.
This orthogonalization will lead to the jointly orthogonal splines.  
\item{{\sc Step 3:}} The above step leaves a central group of the $B$-splines not orthogonalized yet. 
The group is made of splines for which supports contain the central point and their number depends on the considered order of the splines.
These will be orthogonalized using a modification of GS method that will preserve natural symmetry of the splines around the central point.  
\end{description}
 
\subsubsection*{Selection of the central point}
It is quite clear that the choice of the central point affects the total support size of the resulting two-sided orthogonalization. 
For equally spaced knots, the mid-point of the entire range should be considered to minimize the total support. 
However, if the knots are not evenly spread it is less clear where it would be optimal to place the central point. 

Let us consider a vector of knots $\boldsymbol \xi$ with an internal  knot $\xi_{k_0}$.
When deciding for the choice of  the central point near $\xi_{k_0}$ or just at it, one can aim for the minimal total support of the resulting $O$-splines. 
For simplicity, let us assume that we deal with the first order splines although the argument extends to an arbitrary order. 
We also take  $\xi_0=0$ and $\xi_{m+1}=1$. 
The total support of the $B$-splines of the first order is 
\begin{subequations}
\begin{align*}
\xi_2+\xi_3-\xi_1+\dots + \xi_{m+1}-\xi_{m-1} &=\sum_{k=2}^{m+1} \xi_k - \sum_{k=0}^{m-1} \xi_k \\
&=\xi_{m+1}+\xi_m - \xi_1=2-\left[(1-\xi_{m}) + \xi_1 \right] \approx 2, 
\end{align*}
\end{subequations}
where approximation holds for a sufficiently fine grid of knots. 
The one-sided orthogonalization of the $B$-splines yields the following support
$$
\xi_2+ \xi_3+\dots + \xi_{m+1}=\sum_{k=2}^{m+1}\xi_k=2-\left[(1-\xi_{m}) + \xi_1 \right]+\sum_{k=1}^{m-1~} \xi_k,
$$
which, in general, is larger than the total support for the $B$ splines and generally increase without bound when the knot grid is infinitesimally refined. 
For the two-sided orthogonalization around the central point at $\xi_{k_0}$ we have for the total support size
\begin{subequations}
\begin{align*}
1+\sum_{k=2}^{k_0}\xi_{k}+ \sum_{k=k_0}^{m-1} (1-\xi_{k})
&=2+\sum_{k=2}^{k_0-1}\xi_{k} + \sum_{k=k_0+1}^{m-1} (1-\xi_{k})\\
&=2-\left[(1-\xi_{m}) + \xi_1 \right]+\sum_{k=1}^{k_0-1}\xi_{k} + \sum_{k=k_0+1}^{m} (1-\xi_{k}),
\end{align*}
\end{subequations}
where we take into the account that the central $O$-spline with the central point $\xi_{k_0}$ spreads over the entire range and thus the size of its support is equal to one. 
We observe that the minimal $k_0$ such that $\xi_{k_0}$ is larger than $1-\xi_{k_0+1}$ typically results in the optimal or nearly optimal choice. 
Thus one can simply take $k_0$ such that $\xi_{k_0}$ is the closest to 0.5 among all $\xi_k$'s. 
We conclude that for the two-sided orthogonalization the best is to take the central point around the midpoint of the total range. 
\subsubsection*{Symmetry with respect to the central point}
Once a choice of the central point, say $\xi$, is determined one can consider two one-sided orthogonalization around this point, the left-to-right orthogonalization for all $B$-splines with the support contained in $[\xi_0,\xi]$ and the right-to-left orthogonalization for those $B$-splines that have the support contained in $[\xi,\xi_{m+1}]$. The resulting two sets of $O$-splines are jointly orthogonal due to disjoint supports.
In \ref{fig:simporth} {\it (bottom)}, the first two $O$-splines from each side correspond to this portion of the orthogonalization.  
In the case of evenly spaced knots this leads to the symmetric $O$-splines around the mid-point as seen in \ref{fig:simporth} {\it (bottom-left)}.
\subsubsection*{Symmetric orthogonalization}
There are several $B$-splines having the support containing the central point $\xi$. 
We aim at orthogonalization that for the equispaced knots preserve the symmetry around $\xi$. 
In general, there may be even or odd number of such $B$-splines and there can be several approaches to their orthogonalization that preserves their symmetry. 
Our approach is based on the following generic result, for the proof see \ref{sec:pfsal}, and for geometric interpretation see \ref{fig:SymOrth}.
\begin{figure}[thb]
  \centering
 \includegraphics[width=0.99\textwidth]{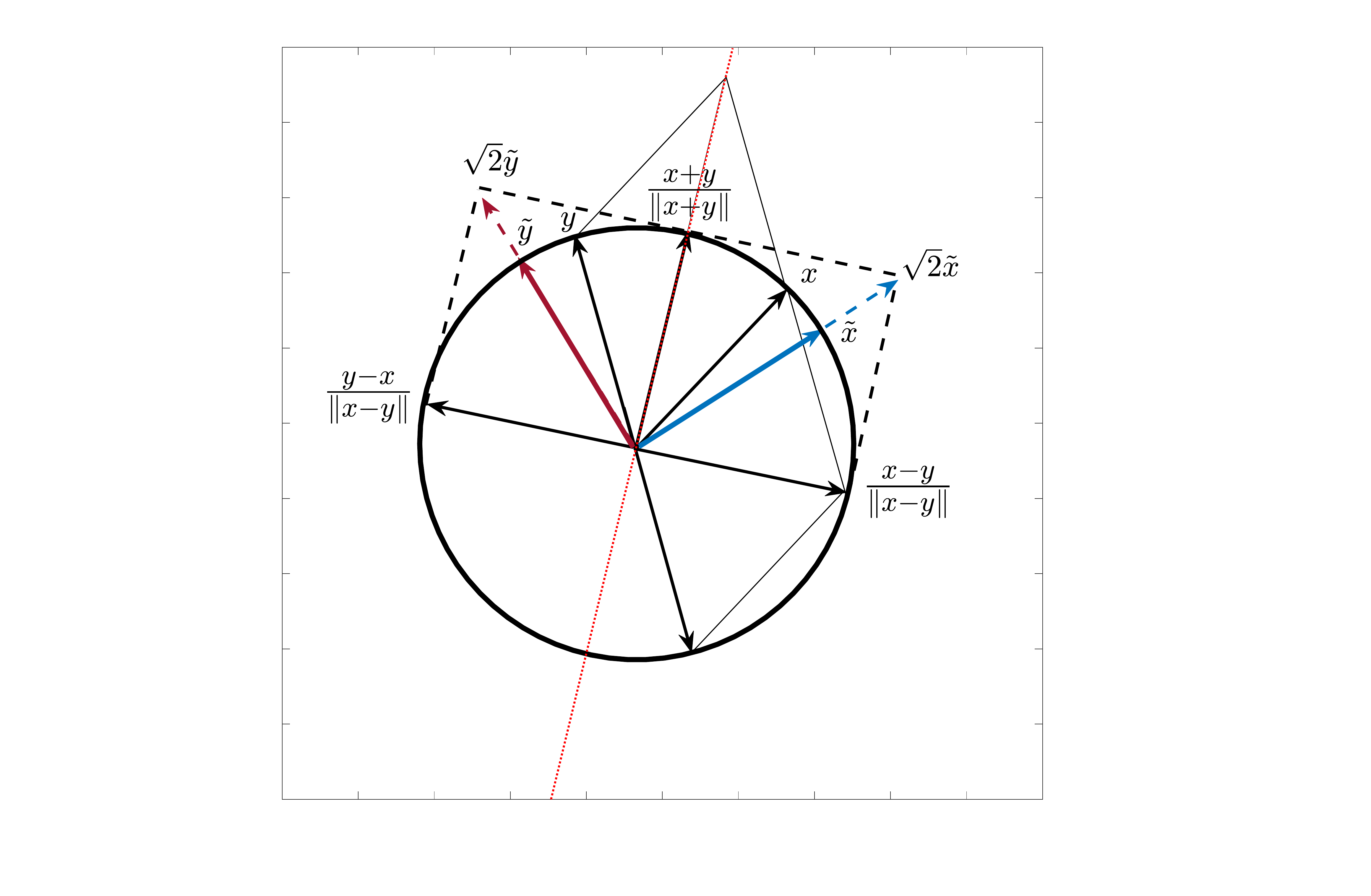}\vspace{-3mm}
  \caption{Geometric interpretation of \ref{prop:reflect}. The symmetric orthogonalization with respect to the reflection to the axis marked by the dotted line.  }
  \label{fig:SymOrth}
\end{figure}

\begin{proposition}
\label{prop:reflect}
Let $S$ be a symmetry operator on a certain linear vector space, i.e. a linear operator for which $S^2=I$. Let $x$ and $y$ be two linearly independent and normalized vectors  such that $y=Sx$. 
Define
\begin{subequations}
\begin{align*}
 \tilde x& = \frac{1}{\sqrt{2}} \left(\frac{x+y}{\|x+y\|}+  \frac{x-y}{\|x-y\|}\right)\\
 \tilde y& = \frac{1}{\sqrt{2}} \left (\frac{x+y}{\|x+y\|}-  \frac{x-y}{\|x-y\|}\right)
\end{align*}
\end{subequations}
Then vectors $\tilde x$ and $\tilde y$ are orthonormal and symmetric, i.e. $\langle \tilde x, \tilde y \rangle=0$ and $S\tilde x = \tilde y$.
\end{proposition}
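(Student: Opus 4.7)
My plan is to reduce the statement to a short calculation built around the eigenstructure of $S$. Set
$$ u = \frac{x+y}{\|x+y\|}, \qquad v = \frac{x-y}{\|x-y\|}, $$
which are well defined because the linear independence of $x$ and $y$ guarantees $x+y\neq 0$ and $x-y\neq 0$. Since $y=Sx$ and $S^2=I$, one has $Sy=x$, so $S(x+y)=x+y$ and $S(x-y)=-(x-y)$. Hence $u$ is a unit eigenvector of $S$ for the eigenvalue $+1$ and $v$ is a unit eigenvector for the eigenvalue $-1$. In this notation the proposition reduces to showing that $\tilde x=(u+v)/\sqrt{2}$ and $\tilde y=(u-v)/\sqrt{2}$ are orthonormal and that $S\tilde x=\tilde y$.

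The first step is to check that $u\perp v$. Working in the real Hilbert space setting natural to the $L^2$-spline context, one computes
$$ \langle x+y,\, x-y\rangle = \|x\|^2 - \|y\|^2 = 0, $$
using only that $x$ and $y$ are both normalized. Therefore $\langle u,v\rangle=0$ after dividing by the positive norms. This is the one spot where the hypothesis $\|x\|=\|y\|=1$ is really consumed; the rest of the argument is bookkeeping.

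Given that $\{u,v\}$ is orthonormal, the norms and inner product of $\tilde x$ and $\tilde y$ follow from the parallelogram-type identities: $\|\tilde x\|^2=\tfrac12(\|u\|^2+\|v\|^2)+\langle u,v\rangle=1$, and similarly $\|\tilde y\|^2=1$, while $\langle \tilde x,\tilde y\rangle=\tfrac12(\|u\|^2-\|v\|^2)=0$. Finally, applying $S$ termwise and using $Su=u$, $Sv=-v$ gives $S\tilde x=(u-v)/\sqrt{2}=\tilde y$, completing the proof.

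The main obstacle, such as it is, is conceptual rather than technical: one must recognize that $u$ and $v$ are automatically orthogonal eigenvectors of $S$ for distinct eigenvalues, and that this orthogonality is furnished not by any inner-product-preserving assumption on $S$ but simply by the equality of norms $\|x\|=\|y\|$. Once this is noticed, the symmetric pair $(\tilde x,\tilde y)$ is just the $\pm45^\circ$ rotation of the orthonormal basis $(u,v)$ in the two-dimensional subspace $\mathrm{span}\{x,y\}$, and the geometric picture in \ref{fig:SymOrth} makes the orthonormality and the identity $S\tilde x=\tilde y$ essentially self-evident.
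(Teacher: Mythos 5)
Your proof is correct. It does, however, organize the computation differently from the paper. You first establish that $u=(x+y)/\|x+y\|$ and $v=(x-y)/\|x-y\|$ form an orthonormal pair of eigenvectors of $S$ (eigenvalues $+1$ and $-1$), the orthogonality $\langle x+y,x-y\rangle=\|x\|^2-\|y\|^2=0$ being the single place where the equality of norms is used; everything else is then the trivial observation that $(\tilde x,\tilde y)$ is a $45^\circ$ rotation of $(u,v)$. The paper never verifies $u\perp v$ at all: for $\langle\tilde x,\tilde y\rangle=0$ it expands directly and lets the cross terms $\pm\langle x+y,x-y\rangle/(\|x+y\|\,\|x-y\|)$ cancel identically (so orthogonality there holds without any hypothesis on the norms), and for $\|\tilde x\|=1$ it expands in terms of $x$, $y$, $\langle x,y\rangle$ and invokes $\|x\pm y\|^2=2(1\pm\langle x,y\rangle)$. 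Your route buys a cleaner conceptual picture — it explains \emph{why} the construction works and matches the geometric figure — and it localizes exactly where normalization is consumed; the paper's route is a slightly longer but self-contained expansion that avoids introducing the auxiliary pair $(u,v)$. One minor caveat: your step $\langle x+y,x-y\rangle=\|x\|^2-\|y\|^2$ uses symmetry of the inner product, so you are implicitly in a real Hilbert space; that is the setting of the paper (and its own proof makes the same tacit assumption), so this is not a gap, but it is worth stating.
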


Implementation of this proposition is shown in \ref{alg:iso}, where we use that for the normalized input  $\|x\pm y\|^2=2(1\pm\langle x,y\rangle)$. 
\begin{algorithm}[thb]
\caption{Symmetrized orthonormalization (in R-language)}
\label{alg:iso}
\begin{lstlisting}[language=R]
#INPUT: vectors as two columns in `x', the vectors are assumed to be NORMALIZED with respect to a certain inner product, their inner product is in `h';
#OUTPUT: symmetrically orthonormalized two vector as two columns in `res'; 
#The inner product and norm DO NOT NEED to be Euclidean, i.e. not 
#necessarily `sum(x[,1])^2=sum(x[,2])^2=1' and `sum(x[,1]*x[,2])=h'
 symo=function(x,h){
# x: 2 columns matrix contains coefficients w.r.t. the basis
# h: the inner product of two vectors w.r.t. the basis
  a1=(1/sqrt(1+h)+1/sqrt(1-h))/2
  a2=(1/sqrt(1+h)-1/sqrt(1-h))/2
  res=x
  res[,1] = a1*x[,1]+a2*x[,2]
  res[,2] = a2*x[,1]+a1*x[,2]
  return(res)
}
\end{lstlisting}
\end{algorithm}
\begin{remark}
\label{rem:lowdin}
The orthogonalization in \ref{prop:reflect} is  equivalent to the L\"owdin symmetric orthogonalization for the case of two vectors,  see \cite{lowdin}, since \begin{subequations}
\begin{align*}
 \tilde x& =  \left(\frac{1}{\sqrt {1+ \langle x, y \rangle}} + \frac{1}{\sqrt {1- \langle x, y \rangle}}\right) \frac{x}2 +
 \left(\frac{1}{\sqrt {1+ \langle x, y \rangle}} - \frac{1}{\sqrt {1- \langle x, y \rangle}}\right)\frac y 2, \\
 \tilde y& =\left (\frac{1}{\sqrt {1+ \langle x, y \rangle}} - \frac{1}{\sqrt {1- \langle x, y \rangle}}\right) \frac x 2+ \left(\frac{1}{\sqrt {1+ \langle x, y \rangle}} + \frac{1}{\sqrt {1- \langle x, y \rangle}}\right) \frac y 2.
\end{align*}
\end{subequations}
\end{remark}

\sloppy Once the symmetric orthogonalization of two vectors is defined, the central splines, say $s_i$, $i=1,\dots, r$, are handled as follows. 
The first pair $(s_1,s_r)$ is first orthogonalized with respect to the LHS and RHS ones obtained in Step 2, leading to, say, $(x_1,x_r)$.
This pair, in turn, is orthogonalized using the above symmetric orthogonalization and then added to the LHS and RHS $O$-splines. 
This is repeated with every $(s_i,s_{r-i+1})$ until either no $B$-spline is left ($r$ is even) or there is one central $B$-spline left. 
In the latter case, we simply orthogonalize it with respect to all the previous $O$-splines. 
In \ref{fig:simporth}~{\it (bottom)}, we see the resulting central $O$-splines with their support extending over the entire $[\xi_0,\xi_{m+1}]$. 
In the LHS graph, the $O$-splines are symmetric around the central point. 
\begin{remark}
\label{rem:SymOp}
A natural symmetry for splines on the equally spaced knots inside the interval is the mirror reflection, say $S$, with respect  to the midpoint of the interval. We note that $S^2=I$ and $S^*=S$. 
This also extends to the case of non-equally spaced knots when they are placed symmetrically around the midpoint. 
\end{remark}

\subsubsection*{Symmetrized Gram-Schmidt orthogonalization}
The goal here is to present an analog to the Gram-Schmidt orthogonalization that yields a symmetric set of orthogonal vectors while starting from a set of linearly independent vectors that forms a set of pairs   $\{(x_i,y_i)\}_{i=1}^n$ that are symmetric with respect to a symmetry operator $S$, i.e. $Sx_{i}=y_{i}$.
For the construction to preserve the symmetry, additionally to $S^2=I$,  it is required that $S$ is self-adjoint, i.e. $S^*=S$.

This additional condition, self-adjointness, is only satisfied for the case of equally spaced knots thus the following orthogonalization results in symmetric pairs of splines only in that case. 
However, the orthonormalization procedure itself can be performed in any case leading to visually balanced $O$-splines as seen in  \ref{fig:simporth} {\it (bottom-right)}.
The symmetry property does not seem directly interpretable when the knots are not equally spaced unless they are spaced symmetrically around the midpoint. 

Similarly to the regular GS method and the symmetric orthogonalization presented above, the construction is generic, in the sense that it does not require the vectors to be splines. 
More precisely, the outcome of the procedure is a set of orthogonal vectors in the form of pairs $(\tilde x_i,\tilde y_i)$, $i=1,\dots ,n$ for which, in the case of self-adjoint $S$,  $\tilde y_i=S\tilde x_i$.
The process formalizes symmetrization and orthogonalization of vectors as described above. 
In its description, for $k=1,\dots,n$, $P_k$ stands for the orthogonal projection to the linear span of $\{(x_i,y_i)\}_{i=1}^k$.
 
For a set of pairs $\mathcal S=\{(x_i,y_i)\}_{i=1}^n$,
take the first pair $(x_1,y_1)$ and construct the orthogonal and symmetric pair $(\tilde x_1,\tilde y_1)$ as it follows from \ref{prop:reflect}.
Replace all $\{(x_i,y_i)\}_{i=2}^n$ by $ \{(x_i-P_1x_i,y_i-P_1y_i)\}_{i=2}^n$ and set $\mathcal S$ to this new set.
We observe that the symmetry is preserved 
\begin{align*}
S(x_i-P_1x_i)&=y_i-SP_1x_i=y_i-\frac{\langle x_i,\tilde x_1\rangle}{\|\tilde x_1\|^2} S\tilde x_1-\frac{\langle x_i,\tilde y_1\rangle}{\|\tilde y_1\|^2} S\tilde y_1\\
&=y_i-\frac{\langle x_i,S\tilde y_1\rangle}{\|S\tilde y_1\|^2} \tilde y_1-\frac{\langle x_i,S\tilde x_1\rangle}{\|S\tilde x_1\|^2} \tilde x_1\\
&=y_i-\frac{\langle Sx_i,\tilde y_1\rangle}{\|\tilde y_1\|^2} \tilde y_1-\frac{\langle Sx_i,\tilde x_1\rangle}{\|\tilde x_1\|^2} \tilde x_1\\
&=y_i-\frac{\langle y_i,\tilde y_1\rangle}{\|\tilde y_1\|^2} \tilde y_1-\frac{\langle y_i,\tilde x_1\rangle}{\|\tilde x_1\|^2} \tilde x_1\\
&=y_i-P_1y_i.
\end{align*}
Apply recursively the procedure to $\mathcal S$ until there are no more pairs to orthogonalize. 

To implement this orthogonalization we utilize the GS method.
The next result combines GS orthogonalization with the pairwise symmetrization discussed above to produce the symmetrized GS orthogonalization. 
It is formulated so that \ref{alg:igso} and \ref{alg:iso} can be directly utilized in numerical implementations as shown in \ref{alg:isgso}.
 The proof of the result is given in \ref{sec:pfsal}. 
\begin{proposition}
\label{prop:gssym}
For any sequence $\mathbf z=(z_i)_{i=1}^n$ of linearly independent vectors let $\mathbf y=gs(\mathbf z)$ be a sequence of the orthonormal vectors obtained by application of the GS method to $\mathbf z$. 
Consider $\mathbf x=(x_i)_{i=1}^{2k}$ and define $ \mathbf x^R$ and $ \mathbf x^L$ through
\begin{subequations}
\begin{align*}
(x^L_1,x^L_{2k},x^L_2,x^L_{2k-1}, \dots,x^L_k,x^L_{k+1})&=gs(x_1,x_{2k},x_2,x_{2k-1}, \dots,x_k,x_{k+1} ),\\
(x^R_{2k},x^R_1,x^R_{2k-1},x^R_2, \dots,x^R_{k+1}, x^R_k )&=gs(x_{2k},x_1,x_{2k-1},x_2, \dots,x_{k+1}, x_k ).
\end{align*}
\end{subequations}
Define $\mathbf y=(y_i)_{i=1}^{2k}$ so that $(y_i,y_{2k - i+1})$ is obtained from $(x_i^L, x_{2k-i+1}^R)$ by the orthonormalization described in \ref{prop:reflect}.
Then 
\begin{itemize}
\item[1)] ${ \mathbf y}$ has orthogonal terms, 
\item[2)] $(y_i, y_{2k-i+1})$ are orthogonal to $\{ x_j,  x_{2k-j+1}, j<i\}$,
\item[3)] $\{ x_j,  x_{n-j+1}, j\le i\}$ is spanned by $\{ y_j,  y_{2k-j+1}, j \le i\}$, 
\item[4)]  if $Sx_i=  x_{2k-i+1}$, $i=1,\dots, k$, for a certain self-adjoint symmetry operator $S$, then also $Sy_i=  y_{2k-i+1}$.
\end{itemize}
\end{proposition}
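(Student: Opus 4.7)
The plan is to reduce all four statements to a single structural observation about the outputs of the two Gram--Schmidt runs. Set $V_i := \operatorname{span}\{x_j,\,x_{2k-j+1} : j\le i\}$ with $V_0 = \{0\}$. In the interleaved input $(x_1, x_{2k}, x_2, x_{2k-1},\dots)$ of the left run, $x_i$ occupies position $2i-1$, and its $2(i-1)$ predecessors span precisely $V_{i-1}$; since GS preserves spans, this forces
\begin{equation*}
x^L_i \;=\; \frac{x_i - P_{V_{i-1}} x_i}{\|x_i - P_{V_{i-1}} x_i\|}.
\end{equation*}
The right run has input $(x_{2k}, x_1, x_{2k-1}, x_2,\dots)$, so $x_{2k-i+1}$ sits at position $2i-1$ with the \emph{same} span-$V_{i-1}$ predecessors, giving an analogous formula for $x^R_{2k-i+1}$. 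Thus $x^L_i$ and $x^R_{2k-i+1}$ are the normalized orthogonal projections of $x_i$ and $x_{2k-i+1}$ onto $V_{i-1}^\perp$.

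Statements (1)--(3) then fall out quickly. For (2), $y_i$ and $y_{2k-i+1}$ are linear combinations of $x^L_i$ and $x^R_{2k-i+1}$, so they too lie in $V_{i-1}^\perp$. For (1), within-pair orthogonality is given by Proposition~\ref{prop:reflect}, while for a cross-pair with $i<j$ one has $y_i, y_{2k-i+1}\in V_i \subseteq V_{j-1}$ whereas $y_j, y_{2k-j+1}\in V_{j-1}^\perp$ by (2), killing all four cross-inner products. For (3), the inclusion $\operatorname{span}\{y_l,y_{2k-l+1}: l\le i\}\subseteq V_i$ holds because each $y_l$ is a linear combination of $x^L_l, x^R_{2k-l+1} \in V_l\subseteq V_i$; equality then follows by dimension, the $y$'s being orthonormal and $\dim V_i = 2i$ from the linear independence of $\mathbf x$ (implicit in the GS assumption).

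Statement (4) is where I expect the real work to lie, but it reduces entirely to verifying $S x^L_i = x^R_{2k-i+1}$, after which Proposition~\ref{prop:reflect} with $x=x^L_i$ and $y=x^R_{2k-i+1}$ yields $S y_i = y_{2k-i+1}$. The hypothesis $Sx_j = x_{2k-j+1}$ permutes the generators of $V_{i-1}$, so $V_{i-1}$ is $S$-invariant; combined with $S^* = S$, the orthogonal complement $V_{i-1}^\perp$ is invariant as well, and $S$ commutes with $P_{V_{i-1}}$. Also $S^2 = I$ and $S^* = S$ together give $SS^* = I$, so $S$ is an isometry. Applying $S$ to both numerator and denominator of the formula for $x^L_i$ then produces exactly the formula for $x^R_{2k-i+1}$. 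The main obstacle throughout is the careful bookkeeping of the interleaved indexing that identifies both $x^L_i$ and $x^R_{2k-i+1}$ as projections onto the \emph{same} subspace $V_{i-1}$; once that is locked down, the rest rides on standard facts about orthogonal projection and on the fact that a self-adjoint involution is unitary and commutes with projection onto any invariant subspace.
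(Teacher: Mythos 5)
Your proof is correct. It rests on the same underlying mechanism as the paper's — the interleaved orderings force both GS runs to orthogonalize the $i$-th pair against the identical subspace $V_{i-1}=\operatorname{span}\{x_j,x_{2k-j+1}:j<i\}$, after which Proposition~\ref{prop:reflect} supplies within-pair orthogonality and the transfer of the symmetry — but the organization differs. The paper argues by induction on the pair index: the induction hypothesis carries the span identity for $\{y_j,y_{2k-j+1}:j\le i\}$, and for part (4) it argues that $Sx_{i+1}^L$ satisfies the conditions that uniquely determine $x_{2k-i}^R$ as a GS output. You instead write down the closed-form projection formula $x^L_i=(x_i-P_{V_{i-1}}x_i)/\|x_i-P_{V_{i-1}}x_i\|$ once and for all, which makes (1)--(3) immediate without induction and reduces (4) to the commutation of a self-adjoint involution with the projection onto an invariant subspace. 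Your route is slightly more self-contained on one point: applying the isometry $S$ to the explicit formula produces $x^R_{2k-i+1}$ with the correct normalization and sign automatically, whereas the paper's appeal to uniqueness of the GS output implicitly relies on the standard positive-coefficient convention. Both arguments are valid and carry the same information; a detail neither you nor the paper spells out, but which is needed before invoking Proposition~\ref{prop:reflect}, is that $x^L_i$ and $x^R_{2k-i+1}$ are linearly independent — this follows from your formula, since a vanishing nontrivial combination of the two residuals would place a nontrivial combination of $x_i$ and $x_{2k-i+1}$ inside $V_{i-1}$, contradicting the linear independence of $\mathbf x$.
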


\begin{algorithm}[thb]
\caption{Symmetrized Gram-Schmidt orthonormalization (in R-language)}
\label{alg:isgso}
\begin{lstlisting}[language=R]
#INPUT: `A' - columnwise matrix representation of the input vectors; `H' - Gram matrix of the vectors; 
#OUTPUT: `B' - columnwise matrix representation of the symmetric GS orthonormalization of the vectors represented by `A';

sgrscho=function(A,H){
  nb=dim(H)[1]; np=floor(nb/2) #  number of vectors and pairs
  B=A
  BR=matrix(0,ncol=2*np,nrow=nb)
  HR=matrix(0,ncol=2*np,nrow=2*np)
  J=c(2*(1:np)-1,2*(1:np)); K=c(nb:(nb-np+1),1:np)
  BR[,J]=A[,K]; HR[J,J]=H[K,K]

  BL=matrix(0,ncol=nb,nrow=nb); HL=matrix(0,ncol=nb,nrow=nb)
  J=c(J,nb); K=c(1:np,nb:(nb-np+1),np+1)
  BL[,J]=A[,K]; HL[J,J]=H[K,K]

  BL=grscho(BL,HL); BR=grscho(BR,HR) # Gram-Schmidt method

  B[,np+1]=BL[,nb] # center for the odd case

  for(i in 1:np){ # symmetrization
    X=cbind(BL[,2*i-1], BR[,2*i-1])
    h=(X[,1]%*% H %*%X[,2,drop=F])[1,1]
    B[,c(i,nb-i+1)]=symo(X,h)
  }
  return(B)
}
\end{lstlisting}
\end{algorithm}

The above defines symmetrized GS orthogonalization for an even number of vectors to be orthogonalized. 
The following augments it to the case of an odd number. 
\begin{corollary}
\label{cor:c2}
Consider $\mathbf x=(x_i)_{i=1}^{2k+1}$. Define $ \mathbf x^R$ and $ \mathbf x^L$ through
\begin{subequations}
\begin{align*}
(x^L_1,x^L_{2k+1},x^L_2,x^L_{2k-1}, \dots,x^L_k,x^L_{k+2},x^L_{k+1})&=gs(x_1,x_{2k+1},x_2,x_{2k}, \dots,x_k,x_{k+2},x_{k+1} ),\\
(x^R_{2k+1},x^R_1,x^R_{2k},x^R_2, \dots,x^R_{k+2}, x^R_k )&=gs(x_{2k+1},x_1,x_{2k},x_2, \dots,x_{k+2}, x_k ).
\end{align*}
\end{subequations}
Further, let $(y_i)_{i=1,i\ne k+1}^{2k+1}$ be defined using \ref{prop:gssym} for $(x_i)_{i=1,i\ne k+1}^{2k+1}$ and let $y_{i+1}=x^L_{k+1}$ . Then
$\mathbf y=(y_i)_{i=1}^{2k+1}$ satisfies
\begin{itemize}
\item[1)] orthogonal terms $(y_i, y_{2k-i+2})$ are also orthogonal to $\{ x_j,  x_{2k-j+2}, j<i\}$, 
\item[2)] $\{ x_j,  x_{n-j+2}, j\le i\}$ are spanned by $\{ y_j,  y_{2k-j+2}, j \le i\}$,
\item[3)]  if $Sx_i=  x_{2k-i+2}$, $i=1,\dots, k$,  and $Sx_{k+1}=x_{k+1}$, for a certain self-adjoint symmetry operator $S$, then also $Sy_i=  y_{2k-i+2}$, $i=1,\dots, k$, and $Sy_{k+1}=y_{k+1}$. 
\end{itemize}
\end{corollary}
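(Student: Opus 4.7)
The plan is to reduce the odd case to the even case already handled by Proposition~\ref{prop:gssym}. The idea is to treat the $2k$ outer vectors as a block, invoke Proposition~\ref{prop:gssym} for them, and then append the central vector $y_{k+1}$, which is produced as the very last output of the left-to-right Gram--Schmidt process.

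First, I would apply Proposition~\ref{prop:gssym} directly to the outer vectors $(x_i)_{i=1,\,i\ne k+1}^{2k+1}$, treating the symmetric pairs $(x_1,x_{2k+1}),(x_2,x_{2k}),\dots,(x_k,x_{k+2})$ as its input. This immediately yields orthonormality of the outer $y_i$'s, the spanning statement restricted to the outer indices, and (in the self-adjoint case) the symmetry relation $Sy_i = y_{2k-i+2}$ for $i\ne k+1$. This disposes of the ``outer'' content of items 1), 2), and 3).

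Next, I would establish that $y_{k+1}=x^L_{k+1}$ is orthogonal to every $y_i$ with $i\ne k+1$. Since $x^L_{k+1}$ is the final output of $gs(x_1,x_{2k+1},\dots,x_k,x_{k+2},x_{k+1})$, by the defining property of the GS process it is orthonormal to the preceding outputs $x^L_1,x^L_{2k+1},\dots,x^L_k,x^L_{k+2}$. The span of these preceding vectors equals the span of the original outer $x_i$'s, which by property~3) of Proposition~\ref{prop:gssym} equals $\mathrm{span}\{y_i:i\ne k+1\}$. Hence $y_{k+1}$ is orthogonal to all other $y_i$'s. Combined with the outer conclusions, this completes item 1) and, via a dimension count from the linear independence of the $2k+1$ original $x_i$'s, also items 2) and the spanning portion of 3).

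The main obstacle is the symmetry claim $Sy_{k+1}=y_{k+1}$. The key will be to show that the orthogonal projection $P$ onto $\mathrm{span}\{y_i:i\ne k+1\}$ commutes with $S$. This follows because the range of $P$ is $S$-invariant (the action of $S$ on the outer $y_i$'s is the involution $y_i \leftrightarrow y_{2k-i+2}$ from Proposition~\ref{prop:gssym}), and because $S$ is self-adjoint, the orthogonal complement is automatically $S$-invariant as well; standard operator theory then gives $SP=PS$. Given $Sx_{k+1}=x_{k+1}$, one computes $S(x_{k+1}-Px_{k+1}) = x_{k+1} - PSx_{k+1} = x_{k+1}-Px_{k+1}$, and normalizing yields $Sy_{k+1}=y_{k+1}$. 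The delicate step is the commutation $SP=PS$, but it reduces to the $S$-invariance of the range, already furnished by the outer part of the proof, so no new machinery is needed.
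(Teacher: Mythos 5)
Your proposal is correct and follows essentially the same route as the paper: the outer claims are delegated to \ref{prop:gssym}, and the symmetry of the central vector is obtained by showing that the orthogonal projection $P$ onto the span of the outer $y_i$'s satisfies $SPx=Px$ for $S$-fixed $x$, so that the GS residual $x_{k+1}-Px_{k+1}$ is itself $S$-fixed. The only cosmetic difference is that the paper verifies $SPx=PSx=Px$ by writing out the sum and using $Sy_i=y_{2k-i+2}$ together with $S^*=S$, whereas you invoke the standard invariant-subspace argument for the commutation $SP=PS$; the underlying mechanism is identical.
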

The results are used in \ref{alg:isgso}, which is the basis for orthogonalization procedures used in numerical implementation of the methods of this and the next section in the {\tt splinets} package. 
Although it was not emphasized in our discussion, all the orthogonalized outputs have been also normalized.  
This is spelled out explicitly in \ref{alg:igso} -\ref{alg:isgso} by using the term {\it`orthonormalization'} in their descriptions. 
In our descriptions of orthonormalization methods, the following definition proves useful in reducing the notational burden. 
\begin{definition}
\label{def:soop}
Let $\mathbf x=(x_1,\dots,x_m)$ have linearly independent components belonging to some Hilbert space and $\mathbf y=(y_1,\dots,y_m)$ have orthonormalized components obtained from $\mathbf x$ by orthogonalization described in  \ref{prop:gssym} and \ref{cor:c2}, augmented by the normalization of the orthogonal outcome. 
This transformation 
\begin{equation}
\label{eq:chb} 
\mathbf y=\mathcal G\left( \mathbf x\right)
\end{equation}
can be also interpreted as a mapping $\mathbf P:\sum_{i=1}^m \alpha_ix_i \mapsto \sum_{i=1}^m \alpha_iy_i$, i.e. a change of basis transformation.  
\end{definition}
\begin{remark}
Let $\mathbf A=\left[ a_{ij}\right]$ be such that $x_i=\sum_{j=1}^p a_{ij} e_j$, for a certain basis $\mathcal E=\{e_i,i=1,\dots p\}$.
Then the transformation $\mathbf P$ connects with the output $\mathbf B$ in \ref{alg:isgso} through 
\begin{align*}
y_i&=\sum_{j=1}^p b_{ij} e_j=\mathbf P x_i =\sum_{j=1}^p a_{ij} \mathbf P e_j =\sum_{j=1}^p\sum_{r=1}^p a_{ir}  p_{rj} e_j .
\end{align*}
i.e. the matrix representation of $\mathbf P$ in the basis $\mathcal E$ satisfies $\mathbf B=\mathbf A \mathbf P$. 
\end{remark}

\section{The splinets - structured orthogonalization}
\label{sec:splinets}
We propose a  novel approach to $B$-spline orthogonalization.
It has important advantages over the previous methods by giving overall small support of the basis and  sporting an elegant symmetry. 
The term {\it  $O$-spline} has been used loosely in the past to any orthogonal family of splines.
In what follows we will use the term {\em splinet} specifically to the set of $O$-splines that is defined through the presented method. 
In the approach, we orthogonalize the $B$-splines so that their natural structure is followed as closely as possible, leading to some optimality of the resulting orthogonal spline base.
To emphasize that our starting point are the $B$-splines, the orthogonal splines of a splinet are referred to as the {\it $OB$-splines}. 

In some aspects, splinets resemble wavelets. 
In particular, we have a parallel concept to resolution scales that are determined by the number of in-between knots intervals holding the support of an individual spline. 
However, we refer to them as the {\it support levels} since in the case of the splines they do not truly represent different resolution scales.  

The symmetric GS orthogonalization of the previous section aimed at reduction of the size of the total support for the elements of the basis while at the same time preserving symmetry features.
Both properties are improved even further by the proposed orthogonalization leading to a splinet. 
The construction is largely benefitting from  restructuring  the knots, the in-between knots intervals, and then the $B$-splines from a sequential structure into a dyadic one as described next. 

\subsection{Dyadic structures}
\label{subsec:dyadicstr}
For the $B$-splines of order $k$, the case of the number of the internal knots equal to $n=k2^N-1$ is referred to as the {\it full dyadic case}.
We first consider the case of $k=1$ to set the basic notation and then we turn to the case of general $k$ that can be then easily laid out. 
\subsubsection*{The case $k=1$}
We first note that the case of $N=1$ is trivial  so in what follows we assume $N>1$. 
By having the dyadic structure of the in-between knots intervals, we denote the following hierarchical structure of support sets for the to-be constructed $OB$-splines. 

The smallest support range is at the level $l=0$ (referred to as the $0$-support level) and is made of neighboring pairs of individual intervals, i.e. we pair the first one with the second, the third with the fourth and so on until the second last is paired with the last.  
These smallest support intervals are denoted by $I_{r ,0}$, $r=1,\dots, 2^{N-1}$.
We note that the Lebesgue measure of the total support made of all the intervals at this support range is just equal to the length of the entire domain, i.e. to $\xi_{2^N}-\xi_{0}$.
Assuming the knots $\xi_{0}<\xi_{1}<\dots <\xi_{2^N-1}< \xi_{2^N}$, we have $I_{r,0}=(\xi_{2r-2},\xi_{2r}]$, with the central knot $\xi_{2r-1}$ in each $I_{r,0}$.

\begin{figure}[t!]
  \centering
 \begin{minipage}{0.47\textwidth}
\includegraphics[width=\textwidth]{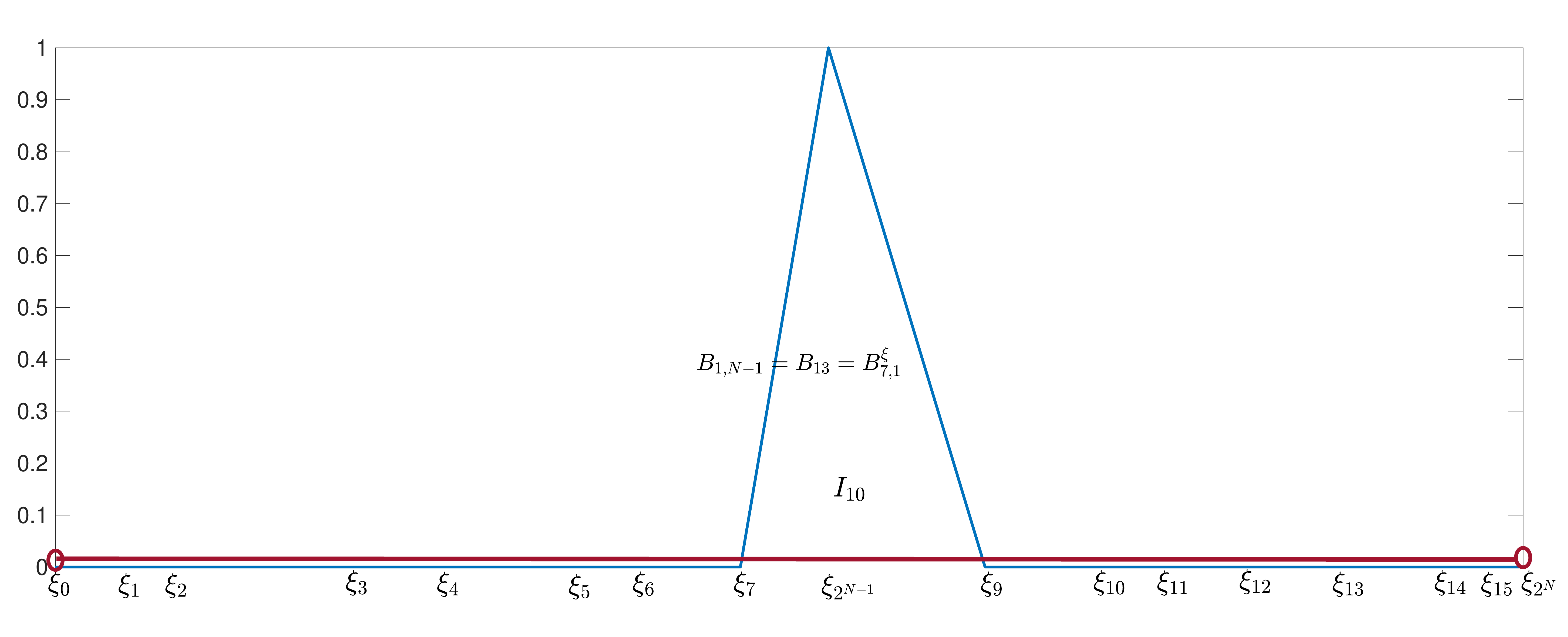}
\includegraphics[width=\textwidth]{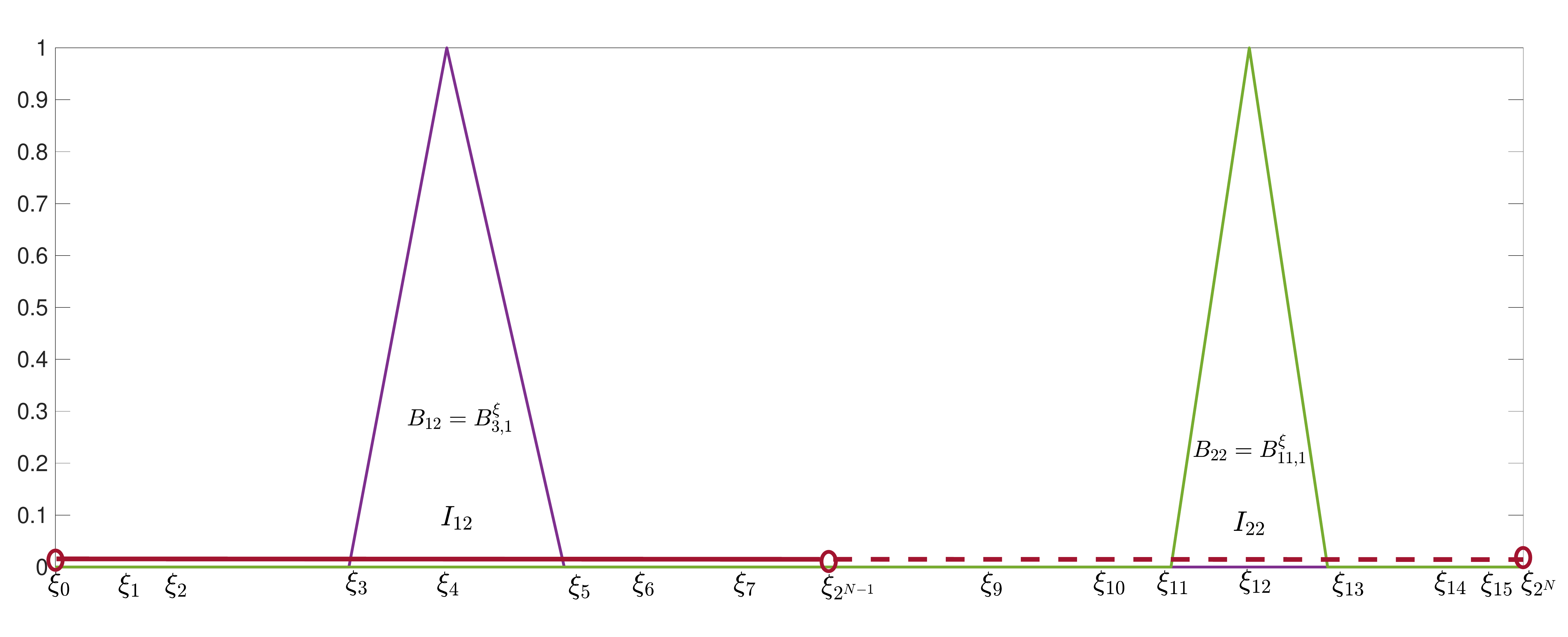}
\includegraphics[width=\textwidth]{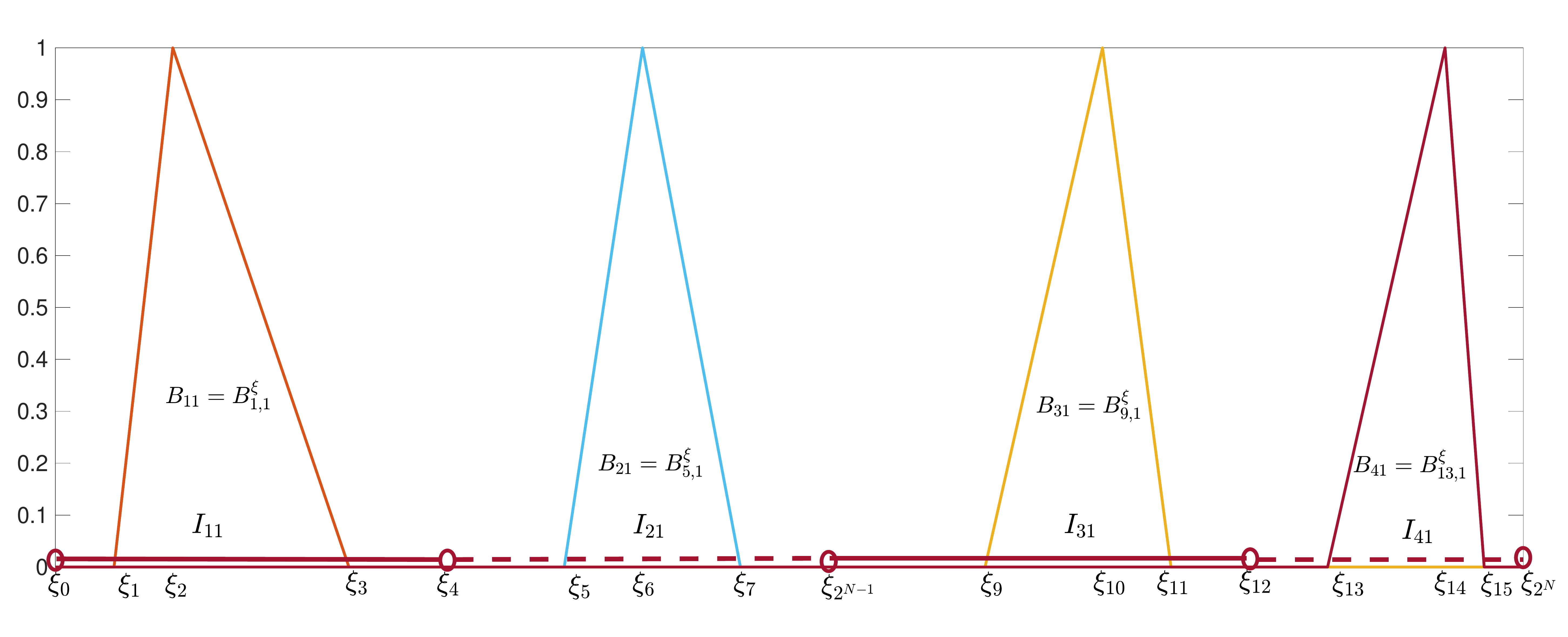}
\includegraphics[width=\textwidth]{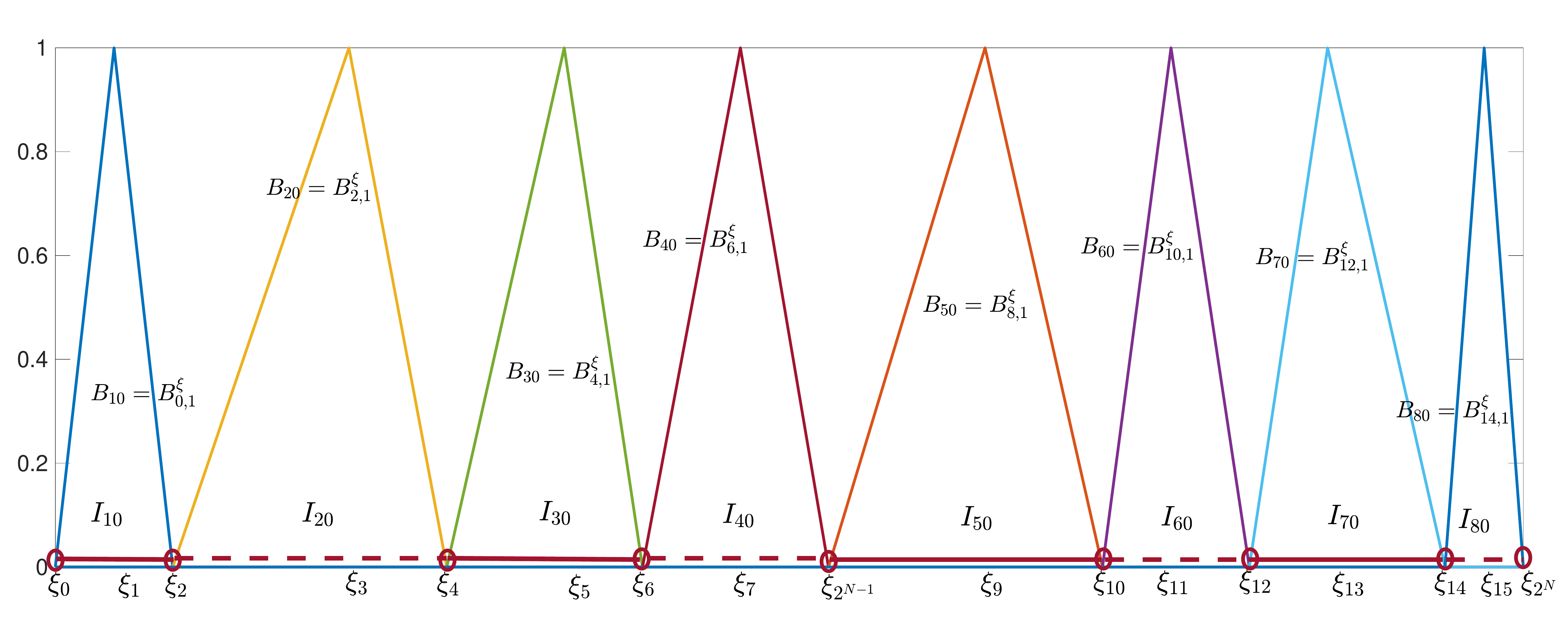}
\end{minipage}
\begin{minipage}{0.52\textwidth}
\includegraphics[width=1\textwidth, height=12.9cm]{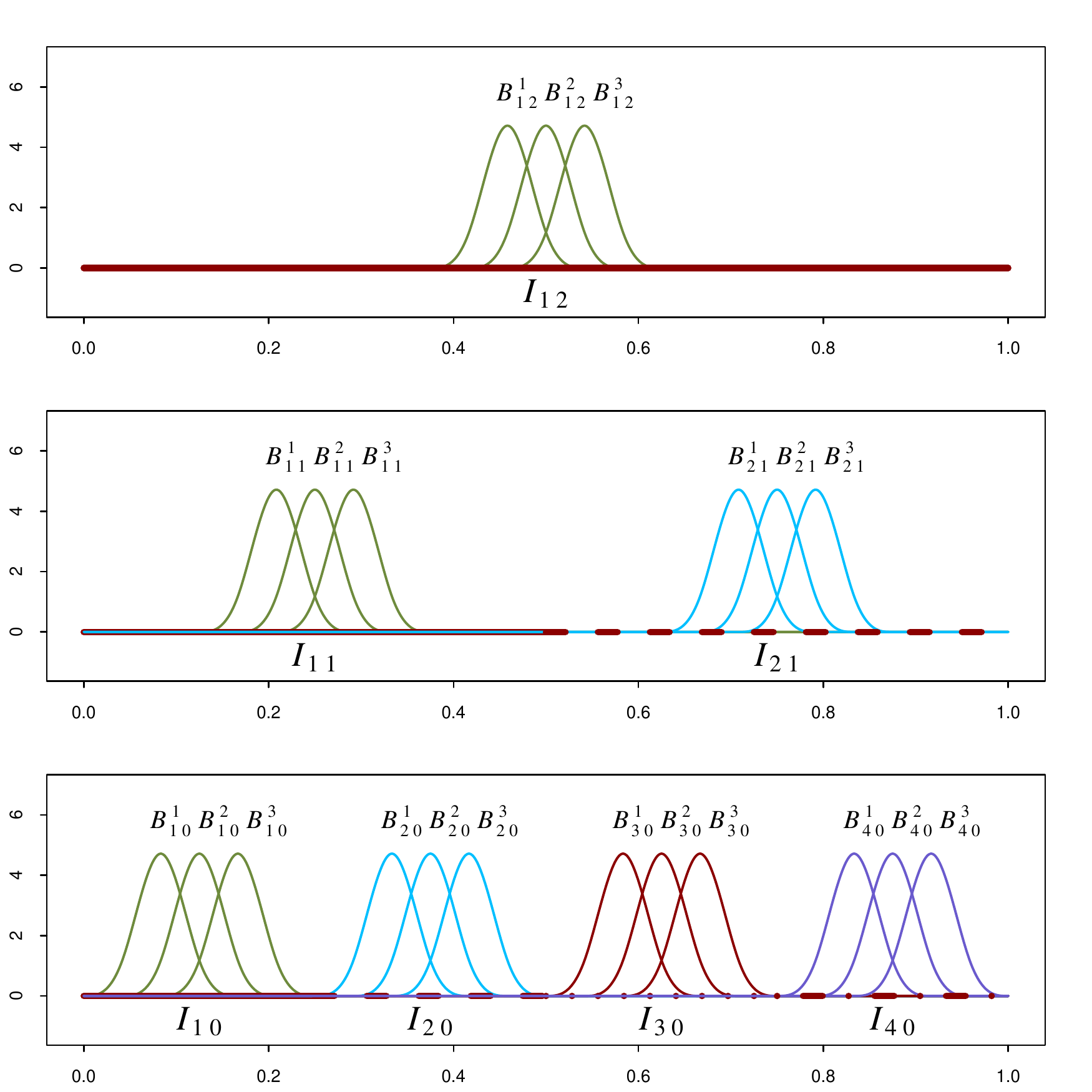}
\end{minipage}
  \caption{{\it Left: }The dyadic structure of the support sets and corresponding $B$-splines for the case of $N=4$, i.e. $n=15$.  {\it Right:} 
The dyadic structure of the support sets and cubic B-splines  for the case of $N = 3$, i.e. $n = 23$. On the horizontal axis, we denoted by alternating solid and dashed lines the support sets structure for the four (left) and  three (right) support levels }
  \label{fig:DydStr}
\end{figure}
For larger support ranges we proceed recursively. 
For  the support level $l\in \{ 0,\dots, N\}$, let us recursively define   the support range interval  $I_{r,l}= (\xi_{r,l}^L, \xi_{r,l}^R]$, $r=1,\dots, 2^{l} $, where 
$\xi_{r,l}^L$ is left knot of the $r^{\rm{th}}$ support interval at level $l$ and $\xi_{r,l}^R$ is right knot of this interval, with the central point $\xi_{r,l}^C$ so that
$$
\xi_{r,0}^L=\xi_{2r-2} ,~~\xi_{r,0}^C=\xi_{2r-1},~~\xi_{r,0}^R=\xi_{2r}.
$$
Given the definition for $l\ge 0$ one defines the support of the order $l+1$ by grouping the two neighboring intervals $I_{r,l}$, i.e. $I_{r,l+1}=(\xi_{2r-1,l}^L, \xi_{2r,l}^R]$, $r=1,\dots, 2^{N-l-1}$,  in the same manner as in the first step, with the central point $\xi_{r,l+1}^C= \xi_{2r-1,l}^R=  \xi_{2r,l}^L $  being the common knot of the two combined intervals of the lower support range that make $I_{r,l+1}$. 
 The right,  the left and the central knots of the $r^{\rm th}$ support interval at level $l$ define in terms of the original knots $\xi_{0}<\xi_{1}<\dots <\xi_{2^N-1}< \xi_{2^N}$  as follows
 \begin{align}
 \label{eq:knots1}
\xi_{r,l}^L = \xi_{(2r-2) 2^{l}} ;  \quad \quad  \xi_{r,l}^C =\xi_{(2r-1) 2^{l}}  ; \quad \quad   \xi_{r,l}^R = \xi_{(2r) 2^{l}}.
\end{align}

We proceed like this until the single interval $I_{1,N-1}=(\xi_{0},\xi_{2^N}]$ of the zero support range with $\xi_{1,N-1}^C=\xi_{2^{N-1}}$. 
We point that the total size of the interval at any given support level is still equal to $\xi_{2^N}-\xi_{0}$. 
For illustration see \ref{fig:DydStr}~{(Left)}.  

Now, we explain how the sequential structure of the $B$-splines is transformed into the dyadic structure. 
As argued before, the dimension of the linear space spanned by the $B$-splines is  $2^{N}-1$.
 Once we have the support interval structure we can relate the first order $B$-splines $B_{r,l}$ with the corresponding support interval $I_{r,l}$ by taking the central point $\xi_{r,l}^C$ of $I_{r,l}$ and taking for $B_{r,l}$ this unique $B$-spline that has $\xi_{r,l}^C$ as its central point. See \ref{fig:DydStr}~{\it(Left)} for illustration how the so-rearranged $B$-splines relate to the original sequentially ordered $B$-splines.  
 
 \subsubsection*{The case of an arbitrary $k$} 
 First, we  bind every $k$ adjacent $B$-Splines into a group called a $k$-tuplet
 \begin{equation}\label{eq:tuplet}
\mathbf B_{i}=(B_{ik,k}^{\boldsymbol \xi},\dots, B_{(i+1)k-1,k}^{\boldsymbol \xi}),~~i=0,\dots, 2^N-2.
\end{equation}
 By this construction, the total number of $k$-tuplets is  $2^{N}-1$.
\ref{fig:DydStr}~{(Right)} is an example of a dyadic structure for splines of order three. 

The `smallest support range' is at the level $0$ and consists of $2^{N-1}$ neighboring tuplets that are not intersecting.  
These smallest support intervals are denoted by $I_{r ,0}$, $r=1,\dots, 2^{N-1}$.
Assuming that we have the knots $\xi_{0}<\xi_{1}<\dots <\xi_{k2^N-1}< \xi_{k2^N}$, we consider $I_{r,0}=(\xi_{2k(r-1)},\xi_{2kr}]$, with the central knot $\xi_{2kr-k}$ in each $I_{r,0}$.

For larger support intervals denote the support interval $l\le N-1$  by $I_{r,l}= (\xi_{r,l}^L, \xi_{r,l}^R]$, $r=1,\dots, 2^{N-l-1} $, where 
$\xi_{r,l}^L$ is left hand side knot of the $r^{\rm{th}}$ support interval at the level $l$ and $\xi_{r,l}^R$ is the right hand side knot of this interval, with the central point $\xi_{r,l}^C$ so that
\begin{equation*}
\xi_{r,0}^L=\xi_{2k(r-1)} ,~~\xi_{r,0}^C=\xi_{2kr-k},~~\xi_{r,0}^R=\xi_{2kr},
\end{equation*}
and similarily as before the support interval of the order $l+1$ is obtained by merging the two neighboring intervals $I_{r,l}$, i.e. $I_{r,l+1}=(\xi_{2r-1,l}^L, \xi_{2r,l}^R]$, $r=1,\dots, 2^{N-l-2}$ with the central point $\xi_{r,l+1}^C= \xi_{2r-1,l}^R=  \xi_{2r,l}^L $. 
Similarly as before
 \begin{align*}
\xi_{r,l}^L = \xi_{2k(r-1) 2^{l}} ;  \quad \quad  \xi_{r,l}^C =\xi_{(2kr-k) 2^{l}} ; \quad \quad   \xi_{r,l}^R = \xi_{(2kr) 2^{l}} .
\end{align*}

We proceed like this until the single interval $I_{1,N-1}=(\xi_{0},\xi_{k2^N}]$ of the zero support interval with $\xi_{1,N-1}^C=\xi_{k2^{N-1}}$. 
On the so defined dyadic structure of supports, we build from the original sequence of $B$-splines the dyadic structure of the $k$-tuplets in the same fashion as one can build it for the case of $k=1$ except we replace individual splines by $k$-tuplets. 
The dyadic net (pyramid, tree) of $k$-tuplets having $N$ rows is denoted as $\mathcal B=\left\{\mathbf B_{i,l}, i=1,\dots, 2^{N-l-1},~~ l=0,\dots, N-1 \right\}$.
  
\begin{algorithm}[t!]
\caption{Recursive step $\bar{\mathcal B} \mapsto (\mathcal D(\bar{\mathcal B}\,),\mathcal R(\bar{\mathcal B}\,))=(\widetilde{\mathcal B},\mathcal{OB})$.}
\label{alg:DySp}
\begin{algorithmic}[1]\vspace{2mm}
\State{ 
\parbox{0.11\textwidth}{ \hspace{-3mm} \bf Input:} 
\parbox[t]{0.85\textwidth}{ \small
 \hspace{-5mm} Dyadic structure of the $k$-tuplets, 
 $$
 \bar{\mathcal B}=\left\{ \bar{\mathbf B}_{i,l}, l=0,\dots, \bar N-1, i=1,\dots,2^{\bar N- l-1}\right\}. 
 $$ }
}
\State{
\parbox{0.11\textwidth}{ \hspace{-3mm} \bf Output:\vspace{-5mm}}\parbox[t]{0.85\textwidth}{
\begin{description}   \small
\item[\hspace{-1mm}  \sc 1.]   Dyadic structure of the $k$-tuplets 
{
$$\widetilde{\mathcal B}=\left\{ \widetilde{\mathbf B}_{i,l}, l=0,\dots,\bar N-2, i=1,\dots,2^{\bar N -l-1} \right\}$$
}
\hspace{-12mm} that is orthogonalization of 
{
 $
\left\{ \bar{\mathbf B}_{i,l}, l=1,\dots, \bar N-1, i=1,\dots,2^{\bar N- l-1}\right\}
 $ }\\
\mbox{}\hspace{-12mm}  with respect to $\bar{\mathbf B}_{i,0}$,  $i=1,\dots,2^{\bar N-1}$; \vspace{2mm}
\item[ \hspace{-2mm} \sc 2.] Symmetric orthonormalization 
{\small 
$$\mathcal{OB}=\left(\mathbf{OB}_{i, 0}, i=1,\dots,2^{\bar N-1}\right)$$} 
\hspace{-12mm} of the $k$-tuplets in the bottom row of $\bar{\mathcal B}$
 \end{description}
\vspace{2mm}
}
}
\State{
 \small
{\sc Step 1.} Using symmetric orthonormalization \ref{eq:chb} obtain
 $$
\mathbf{OB}_{i,0}=\mathcal G\left( \bar{\mathbf B}_{i, 0}\right), ~~~ i=1,\dots,2^{\bar N-1};
$$
{\sc Step 2.} For each  $k$-tuplet $\bar{\mathbf B}_{i,l}$, $l=1,\dots,\bar N-1$, $i=1,\dots,2^{\bar N- l-1}$, orthogonalize its entries with respect to  
$\mathbf{OB}_{i,0}$,  $i=1,\dots,2^{\bar N-1}$ using \ref{eq:recst} to obtain for $l=0,\dots,\bar N-2$ and $i=1,\dots,2^{\bar N- l-2}$:
$$
\widetilde{\mathbf B}_{i,l}=\mathbf D(\bar{\mathbf B}_{i,l-1});
$$
}
\end{algorithmic}
\end{algorithm}
\subsection{The dyadic algorithm}
\label{subsec:da}
For the set of B-splines with a dyadic structure, orthogonalization is performed using recursion. 
In the main recursion step implemented in  \ref{alg:DySp}, we consider  an input dyadic structure of splines $\bar{\mathcal{B}}$, with $\bar N$ levels. 
We start with defining the GS symmetric orthonormalization of the $k$-tuplets  in the bottom row in this structure, i.e.  ${\mathbf{OB}}_{i,0}=\mathcal G\left(  \bar{\mathbf{B}}_{i,0} \right) $, $i=1,\dots 2^{\bar N-1}$, where $\mathcal G$ is defined in \ref{eq:chb}.
The bottom row output from the recursion becomes 
\begin{equation*}
\mathcal{R}\left(\overline{\mathcal{B}}\, \right)\stackrel{\tiny def}{=}\left({\mathbf{OB}}_{i,0} \right)_{i=1}^{2^{\bar N-1}}.
\end{equation*}
 
Further,  we perform the  orthogonalization of a $k$-tuplet 
 at level $l$ with respect to the pairs of the orthonormalized $k$-tuplets at the level zero (bottom)  according to
\begin{multline}\label{eq:recst}
\mathbf D \left(\bar{\mathbf  B}_{i,l}\right)\stackrel{\tiny def}{=}\\
{\displaystyle \left(\!\bar{B}^s_{i,l}-\!
\sum\limits_{m=1}^k \! \left(\left \langle \! \bar{B}^s_{i,l}, {OB}^m_{r_{il}, 0} \right \rangle \!{OB}^m_{r_{il}, 0} \!+  \left \langle \! \bar{B}^s_{i,l}, {OB}^m_{r_{il}+1, 0} \right \rangle \! {OB}^m_{r_{il}+1,0}\right)\! \right)_{s=1}^k\!,}
\end{multline}
where $r_{il}=2^{l}(2i-1)$ and $i=1,\dots,2^{\bar N - l-1}$, $l=1,\dots,\bar N-1$. 
Using this operation on the $k$-tuplets we define an operation on a dyadic structure through
\begin{equation}
\label{eq:bigD}
\mathcal D\left(\bar{\mathcal B}\,\right)
=\left(
\mathbf D
\left(
\bar{\mathbf B}_{i,l}
\right), i=1,\dots, 2^{\bar N - l-1}, l=1,\dots, \bar N-1 
\right).
\end{equation}
The operation orthogonalizes with respect to $\mathcal R(\bar{\mathcal B}\,)$ all the rows  of the dyadic structure that are above the bottom row.  
Thus for a given $\bar{\mathcal{B}}$, \ref{alg:DySp} returns the pair 
$$
\left(\mathcal D\left(\bar{\mathcal B}\right) ,\mathcal R\left(\bar{\mathcal B}\right)  \right).
$$
The dyadic structure $\mathcal D(\bar{\mathcal B})$ has one row less than $\bar{\mathcal B}$ while  $\mathcal R(\bar{\mathcal B})$ is made of the orthogonalized splines in the bottom row of $\bar{\mathcal B}$,

This recursion is applied  in \ref{alg:DySp0} to a decreasing sequence of the dyadic structures ${\mathcal{B}}_l$, $l=0,\dots,N-1$ until only one support level is left in it. 
Namely, we start by defining 
\begin{equation}
\label{eq:zero}
\mathcal{PO}_0
\stackrel{def}{=}
\left(
{\mathcal{B}}_0,\mathcal{OB}_0
\right)
\stackrel{def}{=}\left(\mathcal D\left({\mathcal B}\right) ,\mathcal R\left({\mathcal B}\right)  \right).
\end{equation}
We treat $\mathcal{OB}_0$ as the bottom row of a dyadic structure  $\mathcal{PO}_0$ with $N$-rows that is a partially orthogonalized. 
Assume that we have defined $\mathcal{OB}_l$ and ${\mathcal B}_l$, and thus $\mathcal{PO}_j$, for some $j$ such that $0\le l < N-1$.
We  then define $\mathcal{PO}_{l+1}=(\mathcal{B}_{l+1},\mathcal{OB}_{l+1})$ through
\begin{equation}
\label{eq:re}
\begin{split}
 {\mathcal{OB}}_{l+1}&=\left(\mathcal R\left({{\mathcal B}_l}\right), {\mathcal{OB}}_l\right),\\
{\mathcal{B}}_{l+1}&=\mathcal D\left({{\mathcal B}_l}\right),
\end{split}
\end{equation}
so that ${\mathcal{OB}}_{l+1}$ contains $l+2$ bottom rows of an orthogonalized dyadic structure, while ${\mathcal{B}}_{l+1}$ is a dyadic structure with $N-l-2$ rows with splines not yet orthognalized within themself but orthogonal to ${\mathcal{OB}}_{l+1}$.

\begin{algorithm}

\caption{Dyadic algorithm.}

\label{alg:DySp0}

\begin{algorithmic}\vspace{2mm}

\State{\parbox{0.11\textwidth}{ \hspace{-3mm} \bf Input:} \parbox[t]{0.85\textwidth} {
 \hspace{-5mm} $\boldsymbol \xi=(\xi_0,\xi_1,\dots, \xi_n,\xi_{n+1})$ -- knots, $k$ -- the order of $B$-splines. }\vspace{2mm}}

\State{\parbox{0.11\textwidth}{ \hspace{-3mm} \bf Output:} \parbox[t]{0.85\textwidth}
{The splinet with dyadic structure of the $k$-tuplets {\small $\mathcal{OS}=\{ \mathbf{OB}_{i,l}, l = 0,...,N-1, i = 1,...,2^{N-l-1} \}$}, where $l$ is a support level and $i$ is the index of tuplets at level $l$.
 }\vspace{2mm}} 

{\sc Step 1.} Generate the $B$-splines spread over $\boldsymbol \xi$ and group them into the $k$-tuplets
$
\mathbf B_{i},~~i=0,\dots, 2^N-2
$ as in \ref{eq:tuplet};\;

{\sc Step 2.} Rearrange the $k$-tuplets  to a dyadic net $\mathcal B$ made of the $k$-tuples $\mathbf B_{i,l}$,  where $l=0,\dots, N-1$ is a support level and $i=1,\dots, 2^{N-l-1}$;\;\vspace{2mm}

{\sc Step 3.} \\
\For{ $l=0,\dots, N-1$} 
\State $$
  (\mathcal B,\mathcal{OB}):=(\mathcal D(\mathcal B),\mathcal R(\mathcal B))\;
$$
Add $\mathcal{OB}$ as the $l$th row in $\mathcal{OS}$ \;
\EndFor  
\end{algorithmic}
\end{algorithm} 

The recurrence stops at the moment when 
\begin{equation}\label{eq:stop}
\mathcal{OS}\stackrel{def}{=}\mathcal{OB}_{N-1}=\left(\mathcal R\left({{\mathcal B}_{N-2}}\right), {\mathcal{OB}}_{N-2}\right)
\end{equation}
is defined. In this step $\mathcal D\left({{\mathcal B}_{N-2}}\right)$ is not performed since ${\mathcal B}_{N-2}$ is made of one row with one $k$-tuple in it.   
The outcome from the algorithm becomes the  splinet  $\mathcal {OS}$. 
In \ref{fig:OB}, we see the two splinets obtained by the application of the dyadic algorithms to the B-splines with the dyadic structure presented in \ref{fig:DydStr}.

\begin{figure}[htbp]
  \centering
 \begin{minipage}{0.47\textwidth}
\includegraphics[width=\textwidth]{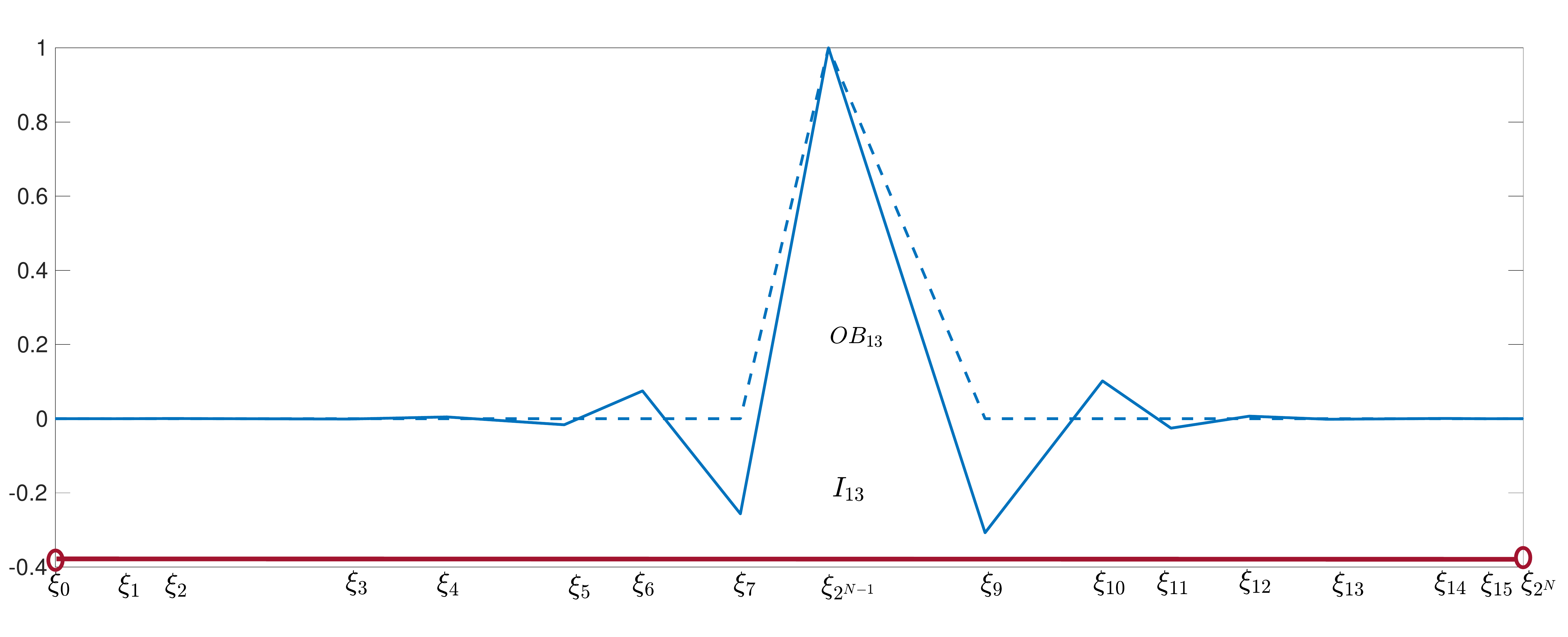}
\includegraphics[width=\textwidth]{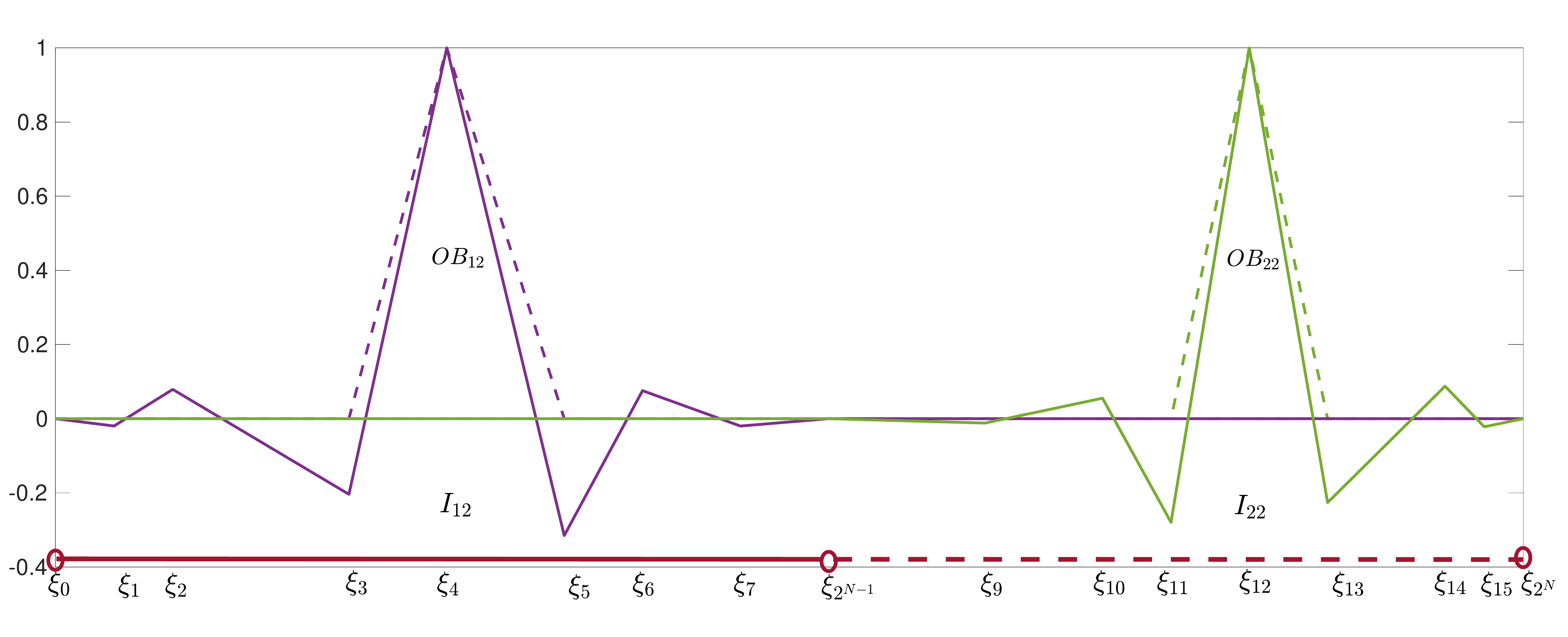}
\includegraphics[width=\textwidth]{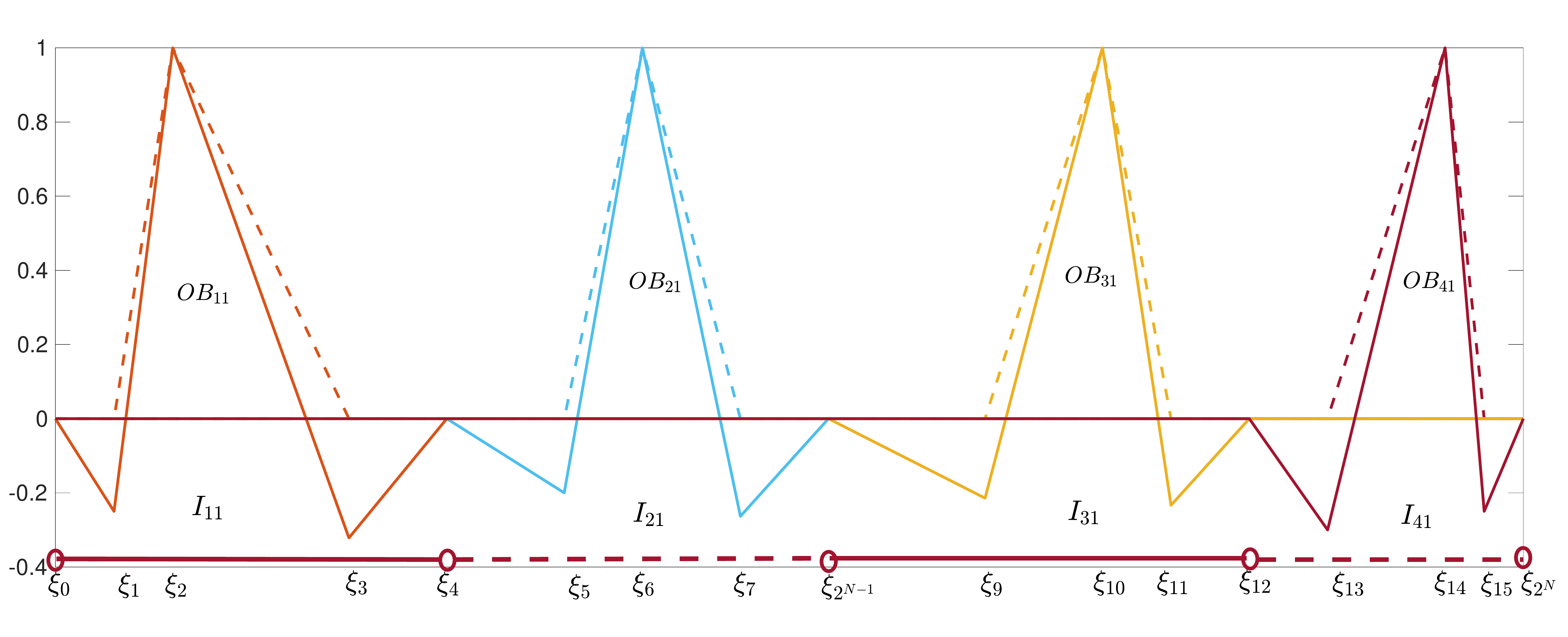}
\includegraphics[width=\textwidth]{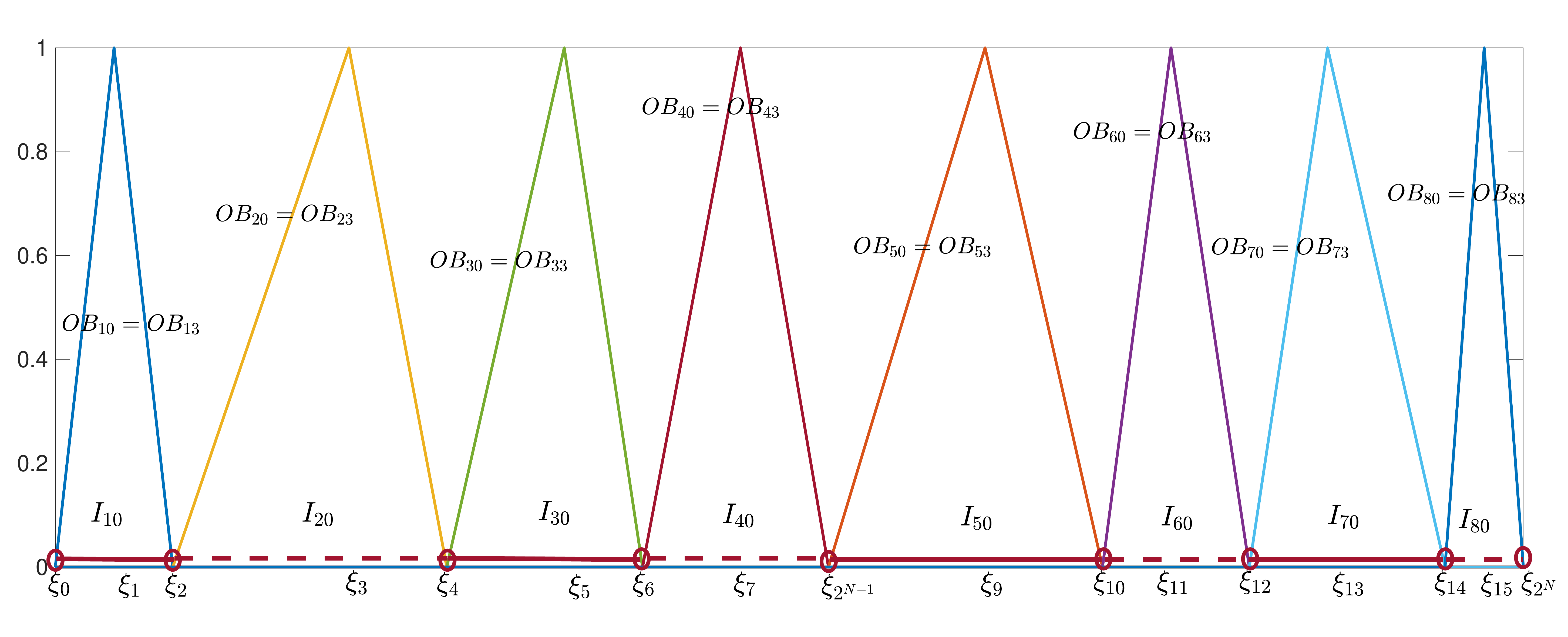}
\end{minipage}
\begin{minipage}{0.52\textwidth}
\includegraphics[width=1.0\textwidth, height=12.9cm]{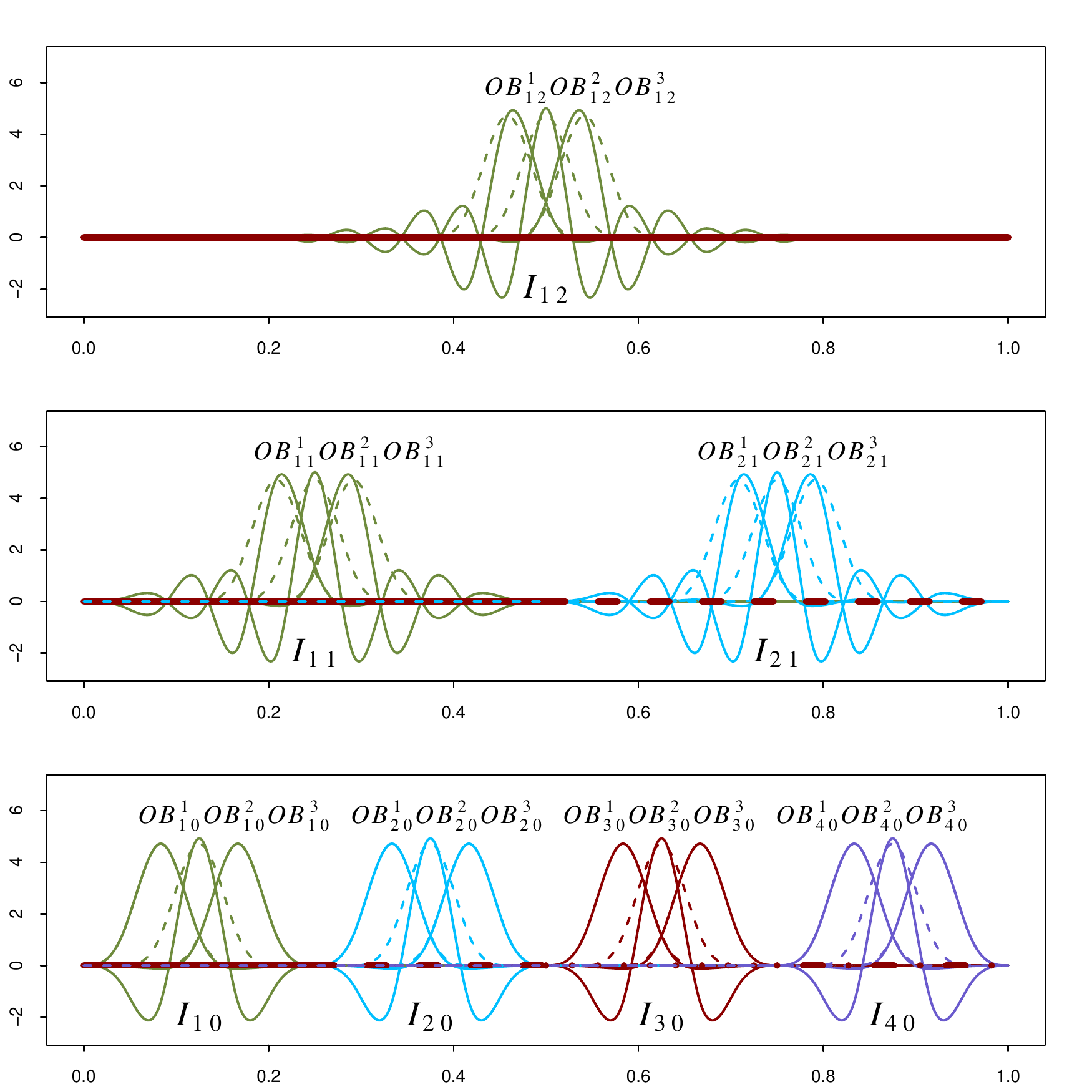}
\end{minipage}
  \caption{{\it (Left): } The construction of a splinet of the first order for the dyadic case $N=4$. The hierarchical with respect to support range orthogonalization was performed on the corresponding $B$-splines shown in \ref{fig:DydStr}~{\it (Left)} and also seen in the dashed lines in the top three graphs. 
{\it (Right):} The construction of a splinet of the 3rd order for the dyadic case $N = 3$ and equispaced knots. 
The hierarchical with respect to support range orthogonalization was performed on the corresponding $B$-splines shown in \ref{fig:DydStr}~{\it (Right)}. }
  \label{fig:OB}
\end{figure}

\subsection{Generic approach - dyadic orthogonalization}
\label{subsec:genapp}
To generalize the dyadic algorithm for a non-dyadic case, we firstly recast the dyadic algorithm into a generic Hilbert space approach. 
One can note that the Gram matrix of a set of B-splines has the special structure of a band matrix, i.e. a sparse matrix in which the nonzero elements are located in a band about the main diagonal. 
The bandwidth $m=2k-1$, where $k$ is the order of B-splines. 
Thus orthogonalization of a set of B-splines with dyadic structure can be rebranded as diagonalization of a band structure Gram matrix as explained in \ref{sec:intro}, in \ref{eq:diag}.
In this setting, we consider a set of linearly independent vectors in a Hilbert space $\mathcal X=\{x_r, r=1,\dots,d\}$.
These vectors represent in a certain basis $\mathcal E$ as columns of a matrix $\mathbf A$.  
The actual nature of this basis is irrelevant but simplify settings one can consider that $\mathcal X=\mathcal E$ and thus $\mathbf A=\mathbf I_d$, where $\mathbf I_d$ is the $d\times d$ identity matrix. 
Further $\mathbf H=[\langle x_r,x_s\rangle]_{r=1,s=1}^{d,d}$ is the Gram matrix for $\mathcal X$ and we assume that is is a band matrix with the bandwidth equal to $2k-1$ and $d=k(2^N-1)$, for some positive integer $N$. 
The goal is to define a transformation $\mathcal X \mapsto \mathcal Y$ such that the Gram matrix for $\mathcal Y$ is an identity matrix and $\mathcal Y$ coincides with the splinet obtained in the previous section.
In the terms of matrix representation of the problem we define an algorithm that transforming an input: $(\mathbf A, \mathbf H)$ into the output $\mathbf B$,
where $B$ is the representation of $\mathcal Y$ in $\mathcal E$. 
Assuming that $\mathbf A=\mathbf I_d$, than the matrix $\mathbf B$ represents the change of basis transformation of $\mathcal X$ that diagonalize $\mathbf H$, i.e.
$$
y_r=\sum_{s=1}^d B_{sr}x_s
$$
and $\mathbf I_d=\mathbf B^\top \mathbf H \mathbf B$. 
In \ref{fig:matrices} in \ref{sec:intro}, we presented two examples of $\mathbf B$ that were obtained for the dyadic $B$-splines. 

Following the recursion presented in the previous section, our algorithm is utilizing the following scheme
\begin{equation}
\label{eq:scheme}
(\mathbf A=\mathbf I_d, \mathbf H)\mapsto (\mathbf A_0, \mathbf H_0)\mapsto \dots (\mathbf A_{N-1}, \mathbf H_{N-1})=(\mathbf B, \mathbf I_d).
\end{equation}

The key recursion step is a function $\mathcal D(\tilde{\mathbf H}, \tilde N)$ of a band $\tilde d\times \tilde d$ matrix $\tilde{\mathbf H}$ with $\tilde d=k(2^{\tilde N}-1)$ and the band width $2k-1$:
\begin{align*}
(\bar{ \mathbf A}, \bar{ \mathbf H})&=\mathcal D(\tilde{\mathbf H}, \tilde N).
\end{align*}
This function which essentially corresponds to \ref{alg:DySp} is explained next in a descriptive manner, while all technical details are presented in \ref{app:dyad}. 

Let $\bar{\mathbf B}_{i}$ be the $k\times k$ output from \ref{alg:isgso} with the input $\mathbf I_k$ and $\widetilde{\mathbf H}_{i}$, which is the Gram matrix 
$$
\widetilde{\mathbf H}_{i}=\widetilde{\mathbf H}_{(2i-2)k,(2i-2)k}^{k,k},
$$ 
where $\widetilde{\mathbf H}_{r,s}^{t.u}\stackrel{def}{=}[h_{r+i,s+j}]_{i=1,j=1}^{t,u}$.
We define the columns of $\bar{\mathbf A}$ with indexes in the vector 
$$
\mathcal J=\left((2i-1)k-k+j\right)_{j=1,\dots.k,i=1,\dots,2^{\tilde N-1}}
$$
through
$$
\bar{\mathbf A}_{(2i-2)k,(2i-2)k}^{k,k}=\bar{\mathbf B}_{i},
$$
and zero everywhere else. 
We also set columns and rows of $\bar{\mathbf H}$ with indexes in $\mathcal J$ through
$$
\bar{\mathbf H}_{(2i-2)k,(2i-2)k}^{k,k}=\mathbf I_k
$$
and zero everywhere else. 
This step corresponds to the symmetric GS orthonormalization of the `lowest' level in the dyadic structure.

\begin{algorithm}[t!]

\caption{Generic Hilbert space dyadic algorithm for band Gram matrices}

\label{alg:genalg}

\begin{algorithmic}\vspace{2mm}

\State{\parbox{0.11\textwidth}{ \hspace{-3mm} \bf Input:} \parbox[t]{0.85\textwidth} {
 \hspace{-5mm} A band Gram matrix { $\mathbf{H}$}, with bandwidth { $2k-1$}}\vspace{2mm}}

\State{\parbox{0.11\textwidth}{ \hspace{-3mm} \bf Output:} \parbox[t]{0.85\textwidth}{
 $\mathbf{B}$, $d \times d$ matrix such that $\mathbf I_d=\mathbf B \mathbf H \mathbf B^\top$
 }\vspace{2mm}}
 
$\tilde{\mathbf A}:=\mathbf I_d$; $\tilde{\mathbf H}:=\mathbf H$; $\mathcal I:=\left(1,\dots, d\right)$;\vspace{2mm}\\
\For{  $l$ from $0$ to $N-1$} \vspace{2mm}
\State 
 $(\bar{\mathbf A},\bar{\mathbf H}):=\mathcal D(\tilde{\mathbf H}_{\mathcal I, \mathcal I}, N-l)$;\vspace{2mm}\\
\State
 $\tilde{\mathbf A}_{\cdot,\mathcal I}:=\tilde{\mathbf A}_{\cdot,\mathcal I}\bar{\mathbf A}$;  , $\tilde{\mathbf H}_{\mathcal I, \mathcal I}:=\bar{\mathbf H}$;\vspace{2mm}\\
\State
 $\mathcal I:=\left( \mathcal I_r \right)_{
 r\notin \left\{2^l(2i-1)k-k+j, j=1,\dots.k, i=1,\dots,2^{ N-l-1}\right\}
 }$;\vspace{2mm}\\
 \EndFor \vspace{2mm}\\
 
$\mathbf{B}=\tilde{\mathbf A}$;
\end{algorithmic}
\end{algorithm}

The remaining columns of $\bar{\mathbf A}$ and corresponding entries of $\bar{\mathbf H}$ are obtained through the orthogonalization with respect to the vectors from the `lowest' level.
The technical details of this step are given in \ref{app:dyad}. 
If we delete from the matrix $\bar {\mathbf A}$ the columns with indices in $\mathcal J$, the resulting matrix $\bar {\mathbf A}_0$ have the dimension $\bar d\times \bar d$, where $\bar d= k(2^{\tilde N-1}-1)$.
The corresponding $\bar d \times \bar d$ submatrix of $\bar{\mathbf H}$ (obtained by removing columns and rows with indices in $\mathcal J$) say $\bar{\mathbf H}_0$, is the Gram matrix for the vectors represented in $\bar {\mathbf A}_0$.
It is again a band matrix. 

We can now clarify \ref{eq:scheme} in the form of \ref{alg:genalg}.
An example of the matrix $\mathbf B$ for the $1533 \times 1533$ Gram matrix for the third-order $B$-splines with equally spaced knots (the case of $N=9$ and $k=3$) is shown in \ref{fig:matrices}.

\subsection{General splinet algorithm}
We close this section by presenting the complete algorithm for an arbitrary order and an arbitrary number of knots. 
Considering an order $k$ and a number of knots $n$, there exists an integer $N$ such that
$$
k2^{N-1}-1 < n \leq k2^N-1,
$$ 
namely,
 \begin{align*}
N&=\Big \lceil \frac{\log\left( \frac{n+1}{k}\right)}{\log 2}\Big\rceil,
\end{align*}
where $\lceil \cdot \rceil$ is the standard ceiling operator.
The main idea of a general algorithm is simply submerging the $m\times m$ Gram matrix $\mathbf H$, $m=n+1-k$ of the $B$-splines as the central part of a  dyadic $d \times d$ band matrix $\widetilde{\mathbf H}$, $d=k(2^N-1)$ and the bandwidth $2k-1$. 
For this, let 
\begin{align*}
n_U=\Big \lfloor \frac{d-m}2 \Big \rfloor=\Big \lfloor \frac{k2^N-n-1}2 \Big \rfloor,~~n_D=k2^N-n-1-n_U,
\end{align*}
$\lfloor \cdot \rfloor$ is the standard floor operator, and define an augmented band  matrix $\widetilde{\mathbf{H}}=\left [ \widetilde H_{ij} \right]_{i,j=1}^{d}$ through
\begin{align}
\label{eq:submerge}
\widetilde{{H}}_{ij}=\begin{cases}
1:& i=j, i\le n_U \mbox{ or } i > d-n_D,\\
H_{i'j'}:& i=n_U+i', j=n_U+j', i',j'=1,\dots, n+1-k,\\
0:& \mbox{ otherwise.}
\end{cases}
\end{align} 

\begin{algorithm}[t!]
\caption{General splinet algorithm.}
\label{alg:splinet}
\begin{algorithmic}\vspace{2mm}

\State{\parbox{0.11\textwidth}{ \hspace{-3mm} \bf Input:} \parbox[t]{0.85\textwidth} {
 \hspace{-5mm} $\boldsymbol \xi=(\xi_0,\xi_1,\dots, \xi_n,\xi_{n+1})$ -- knots, $k$ -- order.}\vspace{2mm}}

\State{\parbox{0.11\textwidth}{ \hspace{-3mm} \bf Output:} \parbox[t]{0.85\textwidth}
{The splinet {\small $\mathcal{OS}$ of the $k$th-order built over $\boldsymbol \xi$}.
 }\vspace{2mm}}
 
{\sc Step 1.}
Given the input knots $\boldsymbol{\xi}$ and order $k$, generate B-splines ${B}_{i}$, $i = 1,2,...,n-k+1$, and calculate the Gram matrix $\mathbf{H}$;\; 

{\sc Step 2.} Extend $\mathbf{H}$ to the augmented Gram matrix, $\widetilde{\mathbf{H}}$ through \ref{eq:submerge}, and by \ref{prop:augmentGrammMatrix} obtain $\mathbf P=\left[ P_{ji}\right]_{j,i=1}^{n-k+1}$; \;

{\sc Step 3.}  Obtain the splinet, $\mathcal{OS}$, by orthogonalizing the B-splines $\{ {B}_i \}_{i=1}^{n-k+1}$ through 
$$
OB_i=\sum_{j=1}^{n-k+1} P_{ji}B_j, ~~ i = 1,\dots, n-k+1.
$$
\;
\end{algorithmic}
\end{algorithm}

The augmented band matrix $\widetilde{\mathbf{H}}$ is a well-defined band matrix with a dyadic structure.
We can now formulate the central result for the general splinet algorithm.

\begin{proposition}
\label{prop:augmentGrammMatrix}
Consider an $n$ dimensional band matrix, $\mathbf{H}$, with bandwidth $k$.  
Submerge this matrix into $\widetilde{\mathbf{H}}$ as defined in \ref{eq:submerge}. 
Let $\widetilde {\mathbf P}$ be the output from  \ref{alg:genalg} applied to $\widetilde{\mathbf{H}}$. 
For $\mathbf P=\widetilde {\mathbf P}_{n_U+1,n_U+1}^{d-n_D,d-n_D}$, we have 
 \begin{align}
\label{eq:bign}
\mathbf I_m=\mathbf P^\top \mathbf H \mathbf P. 
\end{align}
\end{proposition}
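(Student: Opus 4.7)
The plan hinges on observing that $\widetilde{\mathbf{H}}$ defined by \ref{eq:submerge} is \emph{block diagonal}: only the diagonal entries in the first $n_U$ and the last $n_D$ rows are nonzero (equal to $1$), the central $m\times m$ block equals $\mathbf{H}$, and every remaining entry is zero by the ``otherwise'' clause. Hence
$$
\widetilde{\mathbf{H}} = \begin{pmatrix} \mathbf{I}_{n_U} & 0 & 0 \\ 0 & \mathbf{H} & 0 \\ 0 & 0 & \mathbf{I}_{n_D} \end{pmatrix}.
$$

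The key step will be an \emph{invariance lemma}: when \ref{alg:genalg} is applied to a Gram matrix that is block diagonal with respect to a partition $\{1,\dots,d\} = \mathcal{I}_1 \cup \mathcal{I}_2 \cup \mathcal{I}_3$ of the index set, the output matrix is also block diagonal with respect to the same partition. I would prove this by induction on the iterations of the for-loop in \ref{alg:genalg}, using that each internal operation (the symmetric Gram--Schmidt of \ref{alg:isgso}, the pairwise symmetrization of \ref{prop:reflect}, and the projection step \ref{eq:recst}) only involves inner products and subtractions of scalar multiples of previously processed vectors. If $r \in \mathcal{I}_a$ and $s \in \mathcal{I}_b$ with $a \ne b$, then by the block structure $\widetilde{H}_{rs} = 0$, so every projection coefficient across blocks vanishes and no cross-block contribution is ever introduced. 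In particular, a $k$-tuplet that straddles a boundary between two blocks has a block-diagonal sub-Gram matrix, and the symmetric Gram--Schmidt acts on each part independently.

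Applying the invariance lemma to the partition $\mathcal{I}_1 = \{1,\dots,n_U\}$, $\mathcal{I}_2 = \{n_U+1,\dots,d-n_D\}$, $\mathcal{I}_3 = \{d-n_D+1,\dots,d\}$, I conclude
$$
\widetilde{\mathbf{P}} = \begin{pmatrix} \mathbf{P}_U & 0 & 0 \\ 0 & \mathbf{P} & 0 \\ 0 & 0 & \mathbf{P}_D \end{pmatrix},
$$
where $\mathbf{P}_U$, $\mathbf{P}$, $\mathbf{P}_D$ are the matrices obtained by running \ref{alg:genalg} on $\mathbf{I}_{n_U}$, $\mathbf{H}$, and $\mathbf{I}_{n_D}$, respectively. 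The correctness of \ref{alg:genalg} gives $\widetilde{\mathbf{P}}^\top \widetilde{\mathbf{H}} \widetilde{\mathbf{P}} = \mathbf{I}_d$, and reading off the central diagonal block yields $\mathbf{P}^\top \mathbf{H} \mathbf{P} = \mathbf{I}_m$. Since the definition of $\mathbf{P}$ as the central submatrix of $\widetilde{\mathbf{P}}$ extracts precisely this block, the claim follows.

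The main obstacle is the careful verification of the invariance lemma for the dyadic step \ref{eq:recst}: unlike the symmetric Gram--Schmidt within a single tuplet, this step orthogonalizes $k$-tuplets at higher support levels against several tuplets at the bottom level, and one must check that these projections also respect the partition. This again reduces to the fact that all involved inner products are entries of the current Gram matrix and hence vanish across blocks; the padding sizes $n_U, n_D$ need not align with the dyadic tuplet boundaries, but the argument is insensitive to such misalignment because it operates purely at the level of inner products.
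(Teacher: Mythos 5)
Your proposal is correct and follows essentially the same route as the paper's proof: both exploit the block-diagonal structure of $\widetilde{\mathbf H}$ and argue that every step of the algorithm (Gram--Schmidt projections, the symmetrization of \ref{prop:reflect}, and the dyadic projections \ref{eq:recst}) uses only inner products, which vanish across blocks, so that $\widetilde{\mathbf P}$ inherits the block structure and the central block can be read off. The only cosmetic difference is that you package this as a general invariance lemma with full block-diagonality of $\widetilde{\mathbf P}$, whereas the paper runs a case analysis on index positions to show the slightly weaker (but sufficient) fact that the central columns of $\widetilde{\mathbf P}$ vanish outside the central rows.
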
 

The proof is given in \ref{sec:pfsal} and the main splinet algorithm is formulated in \ref{alg:splinet}.   
To illustrate the non-dyadic case, in \ref{fig:splinet5}, an orthonormal splinet of the order $k=3$ and with $n=100$ equally spaced knots and thus with $88$ basis functions is shown.

\begin{figure}[htbp]
  \centering
\includegraphics[width=0.9\textwidth]{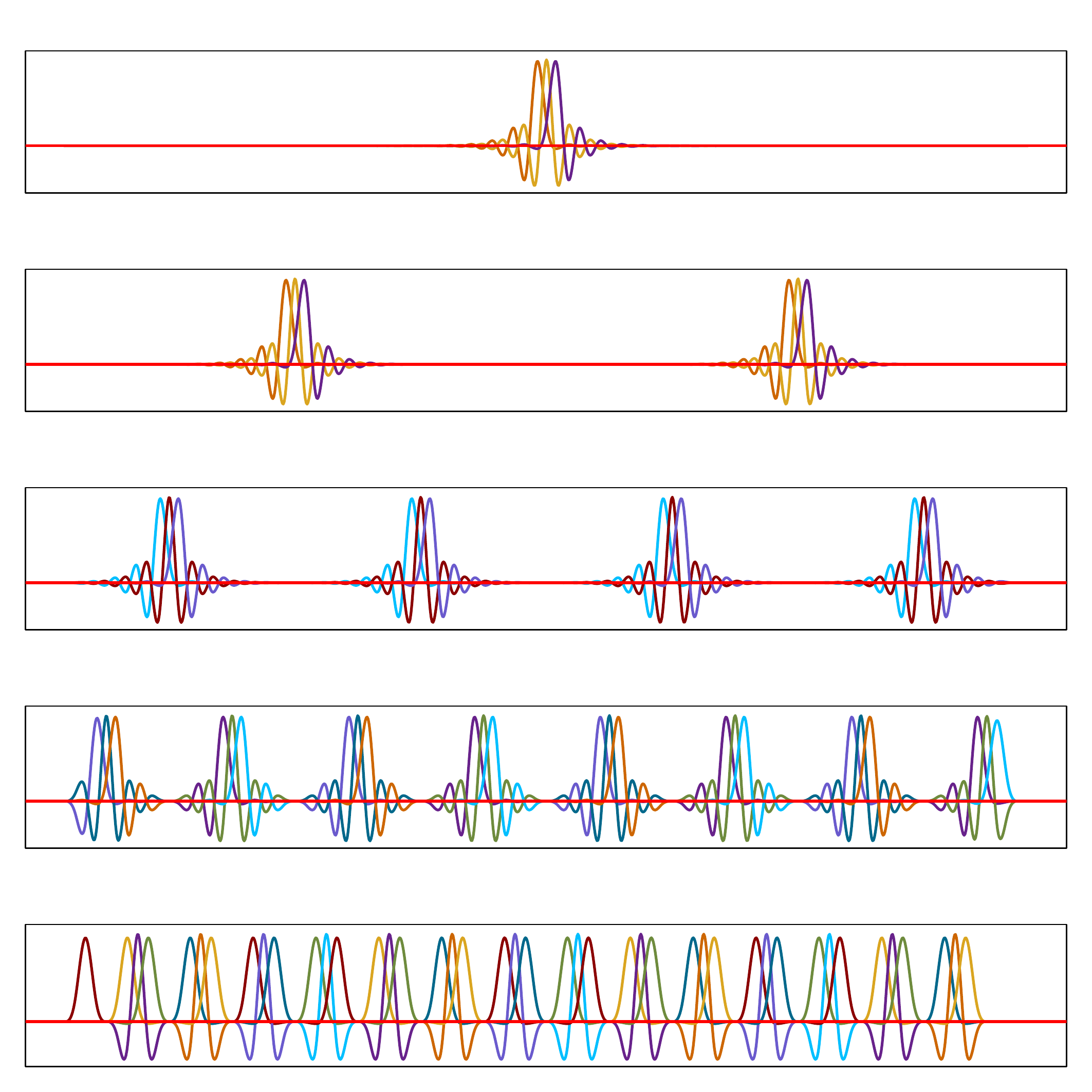}
\caption{  A non-dyadic splinet of the $3^{rd}$ order for 100 knots and with 88 basis functions.  }
 \label{fig:splinet5}
\end{figure}

\section{Efficiency of the splinets}
\label{sec:eff}
The two main features of the splinets are their locality that makes them efficient in the decomposition of a function and their efficiency in their computational evaluations.
In fact, the first feature amplifies the second one since a small support of a spline reduces the computational burden of the inner product evaluations involving such a spline. 
The main reason for the computational efficiencies of the splinets is due to near orthogonality of the $B$-splines. 
If $B$-splines in a pair are not mutually orthogonal it is only due to an, often small, overlap of their supports.  
By the nature of the dyadic net, it allows for containment of the growth of the support in the process of orthogonalization leading to a splinet. 
In the following, we restrict ourselves to discussing the dyadic case since the general splinet algorithm is based on it and its asymptotic efficiency follows the dyadic case.

\subsection{Locality}
It is easy to notice that a basis of the first order $B$-splines has the total support approximately twice the size of the knot range and thus is knot location independent. 
On the other hand, the size of the total support of the constructed first order splinets is always equal to $N-1$ multiplied by the knot range and also does not depend on the location of the knots. 
Indeed, on each support level, the total support of constructed splinets covers the whole range of knots and the conclusion follows from the fact that there are $N-1$ support levels.  
The ratio of the total support of a spline basis over the range of knots is referred to as the {\it relative support} of this basis.

For the one and two-sided symmetric orthogonalizations that constitute alternatives to our approach, the total relative support depends on the location of the knots.
In the equally spaced case is equal, for the one-sided case, to  
$$
\frac{1}2 2^{N}\left(1+1/2^N\left(1-1/2^{N-1}\right)\right).
$$
This follows from the fact that in the one-sided orthogonalization the relative support of subsequent elements in the basis are $2/(n+1), 3/(n+1),\dots, n/(n+1), 1$, where $n=2^N-1$.
Similarly, for the two-sided symmetric orthogonalization and the equal spaced knots case, the relative support is 
$$
\frac 1 4 2^{N}\left(1 -1/2^{N-1}\left(1+1/2^{N-1}\right)\right), 
$$
since if one applies one-sided orthoghonalization from both sides until the midpoint of the interval, then it 
yields two sets of the splines with the individual relative support sequence $2/(n+1),3/(n+1),\dots, (n-1)/2/(n+1)$ and one needs also to account for the final spline in the center having the relative support equal to one. 

Thus while the two-sided orthogonalization is asymptotically 50\% more efficient than the one-sided one, both are on the order of $n=2^N-1$ while the splinet's support is on the order of $\log(n+1)-1=N-1$. 
We conclude the splinets are much more efficient in the terms of the support size and only slightly worse than the non-orthogonalized B-splines which have the total relative support $2/(1+n^{-1})=2(1-2^{-N})$. 

The locality of the splinets can be even further improved if one allows for negligible errors in the orthonormalization. 
Namely, in \ref{alg:DySp0}, after few iterations of the main recursion in {\sc Step 4}, independently on $n$, the change in the form of splines is negligible and they become for all practical reasons orthogonal, see \ref{prop:appr}. 
Thus it would be natural to stop the iterations after some criterion of the accuracy is achieved. 
Thus, if the algorithm is stopped at some $l$, then the resulting splinet is obtained from $\left(\mathcal B_l, \mathcal{OB}_l\right)$ and the total support of the splines in $\mathcal B_l$ is the same as the ones in the top row in $\mathcal{OB}_l$, see also \ref{prop:totsup}.
Furthermore, since the bound for the norm of the difference between orthogonalized and not-fully-orthogonalized elements of the bases does not depend on $n$, the overall total support no longer will depend on $n$, i.e. becomes constant achieving the same (up to a multiplicative constant) asymptotic rate as the one in the original splines. 
We summarize our findings in \ref{tab:comp}. 
\begin{figure}[t!]
  \centering
  \includegraphics[width=0.46\textwidth, height=7cm]{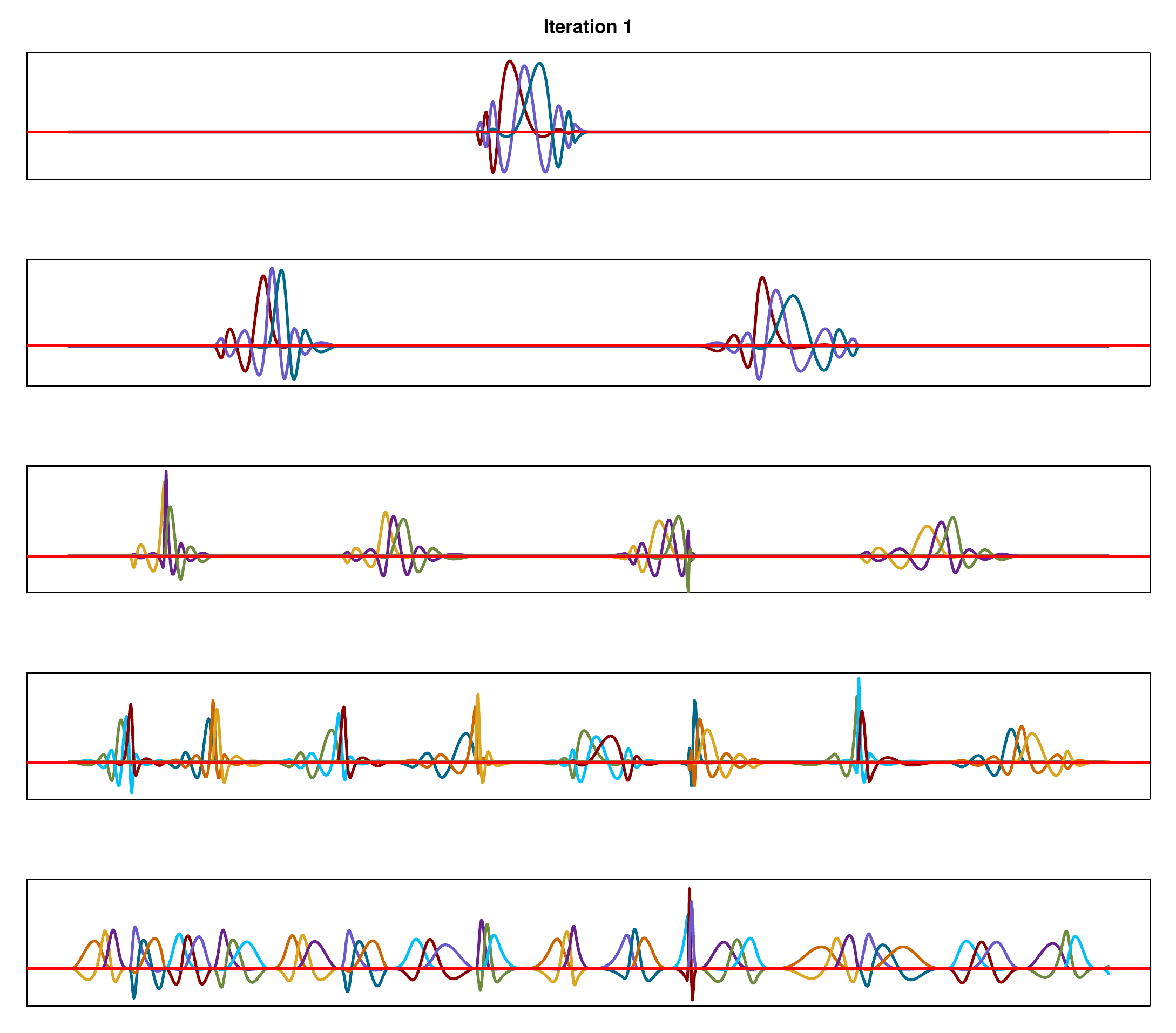} \includegraphics[width=0.46\textwidth,height=7cm]{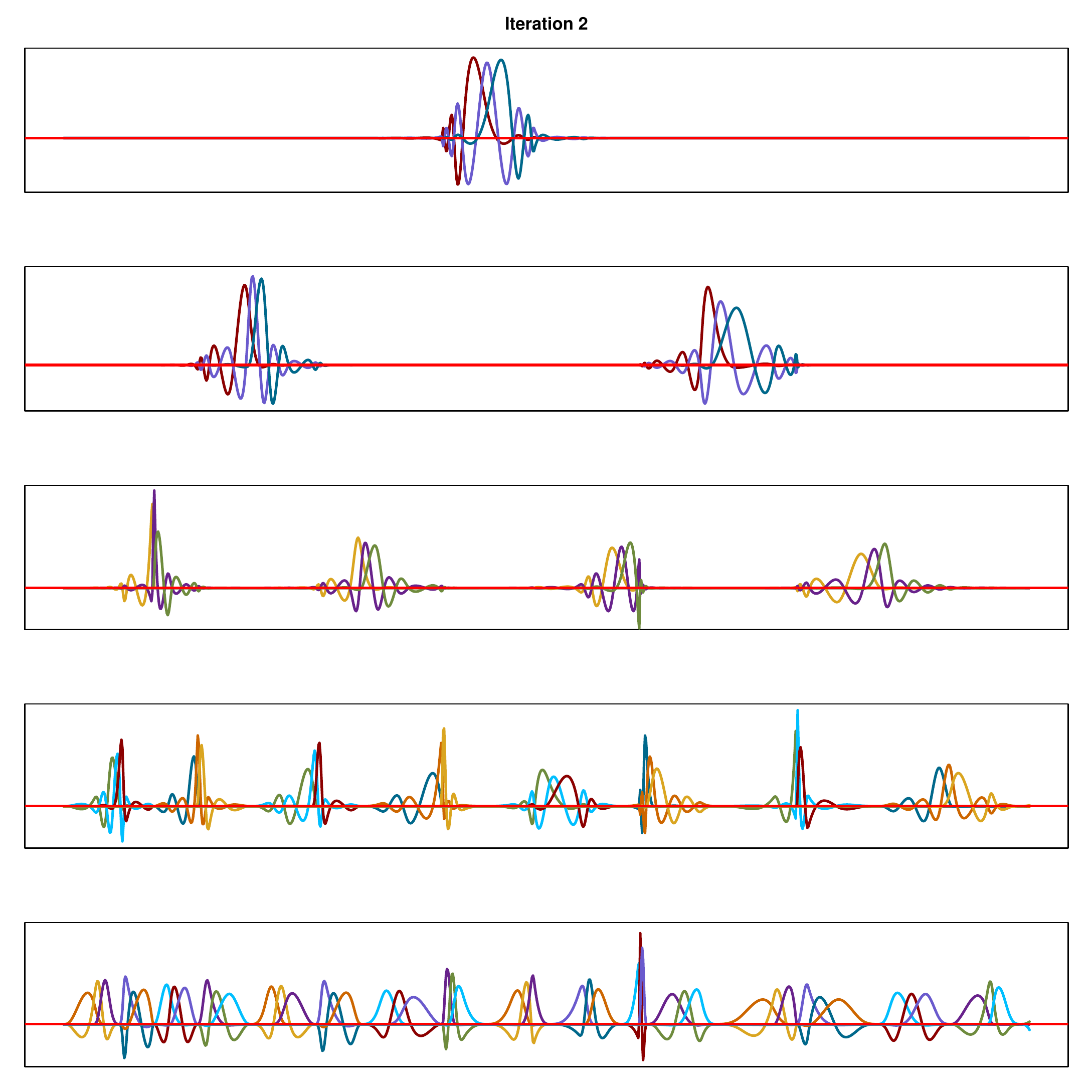}\\
  \includegraphics[width=0.46\textwidth, height=7cm]{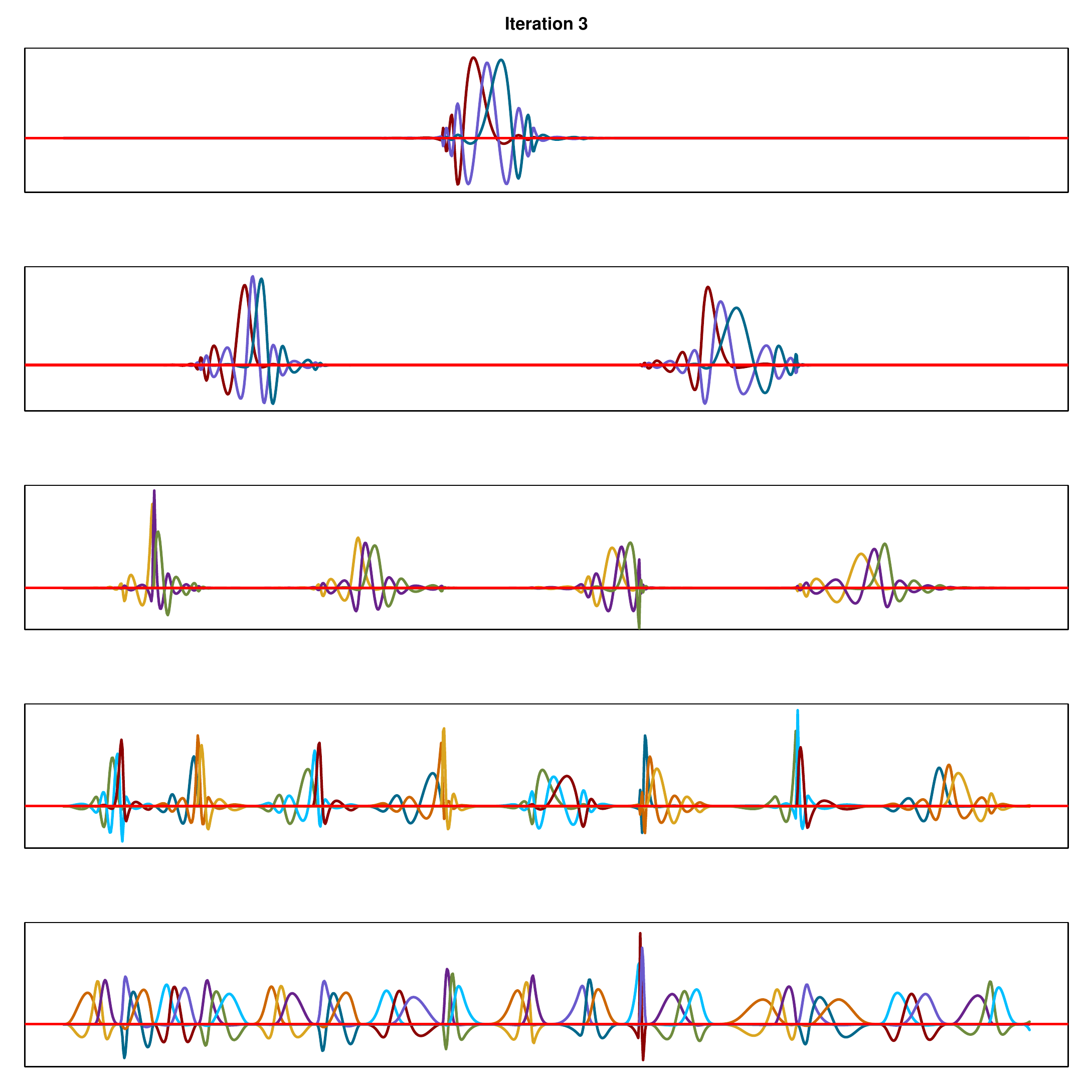} \includegraphics[width=0.46\textwidth, height=7cm]{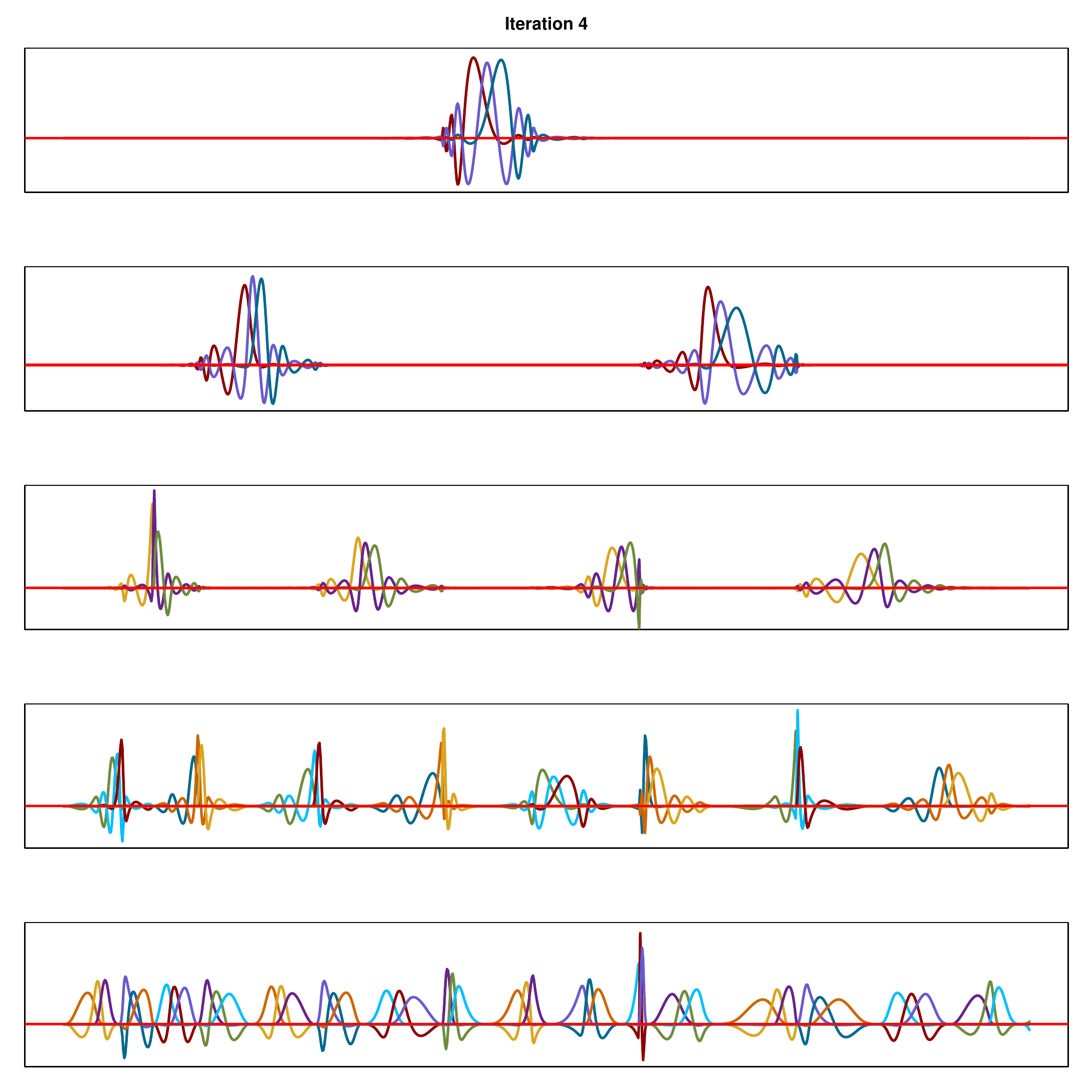}
   \caption{ {\it Top: }{\it (Left): } Splinet resulting from one iteration only of {\sc Step 4} in \ref{alg:DySp0}. {\it (Right):} 
Splinet resulting from two iterations only of {\sc Step 4} in \ref{alg:DySp0}. 
{\it (Bottom): } {\it (Left): } Splinet resulting from three iterations only of {\sc Step 4} in \ref{alg:DySp0}. {\it (Right):}   Splinet resulting from running  \ref{alg:DySp0}.  One can see the differences between the two figures at the bottom are negligible.
}
  \label{fig:illustration}
\end{figure}

%
%
%
%
The following results for the first order splinet provides mathematical foundation for the above claims and can be utilized to formulate  a stopping rule in \ref{alg:DySp0}. 

\begin{proposition}
\label{prop:appr}
Consider a dyadic splinet $\mathcal{B}$ of the first order, with $N$ levels on equally spaced knots and let  $\mathcal {OB}$ be the splinet build from $\mathcal B$, defined in  \ref{subsec:da}
Let $\mathcal{OB}_l=\{OB_{i,l}, i=1,\dots, 2^{N-l-1}\}$ for at a fixed level  $l=0,\dots, N-1$.
Moreover, let ${\mathcal{B}}_l=\{\tilde{B}_{j,r}, j=1,\dots, 2^{N-r-1}, r>l\}$,  be the $B$-splines at support levels above $l$ orthogonalized with respect to  $\mathcal{OB}_l$ as obtained in the $l$-th step of the algorithm and normalized.

Let $h_k$, $k\ge 0$ be defined through the following recurrence $h_0=1/4$, $h_{l+1}=-(2^{2/2}-2^{-1})h_l^2/(1-h_l^2)$.
Then the splines $OB_{i,l+1}$ and $\tilde{B}_{i,l+1}$ satisfy
\begin{equation*}
\|OB_{i,l+1}-\tilde{B}_{i,l+1}\|^2_2= \frac{4h_l^2}{1+\sqrt{1-2h_l^2}}
\le  \frac{8(4-\sqrt{14})}{a^2}\left( \frac a4\right)^{2^{l+1}} ,
\end{equation*}
where
$$
a= 8 \frac{2\sqrt{2}-1}{15}\approx 0.9751611 .
$$
More generally, for $r>l$, we have
\begin{equation*}
\|OB_{i,r}-\tilde{B}_{i,r}\|_2
\le
\frac{2\sqrt{2(4-\sqrt{14})}}{a}
\left( \frac a4\right)^{2^{l}}
\sum_{s=0}^{r-l-1} \left( \frac a4\right)^{2^{s}}
.
\end{equation*}
\end{proposition}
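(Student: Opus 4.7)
The strategy is recursive, tracking how a single scalar invariant---the inner product $h_l$ between the pair of unit splines symmetrically orthonormalized at the $l$-th iteration of the dyadic algorithm---propagates through successive iterations, and then reading off both the equality and the upper bound from it. The base case is an explicit calculation on equally spaced first-order $B$-splines, giving $h_0 = \langle B_i, B_{i+1}\rangle/(\|B_i\|\,\|B_{i+1}\|) = (1/6)/(2/3) = 1/4$. For the inductive step, I would write the adjacent pair at iteration $l+1$ as the projections of the level-$(l+1)$ $B$-splines onto the orthogonal complement of $\mathcal{OB}_l$; the self-similar dyadic geometry (only two common $OB$-splines at level $l$ contribute to each projection), combined with the explicit coefficients of \ref{prop:reflect} and the identity $\|x\pm y\|^2 = 2(1\pm h_l)$ for a normalized pair with inner product $h_l$, would give, after algebraic simplification, the stated rational recurrence for $h_{l+1}$, the minus sign reflecting that each iteration negates the off-diagonal term.

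The equality $\|OB_{i,l+1}-\tilde{B}_{i,l+1}\|_2^2 = 4h_l^2/(1+\sqrt{1-2h_l^2})$ would then follow by applying \ref{prop:reflect} to the input pair: one computes $\langle OB_{i,l+1},\tilde{B}_{i,l+1}\rangle$ from the closed-form expression of the symmetrized pair and uses $\|u-v\|^2 = 2-2\langle u,v\rangle$ for unit vectors, rationalizing via the identity $1-\sqrt{1-2h^2} = 2h^2/(1+\sqrt{1-2h^2})$. For the a priori bound, I would prove by induction that $|h_l|\le C(a/4)^{2^l}$, with $a = 8(2\sqrt{2}-1)/15$ pinned down by requiring the recurrence to propagate the estimate tightly at $l=0$; the prefactor $8(4-\sqrt{14})/a^2$ in the final bound then absorbs $C^2$ together with a uniform upper bound on $4/(1+\sqrt{1-2h_l^2})$ obtained using $|h_l|\le 1/4$. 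For the general $r > l$ statement, I would telescope $\tilde{B}_{i,r}$ through its intermediate states after successive iterations, $\tilde{B}_{i,r}^{(l)},\tilde{B}_{i,r}^{(l+1)},\ldots,\tilde{B}_{i,r}^{(r-1)}=OB_{i,r}$, bound each consecutive difference by the single-step estimate applied at the appropriate level (with $h_s$ in place of $h_l$), and sum via the triangle inequality to obtain the geometric-type sum $\sum_{s=0}^{r-l-1}(a/4)^{2^s}$.

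The main obstacle will be the first part: deriving the clean rational recurrence for $h_l$. The cancellations that reduce the symmetric-orthonormalization coefficients, which involve $\sqrt{1\pm h_l}$, to a tidy rational expression in $h_l^2/(1-h_l^2)$ depend on very specific features of the equally-spaced first-order geometry, and tracking them through the dyadic hierarchy requires careful bookkeeping of the constants introduced by the recursive projections. By contrast, the exact equality from \ref{prop:reflect}, the inductive bound on the recurrence, and the telescoping argument for general $r$ are all essentially mechanical once the recurrence is in hand.
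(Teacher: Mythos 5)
Your proposal follows essentially the same route as the paper's proof: a scalar recurrence for the inner product $h_l$ between adjacent splines after the $l$-th orthogonalization step (base case $h_0=1/4$ for adjacent normalized hat functions), the exact equality via $\|u-v\|_2^2=2-2\langle u,v\rangle$ together with the rationalization $1-\sqrt{1-2h^2}=2h^2/(1+\sqrt{1-2h^2})$, an inductive doubly-exponential bound $|h_l|\le a^{-1}(a/4)^{2^l}$ pinning down $a$, and a triangle-inequality telescope through intermediate partially orthogonalized states for $r>l$. One correction of attribution: for first-order splines each $k$-tuplet is a singleton, so \ref{prop:reflect} and its $\sqrt{1\pm h}$ coefficients play no role here; the recurrence $h_{l+1}=-(\sqrt{2}-\tfrac12)\,h_l^2/(1-h_l^2)$ instead falls out of the Gram--Schmidt projection of a level-$(l+1)$ spline onto the two adjacent level-$l$ orthonormal splines followed by renormalization (the paper's \ref{eq:gplus}), which is where the $\sqrt{1-h_l^2}$ denominator and the coefficient $-\sqrt{2}+\tfrac12$ actually originate.
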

The proof of this proposition is given in \ref{sec:pfsal}. 
We note a very rapid decay of the error in using not fully orthogonalized basis. 
It leads to the following  upper bound for the total error of using the approximately orthogonalized basis. 

\begin{corollary}
\label{cor:totb}
Suppose that $y$ is an arbitrary spline of the first order over equidistant knots. 
For each $l=0,\dots, N-1$,  the partially orthogonalized linear basis made of orthonormalized splines in  $\mathcal{OB}_r$, $r\le l$ and the normalized splines $\tilde{ B}_{i,r}$, $i=1,\dots, 2^{N-r-1}$ from $\mathcal{ B}^r$ for $r > l$.
Then
\begin{align*}
\left \|
y-\sum_{r=0}^{l} \sum_{i=1}^{2^{N-r-1}} 
 \langle y, OB_{i,r}\rangle OB_{i,r} 
-\sum_{r=l+1}^{N-1} \sum_{i=1}^{2^{N-r-1}} 
\langle y, \tilde{B}_{i,r} \rangle \tilde{B}_{i,r} 
\right \|^2_2 \le 
K_{N,l} 
\left( \frac a4\right)^{2^{l}}
\|y\|_2,
\end{align*}
where 
$$
K_{N,l}=\frac{2^{N+1}\sqrt{2(4-\sqrt{14})}}{a} \sum_{r=l+1}^{N-1} \frac{\sum_{s=0}^{r-l-1} \left(  a/4\right)^{2^{s}}}{2^{r+1}} 
$$ 
with $a$ as in \ref{prop:appr}.
\end{corollary}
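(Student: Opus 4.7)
The plan is to reduce the error to a perturbation sum over the upper levels by comparing to the exact splinet expansion. Since the full splinet $\mathcal{OB}$ is an orthonormal basis of the spline space containing $y$, one has $y=\sum_{r=0}^{N-1}\sum_{i=1}^{2^{N-r-1}}\langle y,OB_{i,r}\rangle OB_{i,r}$. The contributions at levels $r\le l$ coincide in both expansions, so the residual under consideration is precisely
\[
\sum_{r=l+1}^{N-1}\sum_{i=1}^{2^{N-r-1}}\bigl[\langle y,OB_{i,r}\rangle OB_{i,r}-\langle y,\tilde{B}_{i,r}\rangle \tilde{B}_{i,r}\bigr].
\]

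For each pair I would apply the splitting
\[
\langle y,OB_{i,r}\rangle OB_{i,r}-\langle y,\tilde{B}_{i,r}\rangle \tilde{B}_{i,r}=\langle y,OB_{i,r}-\tilde{B}_{i,r}\rangle OB_{i,r}+\langle y,\tilde{B}_{i,r}\rangle(OB_{i,r}-\tilde{B}_{i,r}),
\]
and invoke Cauchy--Schwarz together with the normalizations $\|OB_{i,r}\|_2=\|\tilde{B}_{i,r}\|_2=1$ to bound the $L_2$ norm of each summand by $2\|y\|_2\,\|OB_{i,r}-\tilde{B}_{i,r}\|_2$. Substituting the estimate supplied by \ref{prop:appr} then yields a bound on each pair that is independent of $i$ and decays doubly exponentially in $l$.

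The final step is bookkeeping: after a triangle inequality, I would sum over $i$ (producing a factor $2^{N-r-1}$), then over $r=l+1,\dots,N-1$, and pull the common factor $(a/4)^{2^l}$ outside. Matching $2^{N-r-1}$ against the $2^{r+1}$ denominator in the statement lines the residual series up with the definition of $K_{N,l}$, up to the numerical prefactor $\sqrt{2(4-\sqrt{14})}/a$. The main obstacle is therefore not analytic but arithmetic: carefully tracking constants through the nested sum and verifying that the accumulated prefactor agrees exactly with $K_{N,l}$. The substantive quantitative inequality, namely doubly-exponential decay of $\|OB_{i,r}-\tilde{B}_{i,r}\|_2$ with $l$, has already been established in \ref{prop:appr}, so no further analytic work is needed beyond this manipulation.
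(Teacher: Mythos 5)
Your proposal matches the paper's own proof essentially step for step: cancel the levels $r\le l$ against the exact splinet expansion of $y$, split each pair $\langle y,OB_{i,r}\rangle OB_{i,r}-\langle y,\tilde{B}_{i,r}\rangle\tilde{B}_{i,r}$ into two terms each controlled by Cauchy--Schwarz and the unit norms, giving $2\|y\|_2\|OB_{i,r}-\tilde{B}_{i,r}\|_2$ per summand, and then sum the bound from \ref{prop:appr} over $i$ and $r$. The only difference is the (equivalent) arrangement of the two-term splitting, so the argument is correct and not a genuinely different route.
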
 

\begin{remark}
From the previous proposition, one can note that the approximation is improving rapidly with an increase of $l$ due to the term $\left( a/4\right)^{2^{l}}$. 
Thus typically it is natural to stop the iterations after few steps, i.e if we stop after the 3rd iteration, then this term is $1.25e-05$ while after the 5th iteration it is $2.42e-20$.
However, the total error of using the approximately orthogonalized basis not only depends on the error due to each non-orthogonalized basis spline but also on the total number of elements of the basis, i.e. on $n$. 
Thus one should pre-determine the number of iterations through \ref{cor:totb} in order to achieve a desired uniform level of accuracy.
\end {remark}

\begin{table}[t!]
\begin{center}
\resizebox{\columnwidth}{!}{
\begin{tabular}{l  c c c c c} \toprule
\footnotesize \bf Spline basis type & \footnotesize \bf $B$-splines & \footnotesize \bf Gram-Schmidt &\footnotesize \bf symmetric ON &\footnotesize \bf splinet  & \footnotesize  \bf PO splinet  \\ \midrule
\footnotesize Relative support size & $2$ & $n/2$ & $n/4$ & $\log n/\log 2$ & \footnotesize const  \\
\footnotesize Orthogonalization &\footnotesize No & \footnotesize Yes & \footnotesize  Yes & \footnotesize Yes & \footnotesize Approx.  \\ \bottomrule
\vspace{1mm}
\end{tabular}
}
\end{center}
\caption{Comparison of the total support sizes for different spline bases. A symmetric ON basis is obtained through two-sided orthogonalization of \ref{subsec:symGS}, a PO splinet is obtained by stopping  {\sc Step 4} in \ref{alg:DySp0} at $n$ independent $l$, see also \ref{eq:stopsp}.}
\label{tab:comp}
\end{table}

The results discussed for the first order splinets can be generalized to an arbitrary order. 
Indeed, there are $N=(\log(n+1)-\log k)/\log 2$ levels and at each level, the total support of the  splinet is always equal to $k$ times the range of knots.  
If the orthogonalization recurrence in {\sc Step 4} of \ref{alg:DySp0} is stopped at some sufficiently large $l=0,\dots, N-1$, then the resulting nearly orthonormal basis is obtained as 
\begin{align}
\label{eq:stopsp}
\mathcal{OS}_l&=\left(\mathcal{B}_l , \mathcal{OB}_l\right),
\end{align}
where $ \mathcal{OB}_l$ is first $l$ rows of a complete orthogonalization of the $B$-splines as described in \ref{eq:re} carrying with itself the total support size $lk$ times the range of knots, while $\mathcal{B}_l$ is obtained from $\mathcal D({\mathcal B}_{l-1})$ given through \ref{eq:re} by normalization and having $N-l-1$ rows by applying the symmetrized orthonormalization $\mathcal G$ to each $k$-tuplet in ${\mathcal B}_l$. 
It is easy to notice that $\mathcal{B}_l$ have the total support not bigger than the top row of $ \mathcal{OB}_l$, i.e. $k$ times the range of knots. 
Thus we have the following result.
\begin{proposition}
\label{prop:totsup}
The splinet $\mathcal{OS}$ of order $k$ defined over a dyadic set of knots $\mathbf \xi=(\xi_0,\dots,\xi_{n+1})$, where $n=k 2^N-1$ for $N\ge 0$ has the relative size of the total support independent of the location of knots and equal to 
$$
k \frac{\log(n+1)-\log k}{\log 2}.
$$ 

Moreover, the partially orthonormalized splinet  $ \mathcal{OB}_l$ given in (\ref{eq:stopsp}) has the relative total support no bigger than
$
k(l+2),
$
thus if the stopping value of $l$ does not depend on $n$, then the relative support size, asymptotically in $n$, is constant. 
\end{proposition}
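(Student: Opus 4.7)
The plan is to reduce both claims to a combinatorial count on the dyadic partition of knot indices, once a single support-containment property has been established. The key lemma I would prove, by induction on the iteration index of Algorithm \ref{alg:DySp0}, is that every $OB$-spline in the tuplet $\mathbf{OB}_{i,l}$ has support contained in the dyadic interval $I_{i,l}$. The base case at level $0$ is immediate because the symmetric orthonormalization $\mathcal{G}$ acts within the linear span of the $k$ original $B$-splines of $\bar{\mathbf{B}}_{i,0}$, whose joint support is $I_{i,0}$. For the inductive step, the projection operation $\mathbf{D}$ in \ref{eq:recst} subtracts only components proportional to lower-level $OB$-splines whose supports lie inside $I_{i,l}$ by the dyadic nesting of the $I$-intervals, and the subsequent within-tuplet $\mathcal{G}$ does not enlarge supports.

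Given this containment, the first claim is a direct count. By \ref{eq:knots1}, for each level $l$ the intervals $\{I_{i,l}\}_{i=1}^{2^{N-l-1}}$ form a contiguous partition of the full knot range $[\xi_0,\xi_{k2^N}]$. Each tuplet at level $l$ carries $k$ $OB$-splines supported in the same $I_{i,l}$, so summing over the $2^{N-l-1}$ tuplets gives
\[
k \cdot 2^{N-l-1} \cdot \frac{|I_{i,l}|}{\xi_{k2^N}-\xi_0} \;=\; k \cdot 2^{N-l-1} \cdot 2^{l+1-N} \;=\; k,
\]
a quantity determined entirely by knot indices and hence independent of the actual knot positions. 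Summing over the $N$ levels yields the stated total $kN = k(\log(n+1)-\log k)/\log 2$.

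For the partial splinet $\mathcal{OS}_l$, the completely orthonormalized lower block $\mathcal{OB}_l$ consists of $l+1$ rows (levels $0,\ldots,l$) and contributes $k(l+1)$ by the same counting argument. For the remaining block $\mathcal{B}_l$ of symmetrically orthonormalized $k$-tuplets at higher levels, the support-containment property still holds (each element at level $r>l$ lies in $I_{i,r}$); combined with the fact that the prior $\mathbf{D}$-projections confine these residuals to the dyadic refinement structure of the topmost completed row, the total contribution of $\mathcal{B}_l$ is bounded by one additional unit of $k$ times the range. Adding the two contributions gives $k(l+1)+k = k(l+2)$, which is independent of $N$ when $l$ is held fixed, so the asymptotic-in-$n$ constancy follows.

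The hard part will be making the $\mathcal{B}_l$ bound rigorous: a naive per-row count would overshoot to $k(N-l-1)$, and the correct bound requires exploiting that the residuals populating $\mathcal{B}_l$'s upper rows are effectively organized into the partition of the topmost orthonormalized row, so that their supports collapse onto a single row's worth of the dyadic hierarchy. Careful bookkeeping through \ref{eq:recst}--\ref{eq:re} is needed to justify this collapse, and is the only non-routine step of the argument.
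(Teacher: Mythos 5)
Your treatment of the first claim is correct and is essentially the argument the paper intends: the support-containment lemma (each element of $\mathbf{OB}_{i,l}$ is supported in $I_{i,l}$, proved by induction through the projections in \ref{eq:recst} and the fact that $\mathcal G$ acts within the span of a tuplet) is exactly what makes precise the paper's bare assertion that each of the $N$ support levels contributes $k$ times the knot range, and the partition property of $\{I_{i,l}\}_i$ is what gives knot-location independence. One small caution: your displayed computation with the factor $2^{l+1-N}$ treats all $|I_{i,l}|$ as equal, which holds only for equispaced knots; the correct step is to sum $|I_{i,l}|$ over $i$ to the full range, which your own "contiguous partition" remark already supplies.

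The gap is in the second claim, and it sits precisely at the step you flag as the only non-routine one: the bound on the total support of the residual block $\mathcal{B}_l$. The mechanism you propose --- that the residuals "collapse onto a single row's worth" so that $\mathcal{B}_l$ contributes at most $k$ --- is not only left unproven but is quantitatively not the right statement. What \ref{eq:recst} actually gives is the following: after rounds $j=0,\dots,l$ are completed, a residual spline at level $r>l$ has had subtracted from it its projections onto the two level-$j$ $OB$-tuplets straddling its centre, for each $j\le l$, so its support is contained in the union of these, namely the $2^{l+2}k$ inter-knot-interval neighbourhood of its centre (twice a level-$l$ dyadic interval), uniformly in $r$. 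Since the level-$r$ centres are spaced $2^{r+1}k$ intervals apart, level $r$ of $\mathcal{B}_l$ contributes at most $k\,2^{l+1-r}$ times the range, and summing the geometric series over $r=l+1,\dots,N-1$ gives $k\bigl(2-2^{-(N-l-2)}\bigr)<2k$ --- not $k$, and in general strictly larger than $k$ as soon as there is more than one row above level $l$. This still does not depend on $n$, so the asymptotic constancy you want survives, and combined with the orthonormalized block it yields a bound of the form $k(l+\mathrm{const})$ (the paper's own accounting here is itself loose by a unit between the row count in \ref{eq:re} and the statement). But as written your argument neither proves the claimed $\le k$ collapse nor replaces it by the correct uniform-neighbourhood-plus-geometric-series estimate, so the second claim is not established.
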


\begin{remark}
The results in \ref{prop:appr} for the first order splinets and equally spaced knots can be generalized to arbitrary order and not equally spaced knots.
However, the technicality of results will be quite increased and in the consequences not that much different from the first order and equally spaced case.  
We will numerically check in what follows that this is really the case. In the example we consider splines of order three see \ref {fig:illustration}, and \ref{tab:norm3} displays the norm of the error versus the number of iterations. The norm of the error measures the difference between each spline  in splinet (full iterations) and with its corresponding spline resulting from a specific number of iterations of Step 4 in \ref{alg:DySp0}, i.e. $\|\epsilon _{i,r}\|^2 =\| OB_{i,r} - \tilde{B}_{i,r}\|^2$.
We see that the third-order case produces even smaller error than the first-order case, which suggests that, in numerical implementations, one can use \ref{cor:totb} as a conservative stopping rule.  
One could investigate the approximation error for the general case  even further by using the two sided symmetric Gram-Schmidt procedure as formed in the proofs in \ref{sec:pfsal} but this does not seem to have practically important consequences. 
\end {remark}

\begin{table}[t!]
\label{tab:norm3}
\begin{center}
\begin{tabular}{l  c c c} 
\toprule
Number of itterations   & \bf  $\|\epsilon _{1,0}^j\|^2 $ & \bf  $\|\epsilon _{2,0}^j\|^2$ & \bf  $\|\epsilon _{3,0}^j\|^2$  \\ \midrule
$j=1$  & 2.90e-06 & 1.43e-05 & 6.60e-06 \\
$j=2$  & 1.71e-09 & 7.21e-09 & 3.50e-09 \\
$j=3$  & 1.78e-16 & 5.29e-16 & 1.94e-16 \\
\bottomrule \vspace{1mm}
\end{tabular}
\end{center}
\caption{Comparison between the norm of the error  in a splinet when the numbers of  iteration  of Step 4 in \ref{alg:DySp0} are one, two and three iterations. The norm of the error when having three iterations is by any count negligible.  }
\end{table}
\subsection{Computational efficiency}
One can measure computational efficiency by counting how many inner products one has to evaluate in the orthonormalization process.
For the classical GS orthogonalization of $n$ vectors, one has to evaluate $1+2+\dots + n-1=n(n-1)/2$ inner products. 
However in the case of the $B$-splines of the first order, if one goes for the one-sided orthogonalization which is based on the GS method, this number is reduced because each next spline has to be orthogonalized only with respect to the previous one since all other are already orthogonalized due to having disjoint support with the one currently orthogonalized. 
Thus the total number of the inner products that need to be evaluated is $n-1$.  
 
For a dyadic splinet of the first order, with $n=2^N-1$, the $B$-splines at the smallest support level are already orthogonalized. 
There are $2^{N-1}$ of them. 
Remaining $2^{N-1}-1$ splines over $N-1$ rows have to be orthogonalized, each one of them with respect to two splines from the bottom $N^{th}$ row as in the first run of the loop in {\sc Step 4} of \ref{alg:DySp0}.
Thus one has to evaluate $2^N-2$ inner products in the first run of the loop, then $2^{N-1}-2$ in the second and so on for each row in the dyadic structure until the top row is reached. 
Thus the total number of inner product evaluations is 
$$
\sum_{j=1}^{N-1} 2(2^k-1)=2^{N}-2 -2N.
$$
which is on the order of $n=2^N-1$, i.e. the same as in the GS procedure applied to the $B$-splines although it is better as it is reduced by $2\log(n+1)/\log 2$. 

This efficiency is preserved for any order of the splines as presented in the next result. The proof can be found in \ref{sec:pfsal}. 
\begin{proposition}
\label{prop:compeff}
Consider the dyadic structure case for the $B$-splines of order $k$. 
Then the one-sided orthogonalization requires evaluation of 
$$
J^1_n=nk-3 k^2/2 +k/2
$$ 
inner products, while the corresponding number for the splinet is
$$
J^2_n=\frac{5k-1}4 n -\frac{2k^2}{\log 2}\log(n+1) + \frac94 k-3k^2+2k^2\frac{\log k}{\log 2}.
$$
 \end{proposition}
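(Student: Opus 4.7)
The proposition is essentially a bookkeeping exercise: count the non-zero inner product evaluations executed in each orthogonalization procedure. Both counts rely on the band structure of the $B$-spline Gram matrix (bandwidth $2k-1$), which forces $\langle B_{l,k},B_{l',k}\rangle = 0$ whenever $|l-l'|>k$.

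For $J^1_n$, I would apply Algorithm 1 to the $m=n+1-k$ $B$-splines $B_{0,k},\dots,B_{m-1,k}$. Since $OB_j$ lies in the span of $B_{0,k},\dots,B_{j,k}$, it inherits the support $[\xi_0,\xi_{j+k+1}]$, so the projection $\langle B_{i,k},OB_j\rangle$ can be nonzero only when $j\ge i-k$. The $i$-th step therefore evaluates exactly $\min(i,k)$ inner products. Summing
$$
J^1_n=\sum_{i=0}^{m-1}\min(i,k)=\binom{k}{2}+(m-k)k
$$
and substituting $m=n+1-k$ gives the stated $J^1_n=nk-3k^2/2+k/2$.

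For $J^2_n$, I would trace the main loop of Algorithm 4, which invokes the recursive step of Algorithm 3 exactly $N$ times, the input dyadic structure losing one row at each iteration. At outer iteration $l\in\{0,\dots,N-1\}$ there are $\tilde N=N-l$ rows in the current structure, and two kinds of cost are incurred. First, the symmetric Gram--Schmidt operator $\mathcal G$ of \ref{prop:gssym} and \ref{cor:c2} (implemented in Algorithm 2) is applied to each of the $2^{\tilde N-1}$ bottom-row $k$-tuplets at a fixed per-tuplet cost $s_k$ of inner products, obtained from two passes of Algorithm 1 on $k$ mutually overlapping vectors together with the pairwise symmetrizations of \ref{prop:reflect}. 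Second, each of the $2^{\tilde N-1}-1$ higher-row $k$-tuplets is, per \ref{eq:recst}, orthogonalized against the two adjacent just-orthonormalized bottom $k$-tuplets, contributing some fixed cost $c_k$ of inner products per tuplet. Summing
$$
J^2_n=\sum_{l=0}^{N-1}\Bigl(s_k\cdot 2^{\tilde N-1}+c_k\cdot(2^{\tilde N-1}-1)\Bigr)+\text{end corrections},
$$
the geometric part collapses into a constant multiple of $2^N=(n+1)/k$, producing the linear piece $\tfrac{5k-1}{4}n$; the arithmetic part $-c_k\sum_{l=0}^{N-1}1=-c_kN$ contributes $-\tfrac{2k^2}{\log 2}\log\tfrac{n+1}{k}$, which by splitting the logarithm yields both the $-\tfrac{2k^2}{\log 2}\log(n+1)$ term and the constant $+\tfrac{2k^2\log k}{\log 2}$.

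The hard part will be pinning down the exact constants $s_k$ and $c_k$ (together with the small end-correction terms that become $\tfrac{9}{4}k-3k^2$). The value of $c_k$ requires careful inclusion--exclusion on which of the $k$ splines in a higher tuplet actually overlap each of the $k$ orthonormalized splines in the two adjacent bottom tuplets: the splines within a tuplet have mutually shifted supports, so not all $2k\cdot k$ inner products suggested by a naive upper bound are nonzero. The value of $s_k$ must combine (i) two applications of Algorithm 1 inside a tuplet, each costing $\binom{k}{2}$ on mutually overlapping vectors, with (ii) the $\lfloor k/2\rfloor$ pairwise symmetrizations of \ref{prop:reflect}, each contributing one further inner product. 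Once $s_k$ and $c_k$ are fixed, the closed form follows by evaluating elementary geometric and arithmetic sums and verifying the constant terms against small cases ($k=1,2,3$ with small $N$).
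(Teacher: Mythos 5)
Your count of $J^1_n$ is complete and is essentially the paper's own argument: the paper splits the $n-k+1$ splines into the first $k+1$ (costing $0,1,\dots,k$ projections, by the band structure) and the remaining $n-2k$ (costing $k$ each), which is the same bookkeeping as your $\sum_i\min(i,k)$ and yields the same closed form.

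For $J^2_n$ you reproduce the paper's skeleton correctly --- the total is a sum over the shrinking dyadic structures of (number of bottom-row tuplets) times a per-tuplet symmetric-orthonormalization cost $s_k$ plus (number of higher tuplets) times a per-tuplet projection cost $c_k$, the geometric part collapsing to a multiple of $2^N=(n+1)/k$ and the arithmetic part $-c_k\cdot(\text{number of iterations})$ giving the logarithmic and $\log k$ terms. But the entire quantitative content of the proposition lies in the two constants, and you leave both undetermined; moreover, the candidate values you sketch are not the ones that produce the stated formula. The paper charges exactly $s_k=k(k-1)/2$ inner products for the symmetric orthonormalization of a bottom-row $k$-tuplet --- i.e.\ the same count as a single Gram--Schmidt pass on $k$ mutually overlapping vectors; it does not add a second GS pass nor extra inner products for the $\lfloor k/2\rfloor$ pairwise symmetrizations, whose inputs are taken as already available --- and exactly $c_k=2k^2$ for each higher tuplet, i.e.\ all $k$ of its splines against all $2k$ orthonormalized splines of the two adjacent bottom tuplets, with no inclusion--exclusion reduction for non-overlapping supports. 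Only the combination $s_k+c_k=(5k^2-k)/2$ makes the geometric sum produce the factor $5k-1$ appearing in $J^2_n$, and only $c_k=2k^2$ produces the coefficient $2k^2/\log 2$ of $\log(n+1)$; your tentative $s_k=2\binom{k}{2}+\lfloor k/2\rfloor$ and a reduced $c_k<2k^2$ would give a different closed form. Until these constants are pinned down and shown to equal the paper's values, the proposal does not establish the claimed expression for $J^2_n$.
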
 

\newpage

\begin{center}
\vspace*{2cm} \noindent {\bf Supplementary Materials: Splinets -- orthogonalization of the $B$-splines}\\
\vspace{1cm} \noindent {\sc Xijia Liu$^\dagger$, Hiba Nassar$^\ddagger$, Krzysztof Podg\'{o}rski}$^\ddagger$\\

\end{center}

\section{Proofs and auxiliary results} \label{sec:pfsal}
In this section, we collect some proofs of the results across the paper and auxiliary results that are used in the proofs and some arguments in the main text. 
\begin{proof}[Proof of \ref{prop:dersp}]
To see the above, we notice that \ref{eq:indi} coincides with \ref{eq:recder} in the case of $i=0$ (the undefined term ${d^{-1}B_{l,k-1}^{\boldsymbol \xi}}/{dx^{-1}}$ can be neglected since it is multiplied by $0$, so one can define it, for example, equal to zero). 
For the first derivative, i.e. $i=1$, we have
\begin{multline*}
\frac{dB_{l,k}^{\boldsymbol \xi}}{dx}(x)
=
\frac{1}{\xi_{l+k+1}-\xi_l}B_{l,k-1}^{\boldsymbol \xi}(x)+\frac{ x- {\xi_{l}}
  }{
  {\xi_{l+k+1}}-{\xi_{l}} 
  } 
\frac{dB_{l,k-1}^{\boldsymbol \xi}}{dx}(x)
+\\
+
\frac{1}{\xi_{l+1}-\xi_{l+k+1}}B_{l+1,k-1}^{\boldsymbol \xi}(x)+\frac{ {\xi_{l+k+1}-x}
  }{
  {\xi_{l+k+1}}-{\xi_{l+1}} 
  } 
\frac{dB_{l+1,k-1}^{\boldsymbol \xi}}{dx}(x).
\end{multline*}
We note that if $k=1$, then ${dB_{l+1,k-1}^{\boldsymbol \xi}}/{dx}\equiv 0$.
Then a simple induction argument leads to  \ref{eq:recder}. 
\end{proof}
\begin{proof}[Proof of \ref{prop:equivalence}]
The first part is a rather obvious consequence of that the definition of $\widetilde{B}_{K+i,K}$, $i=0,\dots, n-K$ is not dependent on the location of  the external knots $\widetilde{\xi}_{0}\le \dots \le \widetilde{\xi}_{K-1}$ and $\widetilde{\xi}_{K+n+2}\le \dots = \widetilde{\xi}_{2K+n+1}$.
A simple proof of this can be obtained through mathematical induction on the order of spline and is omitted. 

The second part can be argued through mathematical induction with respect to the order of a spline as follows. 
There are $K$ splines at each of the endpoints that are affected by the passage with $h$ to zero, so it is enough to consider them.
By the symmetry argument, it is enough to consider the ones at the left-hand-side endpoint. 

\sloppy For them we will prove a stronger thesis by adding the condition that   $\sup_{h\in (0,\epsilon)} \widetilde{B}^h_{i,K}$ is bounded and for $x\in [-\epsilon+\xi_0,\xi_0]$ and also showing that 
$$
\lim_{h\rightarrow 0} \widetilde{B}^h_{i,K}(x)=0,~~x\in (\xi_0-\epsilon,\xi_0).
$$
We note that these conditions guarantee the convergence in $\| \cdot \|$. 

Let $K=1$. Then the result is obvious since the values inside $(\xi_0,\xi_{n+1})$ do not depend on the value of $h$, while the portion of the norm of the first (and the last spline) corresponding to the intervals $(\xi_0-h,\xi_0)$ (and $(\xi_{n+1},\xi_{n+1}+h)$) converge to zero with $h\rightarrow 0 $. 

Suppose that the result is valid for  all $j<K$, for a certain $K>1$.  
It is thus sufficient to show that for  $i=0,\dots, K-1$:
\begin{align}
\label{eq:P1}
\lim_{h\rightarrow 0} \widetilde{B}^h_{i,K}(x)&=\widetilde{B}_{i,K}(x),~~x\in (\xi_0,\xi_1),\\
\label{eq:P2}
\lim_{h\rightarrow 0} \widetilde{B}^h_{i,K}(x)&=0,~~x\in (\xi_0-\epsilon,\xi_0),\\
\label{eq:P3}
 \exists{ M>0}: \sup_{h\in (0,\epsilon)} |\widetilde{B}^h_{i,K}(x)| &<M, ~~x\in (\xi_0-\epsilon,\xi_1).
\end{align}
Using \ref{eq:indi} we obtain for $i=1,\dots, K-1$:
\begin{equation*}
\widetilde{B}^h_{i,K}(x)
=
 \frac{ x +h-\xi_0
  }{
  {\xi_{i}}+h-\xi_0
  } 
 \widetilde{B}^h_{i,K-1}(x)+
  \frac{{\xi_{i+1}}-x}{
 {\xi_{i+1}}-{\xi_{1}} 
 } 
 \widetilde{B}^h_{i+1,K-1}(x).
\end{equation*}
One can apply the induction assumption about $\widetilde{B}^h_{i,K-1}$ to derive all the properties as $h$ converges to zero due to existence of the proper limit for $( x +h-\xi_0)/(\xi_{1}+h-\xi_0) \rightarrow ( x -\xi_0)/(\xi_{1}-\xi_0)$. 

It remains to consider $\widetilde{B}^h_{0,K}(x)$ for which we have 
\begin{equation*}
\widetilde{B}^h_{0,K}(x)
=
 \frac{ x +Kh-\xi_0
  }{
  Kh
  } 
 \widetilde{B}^h_{0,K-1}(x)+
  \frac{{\xi_{1}}-x}{
 {\xi_{1}}+(K-1)h-{\xi_{0}} 
 } 
 \widetilde{B}^h_{1,K-1}(x).
\end{equation*}
This case needs some additional argument since the first term has the factor $(x +Kh-\xi_0)/Kh$, which is unbounded. 
The property \ref{eq:P1} for this splines follows from the fact that the first term is zero on $(\xi_0,\xi_1)$. 
The second property is obvious due to continuity of splines and since for $x<\xi_0$, eventually, with $h$ such that $\xi_0-Kh>h$ and the support of  $\widetilde{B}^h_{0,K}$ does not contain $x$. 

To argue for \ref{eq:P3} let us note that $ \widetilde{B}^h_{0,K-1}(x)$ is spread over equidistant knots and since the values of $B$-splines over equidistant knots are independent of a scale transformation of the grid. Thus if one consider $ \widehat{B}^h_{l,K}(x)$ that spreads over  equidistant knots of the form $\xi_0-(K-i)h$, $i=0,\dots,2K+n+1$, we have
\begin{equation*}
\widehat{B}^h_{0,K}(x)
=
 \frac{ x +Kh-\xi_0
  }{
  Kh
  } 
 \widetilde{B}^h_{0,K-1}(x)+
  \frac{{\xi_{1}}-x}{
 {\xi_{1}}+(K-1)h-{\xi_{0}} 
 } 
 \widehat{B}^h_{1,K-1}(x).
\end{equation*}
where the left-hand-side values are independent of $h$  due to scale invariance and thus bounded uniformly with respect to $h$. 
Since the terms on the right hand side are symmetric and positive it implies that each of them has to be uniformly bounded, which yields \ref{eq:P3}.  
\end{proof}

\begin{proof}[Proof of \ref{prop:reflect}]
The symmetry is obvious and the orthogonality follows from
\begin{align*}
2\langle \tilde x, \tilde y \rangle &= 1- \frac{\langle x+y,x-y\rangle}{\|x+y\|\|x-y\|} + \frac{\langle x+y,x-y\rangle}{\|x+y\|\|x-y\|} - 1=0.
\end{align*}
The proof of normalization is straightforward 
\begin{align*}
\| \tilde x\|^2 &= \frac{1}{2} \left( \frac{1}{\|x+y\|^2} +\frac{1}{\|x-y\|^2} + \frac{2}{\|x+y\|\|x-y\|}\right) \|  x\|^2 \\
 & \quad + \frac{1}{2} \left( \frac{1}{\|x+y\|^2} +\frac{1}{\|x-y\|^2} - \frac{2}{\|x+y\|\|x-y\|}\right) \|  y\|^2 \\
 & \quad + \left( \frac{1}{\|x+y\|^2} -\frac{1}{\|x-y\|^2} \right) \langle x, y \rangle \\
 &= \frac{1+ \langle x, y \rangle}{\|x+y\|^2} + \frac{1- \langle x, y \rangle}{\|x-y\|^2} =  \frac{1}{2} + \frac{1}{2}  = 1. 
\end{align*}
An analogous proof holds for $ \| \tilde y\|^2=1$. 
\end{proof}

\begin{proof}[Proof of \ref{prop:gssym}]
Consider first $(y_1, y_{2k})$, then it is clear that these two vectors are orthogonal to each other as they are obtained by \ref{prop:reflect} from $(x_1^L, x_{2k}^R)$ which are a linear combination of  $(x_1, x_{2k})$ and thus this proves that $(y_1, y_{2k})$ span $(x_1, x_{2k})$.
Clearly from the same result, $Sy_1=y_{2k}$ if the same holds for $(x_1, x_{2k})$.
We proceed with the proof using the mathematical induction. 

Let us assume that for $i<k$ all the claims about  $\{ y_j,  y_{n-j+1}, j \le i\}$ are true. It follows from the Gram-Schmidt method that vectors in the pair $(x_{i+1} ^L, x_{2k-i} ^R)$ are orthogonal to vectors in $\{ x_j,  x_{n-j+1}, j \le i\}$, and thus this also holds for $(y_{i+1},y_{2k - i})$. 
Since by the induction assumption we know that $\{ x_j,  x_{n-j+1}, j\le i\}$ is spanned by $\{ y_j,  y_{n-j+1}, j \le i\}$ we conclude that $y_{i+1}$ and $y_{2k - i}$ are orthogonal to $\{ y_j,  y_{n-j+1}, j \le  i\}$. 
They are also orthogonal to each other and normalized because of what \ref{prop:reflect} guarantees. 
It follows also that $y_{i+1}$ and $y_{2k - i}$ are spanning $x_{i+1}$ and $x_{2k - i}$ and thus also $\{ x_j,  x_{n-j+1}, j\le i+1\}$ is spanned by $\{ y_j,  y_{n-j+1}, j \le i+1\}$.

Moreover, if $Sx_j=x_{2k-j+1}$, $j\le 2k$, then note first that $Sx_{i+1}^L=x_{2k-i}^R)$. 
Indeed, $x_{i+1}^L$ is orthogonalized with respect to $\{ x_j,  x_{n-j+1}, j \le i\}=\{ Sx_{n-j+1},  Sx_j, j \le i\}$ and thus with respect to  $\{ y_j,  y_{n-j+1}, j \le i\}=\{ Sy_{n-j+1},  Sy_j, j \le i\}$ by the induction assumption. 
Thus $Sx_{i+1}^L$ is orthonormalization of $x_{n-i}=Sx_{i+1}$, with respect to $\{ x_j,  x_{n-j+1}, j \le i\}$, since $\langle S x, S y\rangle = \langle x, y\rangle$ for each $x$ and $y$. 
Since the Gram-Schmidt orthonormalization uniquely defines $x_{2k-i}^R$ through these conditions we need to have $Sx_{i+1}^L=x_{2k-i}^R$.
Consequently, by \ref{prop:reflect}, it must be $Sy_{i+1}=y_{2k-i}$ and this concludes the proof.
\end{proof}

\begin{proof}[Proof of \ref{cor:c2}]
All except $Sy_{k+1}=y_{k+1}$ follows from \ref{prop:gssym}. 
We can assume without loss of generality that $(y_i)_{i=1,i\ne k+1}^{2k+1}$ is normalized and let $P$ be the projection to this space of vectors. 
It is enough to show that for any $x$ such that $Sx=x$, we have
$$
SPx=PSx=Px.
$$
This follows from
\begin{align*}
SPx&=\sum_{i=1}^k\left(\langle y_i,x \rangle Sy_{i}+\langle y_{2k-i+2},x \rangle Sy_{2k-i+2}\right)\\
&=\sum_{i=1}^k\left(\langle y_i,Sx \rangle y_{2k-i+2}+\langle y_{2k-i+2},Sx \rangle y_{i}\right)\\
&=\sum_{i=1}^k\left(\langle S^*y_i,x \rangle y_{2k-i+2}+\langle S^*y_{2k-i+2},x \rangle y_{i}\right)\\
&=\sum_{i=1}^k\left(y_{2k-i+2},x \rangle y_{2k-i+2}+\langle y_i,x \rangle y_{i}\right) = Px
\end{align*}
\end{proof}

Although it is not the case for the spline systems considered in this work, in some situation one may deal with vectors that are not symmetric in the above sense. The following results can be used to obtain symmetric version of vectors to be orthogonalized if the original ones are not. 
\begin{lemma}
\label{lem:l2}
Let $x_0$ and $y_0$ be arbitrary linearly independent vectors and $T$ is a symmetry operator. Then one of the following holds
\begin{itemize}
\item[(i)] $y_0=Ty_0$ and $x_0=Tx_0$, i.e. the vectors in the pair $(x_0,y_0)$ are symmetric,
\item[(ii)] $x=x_0+Ty_0$ and $y=Tx_0+y_0$ are symmetric to each other and linearly independent, 
\item[(iii)] $x=x_0-Ty_0$ and $y=Tx_0-y_0$ are symmetric to each other and linearly independent.
\end{itemize}
\end{lemma}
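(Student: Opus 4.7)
Plan: I would begin by separating the two claims inside (ii) and (iii) — ``symmetric to each other'' and ``linearly independent'' — and handle them separately. The symmetry $Tx = y$ is free from $T^2 = I$: in (ii),
\begin{equation*}
Tx = T x_0 + T^2 y_0 = Tx_0 + y_0 = y,
\end{equation*}
and the analogous identity with a sign change gives the symmetry in (iii). Thus the substantive content of the lemma is that either (i) holds outright, or at least one of the pairs constructed in (ii), (iii) is in fact linearly independent.

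Next I would characterize when these pairs fail to be linearly independent. Because $Tx = y$, any linear relation $y = \lambda x$ forces $x = \lambda y = \lambda^2 x$, so either $x = 0$ or $\lambda = \pm 1$. Translating each of these three possibilities back to conditions on $x_0$ and $y_0$, failure of (ii) is equivalent to one of
\begin{equation*}
x_0 + T y_0 = 0, \qquad (I - T)(x_0 - y_0) = 0, \qquad (I + T)(x_0 + y_0) = 0,
\end{equation*}
corresponding respectively to $x=0$, $y=x$, $y=-x$. An analogous trichotomy describes failure of (iii), with $x_0 - Ty_0 = 0$, $(I-T)(x_0 + y_0) = 0$, and $(I+T)(x_0 - y_0) = 0$.

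The heart of the argument is then a short case analysis over the nine possible combinations of failure modes of (ii) with failure modes of (iii). Most combinations collapse to $x_0 = \pm y_0$ or $x_0 = 0$ or $y_0 = 0$, all of which contradict linear independence of $x_0$ and $y_0$; the combination $(I-T)(x_0 - y_0) = 0$ paired with $(I-T)(x_0 + y_0) = 0$ is the one that produces case (i), since adding and subtracting yields $Tx_0 = x_0$ and $Ty_0 = y_0$.

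The main obstacle I anticipate is the symmetric partner combination $(I+T)(x_0+y_0) = 0$ together with $(I+T)(x_0-y_0) = 0$, which similarly forces $Tx_0 = -x_0$ and $Ty_0 = -y_0$. This antisymmetric configuration is consistent with linear independence (e.g.\ $T = -I$ on $\mathbb{R}^2$) yet matches none of (i)--(iii) literally, so either (i) must be read more broadly as ``$x_0$ and $y_0$ are eigenvectors of $T$'', or the lemma carries an implicit genericity hypothesis ruling out that both $x_0$ and $y_0$ lie in the $(-1)$-eigenspace of $T$. In the splinet application this genericity is automatic: the vectors to be symmetrized are nonnegative $B$-splines, whose sums and differences cannot sit entirely in the antisymmetric subspace of the midpoint reflection, so the case analysis closes with one of (i), (ii), (iii).
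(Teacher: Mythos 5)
Your proposal follows essentially the same route as the paper's own proof: observe that the symmetry claims in (ii) and (iii) are automatic from $T^2=I$, then assume both constructed pairs are linearly dependent and run a case analysis to force (i) or a contradiction with the independence of $x_0,y_0$. The difference is that you carried the case analysis out correctly where the paper does not. The paper writes $x_0+Ty_0=a(Tx_0+y_0)$ and $x_0-Ty_0=b(Tx_0-y_0)$, applies $T$, and concludes ``$a=b=1$''; but applying $T$ only yields $a^2=b^2=1$, so $a,b\in\{1,-1\}$. The branch $a=b=1$ gives case (i), and the mixed branches $a=-b$ force $x_0=\pm y_0$, contradicting independence, but the branch $a=b=-1$ is exactly the antisymmetric configuration you flagged: $Tx_0=-x_0$ and $Ty_0=-y_0$, which is consistent with linear independence (take $T=-I$ on $\mathbb{R}^2$ with $x_0=e_1$, $y_0=e_2$) and satisfies none of (i)--(iii). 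So the ``obstacle'' you anticipated is not a defect of your argument but a genuine counterexample to the lemma as literally stated, and a corresponding error in the paper's proof.

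Your proposed repairs are the right ones: either broaden (i) to ``$x_0$ and $y_0$ are eigenvectors of $T$ (with a common eigenvalue)'', or add a hypothesis excluding the case that both vectors lie in the $(-1)$-eigenspace of $T$. Note that the paper introduces this lemma only as an aside (``Although it is not the case for the spline systems considered in this work\dots''), and it is not invoked in the splinet construction itself, so the error does not propagate; your observation that nonnegative $B$-splines cannot sit in the antisymmetric subspace of the midpoint reflection confirms that the intended application is safe. One minor point worth making explicit in a final write-up: in the degenerate subcase $x_0+Ty_0=0$ (respectively $x_0-Ty_0=0$), the other pair is $(2x_0,-2y_0)$ (respectively $(2x_0,2y_0)$ up to sign), which is linearly independent by hypothesis, so that subcase closes cleanly — this matches the paper's handling.
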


\begin{proof}
We first note that the symmetries in {\it (ii)} and {\it (iii)} are always satisfied, since 
\begin{align*}
T(x_0+Ty_0)&=Tx_0+T^2y_0=Tx_0+y_0,\\
T(x_0-Ty_0)&=Tx_0-T^2y_0=Tx_0-y_0.
\end{align*}
Now, assume that both the pairs $(x_0+Ty_0, Tx_0+y_0)$ and $(x_0-Ty_0, Tx_0-y_0)$ are made of linearly dependent vectors. 

Let first assume that none of these vectors is equal to zero, then for some non zero $a$ and $b$ we have $x_0+Ty_0=a (Tx_0 +y_0)$ and $x_0-Ty_0=b (Tx_0 -y_0)$. 
By applying $T$ to both of these equalities, we obtain  $Tx_0+y_0=a (x_0 +Ty_0)$ and $Tx_0-y_0=b (x_0 -Ty_0)$, which implies that $a=b=1$ yielding $T(x_0-y_0)=x_0-y_0$ and  $T(x_0+y_0)=x_0 +y_0$. 
Therefore, both $x_0$ and $y_0$ are symmetric, i.e. $Tx_0=x_0$ and $Ty_0=y_0$ and  {\it (i)} is satisfied. 

Next we notice that one of $x_0+Ty_0$ and $x_0-Ty_0$ must be non-zero or, otherwise $x_0=0$, which contradicts assumptions. 
Then if $x_0+Ty_0=0$ but $x_0-Ty_0\ne 0$, in {(iii)} we have $x=2x_0$ and $y=-2y_0$ which are linearly independent by the assumption. 
On the other hand, if   $x_0+Ty_0\ne 0$ but $x_0-Ty_0= 0$, then $x$ and $y$ in {\it (ii)} are linearly independent. 
\end{proof}
Combining the two above lemmas guarantees that, given a symmetry operator $S$, one can obtain orthogonalized  pair of vector satisfying symmetry property starting from an arbitrary pair of linearly independent vectors, which is formally stated in the next result. 
\begin{corollary}
\label{cor:c1}
Assume a symmetry operator $T$ on a linear space. 
Let $x$ and $y$ be arbitrary two linearly independent vectors. There exists a pair $(\tilde x,\tilde y)$ of linear combinations of $x$, $y$, $Tx$, and $Ty$, such that $\tilde x$ and $\tilde y$  are  orthogonal to each other and  $\{\tilde x,\tilde y\}=\{T\tilde x,T\tilde y\}$.
\end{corollary}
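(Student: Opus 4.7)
The plan is to combine Lemma~\ref{lem:l2} with Proposition~\ref{prop:reflect}, dispatching on the trichotomy produced by the lemma. First I would apply Lemma~\ref{lem:l2} to the given $x$, $y$ and the symmetry operator $T$. This returns one of three mutually exclusive situations: either (i) both $x$ and $y$ are already $T$-fixed vectors, or (ii)--(iii) the prescribed linear combinations $x' = x \pm Ty$ and $y' = Tx \pm y$ are linearly independent and satisfy $y' = Tx'$. In each case the replacement vectors are linear combinations of $x,y,Tx,Ty$, so any further linear operation performed on them will produce the admissible kind of output required by the corollary statement.

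In cases (ii)--(iii), the pair $(x',y')$ meets the exact hypothesis of Proposition~\ref{prop:reflect} after normalizing: two linearly independent normalized vectors related by the symmetry $T$. Applying that proposition directly yields orthonormal $\tilde x,\tilde y$ with $T\tilde x = \tilde y$, and since $T^2=I$ we automatically also get $T\tilde y = \tilde x$, whence $\{T\tilde x,T\tilde y\} = \{\tilde y,\tilde x\} = \{\tilde x,\tilde y\}$ as required.

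Case (i) needs a separate argument since here $T$ fixes each vector individually rather than swapping them, so Proposition~\ref{prop:reflect} does not directly apply. I would handle it by plain Gram--Schmidt: set $\tilde x = x/\|x\|$ and $\tilde y = (y - \langle y,\tilde x\rangle \tilde x)/\|y - \langle y,\tilde x\rangle \tilde x\|$. The verification that this preserves symmetry is short: linearity of $T$, together with $Tx=x$ and $Ty=y$, gives $T\tilde x=\tilde x$, and then $T(y - \langle y,\tilde x\rangle \tilde x) = y - \langle y,\tilde x\rangle \tilde x$, so after normalization $T\tilde y = \tilde y$. Hence $\{T\tilde x,T\tilde y\} = \{\tilde x,\tilde y\}$, and $\tilde x,\tilde y$ are orthogonal by construction.

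The only subtle point, and arguably the main obstacle, is checking that the outputs in all three cases really are linear combinations of the original $x, y, Tx, Ty$ --- this is immediate in cases (ii) and (iii) since Lemma~\ref{lem:l2} already provides such combinations and Proposition~\ref{prop:reflect} only takes further linear combinations of them, while in case (i) no application of $T$ is even needed. Because the corollary does not insist on the specific symmetry pattern (fixed versus swapped), the union of the two constructions above exhausts all possibilities, completing the proof.
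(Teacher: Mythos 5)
Your proposal is correct and follows essentially the same route as the paper: dispatch on the trichotomy of Lemma~\ref{lem:l2}, apply Proposition~\ref{prop:reflect} to the symmetric pair produced in cases (ii)--(iii), and fall back on ordinary Gram--Schmidt in case (i) where $T$ fixes both vectors. Your write-up is somewhat more detailed than the paper's (which merely states that ``any orthogonalization'' works in case (i)), but the substance is identical.
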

\begin{proof}
If we start with vectors that follow {\it (i)} of \ref{lem:l2}, then any orthogonalization of the two vectors leads to $(\tilde x,\tilde y)$. In all other cases the result follows easily by first applying \ref{lem:l2} {\it (ii)} or {\it (iii)} and then \ref{prop:reflect}. 
\end{proof}
\begin{proof}[Proof of \ref{prop:augmentGrammMatrix}]
Let us call the vectors corresponding to the first $n_{U}$ and last $n_{D}$ columns of the augmented Gram matrix $\widetilde{H}$ the superfluous vectors and the rest as the regular vectors. Note that the augmented Gram matrix $\widetilde{H}$ is a block diagonal matrix. The blocks corresponding to the part of superfluous vectors are identity matrices. Given this structure, all the superfluous vectors are orthogonal to each other and to the regular vectors. 
Let us consider $i \in \{n_U+1,\dots, n_D\}$, then we want to show that the column vector $\widetilde{\mathbf P}_{\cdot i}$ has zeros for the coordinates below $n_U+1$ and above $n_D$. 

Let us consider $\widetilde{P}_{j i}$, for some $j\le n_U$ or $j> n_D$.
Since all vectors are merged into the $k$-tuplets let us consider first those in $\tilde{\mathbf A}_{\cdot,\mathcal J}$ for the lowest support level $l=0$ in \ref{alg:genalg}. 
There are two possible situations. First, the index $j$ could belong to a $k$-tuplet consisting only of regular vectors. 
However, in this case, $P_{ji}$ cannot be non-zero since all superfluous vectors are already orthogonal to the regular vectors.  
Second, the index $j$ corresponds to a vector in a $k$-tuplet that contains both types of vectors.
Given \ref{prop:reflect}, \ref{alg:iso}, \ref{prop:gssym} and \ref{cor:c2}, one can notice that the symmetric GS orthogonalization leaves the superfluous vectors unchanged and thus again $P_{ji}$ must be zero.

Let us now consider $i$ that correspond to a column in  $\tilde{\mathbf A}_{\cdot,\mathcal J}$ at the level $l>0$. 
If $j$ is corresponding to a superfluous vector located in some lower layer than the $l$th one, then $P_{ji}$ has to be zero since the recurrent procedure in \ref{alg:genalg} makes vectors at the given layer orthogonal to the ones below. 
Moreover, if $i$  indexes  a vector that belongs to a $k$-tuple made of only the regular vectors, then these vectors would still be orthogonal to all superfluous vectors implying $P_{ji}=0$.  
Thus the only case not considered is when $i$ indexes a vector in the $k$-tuple that is a mixture of superfluous and transformed regular vectors. 
It is clear that the transformed regular vectors remain orthogonal to all superfluous vectors and the symmetric GS orthogonalization does not change it leading to $P_{ji}=0$.  
\end{proof}

We turn to a number of results that are used to assess the approximation of the orthonormal basis by stopping the recurrence represented in \ref{alg:DySp0} at a certain value $l$. 
Let us start with a simple lemma about the inner product between two symmetric first order splines over equally spaced set of knots. 
\begin{lemma}
\label{lem:inn}
Let $l\ge 0$ and $\mathbf g=(g_0,\dots, g_{l+1})$ represent values of a first order spline $G$ at its knots $(0,1,\dots, l, l+1)$. 
Its symmetric reflection $\tilde G$ is the first order spline (over the same set of knots) represented by $\tilde g=(g_{l+1},\dots, g_0)$. 
Then 
\begin{align*}
\|G\|^2_2&=\frac 1 3
 \left \|(\tilde{\mathbf Z}^1+\tilde{\mathbf Z}_1)\mathbf g\right \|^2 \\
\langle G, \tilde G \rangle &=
 \frac{1}{6}\left( 
 4~ \mathbf g^\top \tilde{\mathbf I}_0\, \mathbf g +  \mathbf g^\top \tilde{\mathbf Z}^1 \mathbf g  +  \mathbf g^\top  \tilde{\mathbf Z}_1\, \mathbf g  
 \right),
\end{align*}
where $\tilde{\mathbf I}^0$ is the anti-diagonal matrix with zero at the top-right position of the anit-diagonal and ones on the rest, $\tilde{\mathbf Z}^1$ is the  matrix the matrix that has ones above the anti-diagonal and zero otherwise while  $\tilde{\mathbf Z}_1$ is its anti-diagonal transpose. 
\end{lemma}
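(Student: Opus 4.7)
The plan is to reduce both identities to direct integration on each knot interval $[i,i+1]$ of the piecewise linear representation, and then to translate the resulting quadratic forms in $\mathbf g$ into the claimed matrix notation. This is essentially a calculation with no clever idea required.

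On each $[i,i+1]$ one writes $G(x) = g_i(1-(x-i)) + g_{i+1}(x-i)$ and, using $\tilde g_i = g_{l+1-i}$, $\tilde G(x) = g_{l+1-i}(1-(x-i)) + g_{l-i}(x-i)$. Only three elementary moments enter: $\int_0^1 (1-u)^2\,du = \int_0^1 u^2\,du = 1/3$ and $\int_0^1 u(1-u)\,du = 1/6$. These yield
\[
\int_i^{i+1} G^2\,dx = \frac{1}{3}\bigl(g_i^2 + g_i g_{i+1} + g_{i+1}^2\bigr),
\]
\[
\int_i^{i+1} G\tilde G\,dx = \frac{1}{6}\bigl(2g_i g_{l+1-i} + g_i g_{l-i} + g_{i+1} g_{l+1-i} + 2g_{i+1}g_{l-i}\bigr).
\]

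Summing over $i=0,\dots,l$ produces the full quadratic forms in $\mathbf g$; what remains is to recognise each coefficient pattern as the stated matrix expression. For the inner product, after the re-indexing $j=i+1$ in the terms containing $g_{i+1}$, the four contributions split by the index-sum of their factors: the two ``coefficient $2$'' terms both produce $\sum_i g_i g_{l+1-i}$, i.e.\ the anti-diagonal sum that is missing exactly the top-right corner, giving $4\mathbf g^\top \tilde{\mathbf I}_0 \mathbf g$; the remaining two produce $\sum_i g_i g_{l-i}$ and $\sum_j g_j g_{l+2-j}$, the super- and sub-anti-diagonal sums, which are precisely $\mathbf g^\top \tilde{\mathbf Z}^1 \mathbf g$ and $\mathbf g^\top \tilde{\mathbf Z}_1 \mathbf g$ (anti-transposes of each other by construction). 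For the norm identity, I expand $\|(\tilde{\mathbf Z}^1+\tilde{\mathbf Z}_1)\mathbf g\|^2$ as $\mathbf g^\top(\tilde{\mathbf Z}^1+\tilde{\mathbf Z}_1)^\top(\tilde{\mathbf Z}^1+\tilde{\mathbf Z}_1)\mathbf g$ and check term-by-term against $\sum_i(g_i^2 + g_i g_{i+1} + g_{i+1}^2)$.

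The only real obstacle is index bookkeeping. One must carefully account for the truncation $i=0,\dots,l$ coming from summing over the $l+1$ intervals rather than over all $l+2$ knots; this truncation is exactly what produces the boundary correction (the single missing entry at the top-right of $\tilde{\mathbf I}_0$) and distinguishes the off-anti-diagonals captured by $\tilde{\mathbf Z}^1$ and $\tilde{\mathbf Z}_1$ from the full upper-left and lower-right triangles. No estimate or structural argument is needed beyond these verifications.
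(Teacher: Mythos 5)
Your method is the same as the paper's: integrate $G^2$ and $G\tilde G$ interval by interval using the linear mass--matrix moments $1/3,1/3,1/6$, sum over $i=0,\dots,l$, and reindex to identify the anti-diagonal quadratic forms. Your per-interval formulas are correct, and your treatment of the second identity is complete and agrees with both the paper's computation and the stated formula: the two coefficient-$2$ terms each reduce to the anti-diagonal sum $\mathbf g^\top\tilde{\mathbf I}_0\,\mathbf g$ and the remaining two give the two adjacent anti-diagonals.

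The gap is in the first identity, at exactly the step you defer to a ``term-by-term check'': that check fails. Your integral $\|G\|_2^2=\tfrac13\sum_{i=0}^l\bigl(g_i^2+g_ig_{i+1}+g_{i+1}^2\bigr)$ is correct, but it does not equal $\tfrac13\|(\tilde{\mathbf Z}^1+\tilde{\mathbf Z}_1)\mathbf g\|^2$ under the reading of $\tilde{\mathbf Z}^1,\tilde{\mathbf Z}_1$ that you (correctly) use for the second identity: there $(\tilde{\mathbf Z}^1+\tilde{\mathbf Z}_1)\mathbf g$ has $i$th entry $g_{l-i}+g_{l+2-i}$, so its squared norm couples values two knots apart and cannot reproduce the adjacent cross terms $g_ig_{i+1}$. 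In fact no $0$--$1$ matrix $\mathbf M$ can satisfy $\tfrac13\|\mathbf M\mathbf g\|^2=\|G\|_2^2$: for $l=0$ and $\mathbf g=(1,1)$ one has $\|G\|_2^2=1$, while $\tfrac13\|\mathbf M\mathbf g\|^2=\tfrac13(r_1^2+r_2^2)$ with integer row sums $r_1,r_2$, and $r_1^2+r_2^2=3$ has no integer solution. To be fair, the paper's own proof stumbles at the same spot --- it asserts $\tfrac13\sum\bigl(g_j^2+g_{j-1}^2+g_jg_{j-1}\bigr)=\tfrac13\sum(g_j+g_{j-1})^2$, which overcounts the cross term by a factor of two, and then declares equivalence with the matrix form without detail --- so the defect originates in the lemma's matrix restatement rather than in your calculus. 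But the verification you promise is precisely the step that would have exposed this, and asserting it without performing it leaves the first claim unproven (indeed unprovable as stated).
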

\begin{proof}
Since the inner product of the two linear functions $y=(d-a)x+d$ and $y=(b-c)x+b$ over $[0,1]$ is $(db+ac)/3+(dc+ab)/6$ thus we obtain
\begin{align*}
\langle G\rangle^2_2&= \frac{
\sum_{j=1}^{l+1}\left(g_j^2+ g_{j-1}^2\right)
}{
3}+
\frac{\sum_{j=1}^{l+1}g_jg_{j-1}
}{3}\\
&=\frac 1 3 \sum_{j=1}^{l+1} (g_j+g_{j-1})^2,
\\
\langle G, \tilde G \rangle &= \frac{\sum_{j=1}^{l+1}\left(g_jg_{l+1-j}+ g_{j-1}g_{l+2-j}\right)}{3}+\frac{\sum_{j=1}^{l+1}\left(g_jg_{l+2-j}+ g_{j-1}g_{l+1-j}\right)}{6}\\
 &= \frac{\sum_{j=1}^{l+1}g_jg_{l+1-j}+ \sum_{j=0}^{l}g_{j}g_{l+1-j}}{3}+\frac{\sum_{j=1}^{l+1}g_jg_{l+2-j}+ \sum_{j=0}^{l}g_{j}g_{l-j}}{6}
 \\
 &= \frac{2 \sum_{j=0}^{l}g_{j}g_{l+1-j}}{3}+\frac{\sum_{j=1}^{l+1}g_jg_{l+2-j}+ \sum_{j=0}^{l}g_{j}g_{l-j}}{6},
\end{align*}
which is equivalent to the matrix formulation in the lemma. 
\end{proof}
Define a sequence of vectors $\mathbf g^l=(g^l_0,\dots, g^l_{2^l})$, $l=0,1,\dots$  through the following recurrent formula
\begin{align*}
\mathbf g^0&=(\sqrt{3/2},0),\\
\mathbf g^{l+1}&=\frac{ (g^l_0,g^l_1,\dots,g_{2^l}^l,0,\dots,0)
-
\frac{\sqrt{2}}2\langle G_l, \tilde G_l \rangle (g_{2^l}^l, \dots,g_1^l, g_0^l,g^l_1,\dots,g_{2^l}^l) 
}{
\sqrt{1  -\langle G_l, \tilde G_l \rangle ^2}},
\end{align*}
where $\langle G_l, \tilde G_l \rangle$ is the inner product of the splines given by $\mathbf g^l$ and $\tilde{\mathbf g}^l$ as described in \ref{lem:inn}.
Let $S(G_l)$ be a concatenation of $\tilde{G}_l$ and $G_l$ symmetrically around zero and thus defined over the equally spaced knots $(-2^l,\dots,-1,0,1,\dots, 2^l)$ having the values $(g_{2^l}^l, \dots,g_1^l, g_0^l,g^l_1,\dots,g_{2^l}^l)$ at these knots. 
Similarly let $ I_0(G_{l})$ be the extension of  $G$ over the same grid by padding with zero over the knots on the right hand side of $G$. 
Then the above equation can written compactly as
\begin{equation}
\label{eq:gplus}
G_{l+1}=\frac{
I_0(G_{l})-
\frac{\sqrt{2}}2\langle G_l, \tilde G_l \rangle S(G_l)}{\sqrt{1-\langle G_l, \tilde G_l \rangle^2)}}.
\end{equation}
\begin{lemma}
\label{lem:bound}
For $l=0,1,2,\dots$, for $G_l$ satisfying \ref{eq:gplus} and $\|S(G_0)\|=1$, then we have $\|S(G_{l})\|=1$ for all $l\ge 0$ and
$$
\langle G_{l+1},\tilde{G}_{l+1}\rangle = -\frac{2\sqrt{2}-1}2 \frac{\langle G_l, \tilde G_l \rangle^2}{1-\langle G_l, \tilde G_l \rangle^2}.
$$
If $G_0$ corresponds to $\mathbf g^0=(\sqrt{3/2},0)$, then for $l\ge 1$:
\begin{equation}
\label{ineq:bound}
\left |\langle G_{l},\tilde{G}_{l}\rangle \right| 
\le 
\frac{1}{a} 
\left(\frac a4\right)^{2^{l}},
\end{equation}
where
$$
a= 8 \frac{2\sqrt{2}-1}{15}\approx 0.9751611 .
$$
\end{lemma}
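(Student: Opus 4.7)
The proof follows an induction on $l$ that simultaneously establishes the norm identity $\|S(G_l)\|=1$ and the scalar recursion for $h_l:=\langle G_l,\tilde G_l\rangle$, with the doubly exponential bound then read off from the resulting real recurrence.

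For the base case $l=0$, direct computation from $\mathbf g^0=(\sqrt{3/2},0)$ yields $\|G_0\|^2=\int_0^1(3/2)(1-x)^2\,dx=1/2$, hence $\|S(G_0)\|^2=2\|G_0\|^2=1$, and $\langle G_0,\tilde G_0\rangle=\int_0^1(3/2)x(1-x)\,dx=1/4=h_0$. For the inductive step I would expand the squared norm and the inner product in \ref{eq:gplus} using four structural facts about $I_0(G_l)$ and $S(G_l)$ on the doubled equidistant grid: (i) $\|I_0(G_l)\|^2=\|G_l\|^2=1/2$ and $\|S(G_l)\|^2=1$; (ii) $I_0(G_l)$ and its mirror image $\widetilde{I_0(G_l)}$ have disjoint supports, so their inner product vanishes; (iii) $S(G_l)$ is symmetric about the midpoint, so $\widetilde{S(G_l)}=S(G_l)$, whence $\tilde G_{l+1}$ is obtained from the same linear combination as $G_{l+1}$ but with $I_0(G_l)$ replaced by $\widetilde{I_0(G_l)}$; (iv) both cross inner products $\langle I_0(G_l),S(G_l)\rangle$ and $\langle S(G_l),\widetilde{I_0(G_l)}\rangle$ equal $h_l$, since each reduces to $\langle G_l,\tilde G_l\rangle$ on the half-grid where the two functions overlap. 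Plugging these into the expansion of \ref{eq:gplus} and simplifying should yield both $\|G_{l+1}\|^2=1/2$ and the claimed identity $h_{l+1}=-\tfrac{2\sqrt{2}-1}{2}\,h_l^2/(1-h_l^2)$.

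To prove \ref{ineq:bound} from the scalar recurrence, set $c:=(2\sqrt{2}-1)/2$ and $\epsilon_l:=(1/a)(a/4)^{2^l}$; I would show $|h_l|\le \epsilon_l$ by induction. The base case reduces to the identity $(1/a)(a/4)=1/4$, which is exactly the meaning of the choice $a=8(2\sqrt{2}-1)/15$. For the inductive step, $|h_{l+1}|\le c|h_l|^2/(1-h_l^2)$, and substituting $|h_l|^2\le \epsilon_l^2=(1/a^2)(a/4)^{2^{l+1}}$ reduces the desired bound $|h_{l+1}|\le \epsilon_{l+1}$ to the single inequality $a(1-h_l^2)\ge c$. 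Since $|h_l|\le 1/4$ throughout the induction, $a(1-h_l^2)\ge 15a/16=c$ with equality only at $l=0$, so the induction closes.

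The principal obstacle will be the inner product bookkeeping in the inductive step: one must consistently identify $I_0(G_l)$, $\widetilde{I_0(G_l)}$ and $S(G_l)$ as splines on the enlarged grid, track which half-intervals their supports overlap, and recognize every cross term as a rescaled copy of $h_l$. Beyond that the algebra is routine, and the only delicate numerical point is that the value of $a$ is calibrated to make the decay bound tight at $l=0$ — the induction has zero margin at the base step but rapidly accumulating slack afterwards.
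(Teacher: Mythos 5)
Your overall strategy --- a simultaneous induction establishing $\|S(G_l)\|=1$ and the scalar recursion for $h_l=\langle G_l,\tilde G_l\rangle$ by expanding \ref{eq:gplus} with the structural facts (i)--(iv), followed by the calibrated induction $|h_{l+1}|\le a\,h_l^2$ for the doubly exponential bound --- is exactly the route the paper takes. Your base-case computations ($\|G_0\|^2=1/2$, $h_0=1/4$), your facts (i)--(iv), and your treatment of \ref{ineq:bound} (including the observation that $a(1-h_l^2)\ge(2\sqrt2-1)/2$ once $|h_l|\le1/4$, with equality only at the base step) all match the paper's argument; your handling of the bound $|h_l|\le 1/4$ directly from the induction hypothesis is in fact slightly cleaner than the paper's separate monotonicity argument.

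However, the key inductive step does not close the way you assert, and you should not defer it as ``routine algebra.'' Granting your (correct) facts (i)--(iv), the coefficient $\tfrac{\sqrt2}{2}h_l$ in \ref{eq:gplus} is \emph{not} the coefficient of the orthogonal projection of $I_0(G_l)$ onto the unit vector $S(G_l)$; that coefficient is $\langle I_0(G_l),S(G_l)\rangle/\|S(G_l)\|^2=h_l$. Hence the numerator of \ref{eq:gplus} is not an orthogonal residual, and direct expansion gives
\[
\|G_{l+1}\|^2=\frac{\tfrac12-\sqrt2\,h_l^2+\tfrac12h_l^2}{1-h_l^2}
=\frac12\cdot\frac{1-(2\sqrt2-1)h_l^2}{1-h_l^2}\neq\frac12
\]
whenever $h_l\neq0$ (already at $l=0$ this is about $0.472$, not $0.5$). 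So the norm identity $\|S(G_{l+1})\|=1$ cannot be derived from \ref{eq:gplus} as written, and without it the computation of $h_{l+2}$ at the next level (which uses $\|S(G_{l+1})\|^2=1$) is unjustified; only the inner-product identity for $h_{l+1}$ survives the expansion. To be fair, the paper's own proof contains the identical inconsistency: it records $\langle I_0(G_l),S(G_l)\rangle=\langle G_l,\tilde G_l\rangle$ and $\|S(G_l)\|=1$, and in the next line calls $\tfrac{\sqrt2}{2}\langle G_l,\tilde G_l\rangle S(G_l)$ the orthogonal projection of $I_0(G_l)$ onto $S(G_l)$ --- these cannot both hold. Either the constant in \ref{eq:gplus} or one of the two conclusions of the lemma must be amended (with coefficient $h_l$ and denominator $\sqrt{1-2h_l^2}$ the norm is preserved but the recursion becomes $h_{l+1}=-h_l^2/(1-2h_l^2)$), so the defect lies in the statement as much as in either proof; but as written, your claim that ``plugging in and simplifying'' yields both conclusions is one that the algebra refutes rather than confirms.
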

\begin{proof}
We note from the previous lemma that $\|S(G_0)\|^2= 2\|G_0\|^2=2\times \frac 1 3 \frac 3 2=1$.
Assume that $\|S(G_k)\|^2=1$ for $k\le l$. 
Then we note that 
\begin{align*}
\| I_0(G_l)\|^2_2&=\| G_l\|^2_2=\|S(G_k)\|^2/2=1/2,\\
\langle I_0(G_l), S(G_l) \rangle &=\langle G_l, \tilde G_l  \rangle 
\end{align*}
so that $\frac{\sqrt{2}}2\langle G_l, \tilde G_l \rangle S(G_l)$ is the orthogonal projection of  $I_0(G_l)$ into  $S(G_l)$ and the orthogonal decomposition of $I_0(G_l)$  takes the form 
$$
I_0(G_{l})=\frac{\sqrt{2}}2\langle G_l, \tilde G_l \rangle S(G_l) 
+ 
I_0(G_{l})-
\frac{\sqrt{2}}2\langle G_l, \tilde G_l \rangle S(G_l)
$$
so that 
$$
\left \|I_0(G_{l})-
\frac{\sqrt{2}}2\langle G_l, \tilde G_l \rangle S(G_l)
\right\|^2_2
=
\frac 1 2\left( 1 - \langle G_l, \tilde G_l \rangle^2 \right).
$$
This implies
\begin{align*}
\|S(G_{l+1})\|^2_2&=2\|G_{l+1}\|_2^2=2\times \frac 12=1.
\end{align*}
Using \ref{eq:gplus} we get 
\begin{align*}
\langle G_{l+1},\tilde{G}_{l+1}\rangle & =
\frac{
\langle I_0(G_{l}), \widetilde{ I_0(G_{l})}\rangle-
\sqrt{2}
\langle G_l, \tilde G_l \rangle
 \langle S(G_l), \widetilde{ I_0(G_{l})}\rangle
 + 
 \frac 1 2\langle G_l, \tilde G_l \rangle^2 \|S(G_l)\|^2
}{
1-\langle G_l, \tilde G_l \rangle^2
}\\
& =
\frac{
-\sqrt{2}
\langle G_l, \tilde G_l \rangle^2 + \frac 1 2\langle G_l, \tilde G_l \rangle^2 
}{
1-\langle G_l, \tilde G_l \rangle^2
}
 =(-\sqrt{2}+1/2)
\frac{
\langle G_l, \tilde G_l \rangle^2
}{
1-\langle G_l, \tilde G_l \rangle^2
}.
\end{align*}
Finally, we see that in the case $\mathbf g^0=(\sqrt{3/2},0)$, $\langle G_0,\tilde{G}_0\rangle=1/4$ and   the sequence $|\langle G_l,\tilde{G}_l\rangle|$ is decreasing since first 
$$
|\langle G_1,\tilde{G}_1\rangle|=\frac{2\sqrt{2}-1}2 \frac{1}{15}<1/4
$$
and than if $|\langle G_j,\tilde{G}_j\rangle|\le |\langle G_{j+1},\tilde{G}_{j+1}\rangle|$ for $j<l$, then
\begin{align*}
|\langle G_{l+1},\tilde{G}_{l+1}\rangle|&=\frac{2\sqrt{2}-1}2 \frac{\langle G_l, \tilde G_l \rangle^2}{1-\langle G_l, \tilde G_l \rangle^2}\\
&\le \frac{2\sqrt{2}-1}2 \frac{\langle G_{l-1}, \tilde G_{l-1} \rangle^2}{1-\langle G_{l-1}, \tilde G_{l-1} \rangle^2}\\
&=|\langle G_{l},\tilde{G}_{l}\rangle|
\end{align*}
 since $x/(1-x)$ is an increasing function.
 Thus for all $l\ge 0$:
 \begin{align}\nonumber
|\langle G_{l+1},\tilde{G}_{l+1}\rangle|&=\frac{2\sqrt{2}-1}2 \frac{\langle G_l, \tilde G_l \rangle^2}{1-\langle G_l, \tilde G_l \rangle^2}\\
\nonumber
&\le \frac{2\sqrt{2}-1}2 \frac{\langle G_l, \tilde G_l \rangle^2}{1-\langle G_0, \tilde G_0 \rangle^2}\\
\label{eq:recb}
&\le 8 \frac{2\sqrt{2}-1}{15}  \langle G_l, \tilde G_l \rangle^2.
\end{align}
The bound in \ref{ineq:bound} for $l=1$ takes form
$$
\left |\langle G_{1},\tilde{G}_{1}\rangle \right|  \le \frac{a}{16}
$$
which holds because 
$$
\langle G_{1},\tilde{G}_{1}\rangle= -\frac{2\sqrt{2}-1}2 \frac{\langle G_0, \tilde G_0 \rangle^2}{1-\langle G_0, \tilde G_0 \rangle^2}=-\frac{2\sqrt{2}-1}2 \frac{1}{15}=-\frac{a}{16}.
$$
Let us assume that \ref{ineq:bound} holds for $l<k$. 
Then, by \ref{eq:recb},
\begin{align*}
|\langle G_{k},\tilde{G}_{k}\rangle|&\le a  \langle G_{k-1}, \tilde G_{k-1} \rangle^2\le a \left(\frac 1 a \left(\frac a 4\right)^{2^{k-1}} \right)^2=\frac 1 a \left(\frac a 4\right)^{2^k},
\end{align*}
which proves the final step of the result.
\end{proof}
\begin{lemma}
\label{lem:iden}
Consider the first order OB-spline 
$$\mathcal{OB}_l=\{OB_{i,l}, i=1,\dots, 2^{N-l-1}\}$$ 
at a fixed level  $l=0,\dots, N$ defined in  \ref{subsec:da} and over equally spaced knots 
$\xi_i=i2^{-N}$, $i=0,\dots,2^N$. 
Moreover, let ${\mathcal{B}}_l=\{\tilde{B}_{j,r}, j=1,\dots, 2^{N-r-1}, r>l\}$,  be the $B$-splines at support levels above $l$ orthogonalized with respect to  $\mathcal{OB}_l$ obtained at the $l$-th step of the algorithm and also additionally normalized.

Then 
$$
OB_{i,l}(t)=2^{l}S(G_l)\left(
2^{2l}
\left(
t-2^{l-N}(2i-1)
\right)
\right),
$$
and 
 $$
\tilde{B}_{j,r}(t)
=
{2^{l}}S(G_l)\left(
2^{2l}
\left(
t-2^{r-N}(2j-1)
\right)
\right).
$$
Moreover, 
\begin{align*}
\langle OB_{i,l},\tilde{B}_{j,r}\rangle= \begin{cases}
\langle G_l,\tilde{G}_l\rangle;& |2^r(2j-1)-2^l(2i-1)|=2^l, \\
0;& \mbox{\it otherwise.}
\end{cases}
\end{align*}
 \end{lemma}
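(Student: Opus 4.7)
My plan is to prove both claims by induction on the level $l$, exploiting the fact that equispaced knots on $[0,1]$ make the construction translation-invariant, so all OB-splines (and partially orthogonalized $\tilde B_{j,r}$'s) at a given step are translates of a single template that will be identified with $S(G_l)$. The base case $l=0$ is essentially a direct computation: because $k=1$, the symmetric Gram--Schmidt step at level $0$ collapses to pure normalization, so $OB_{i,0}$ is just the $L^2$-normalized first-order $B$-spline centered at $\xi_{2i-1}$, a tent whose shape matches $S(G_0)$ with $\mathbf g^0=(\sqrt{3/2},0)$. The $\tilde B_{j,r}$'s for $r>0$ at step $l=0$ are unchanged normalized original B-splines (nothing has been projected out yet), and the inner product formula follows from Lemma~\ref{lem:inn} by a direct overlap computation on two adjacent tents, yielding $\langle G_0,\tilde G_0\rangle=1/4$.

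For the inductive step I would proceed as follows. Assuming the claim at level $l$, consider the orthogonalization against $\mathcal{OB}_l$ performed in the next iteration of Algorithm~\ref{alg:DySp0}. By the inductive inner product formula, only the two nearest OBs in $\mathcal{OB}_l$ have non-trivial inner product with a given $\tilde B_{j,r}$ ($r>l$), both equal to $\langle G_l,\tilde G_l\rangle$. By the inductive shape formula together with equispacing, these two adjacent OBs are mirror reflections of one another across the center of $\tilde B_{j,r}$, so their combined contribution reassembles into a scaled copy of $S(G_l)$ sitting at that center. Subtracting this projection from the $I_0(G_l)$-like extension of $\tilde B_{j,r}$ and renormalizing reproduces the arithmetic recurrence $G_l\mapsto G_{l+1}$ from \ref{eq:gplus} verbatim, so the updated $\tilde B_{j,r}$ acquires the prescribed shape $S(G_{l+1})$. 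Since the pair of level-$(l+1)$ $\tilde B$'s that become $OB_{i,l+1}$ sit on disjoint original supports and are already pairwise reflection-symmetric, the subsequent symmetric Gram--Schmidt again reduces to mere normalization and preserves the shape.

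With the shape statement in hand at level $l+1$, I would deduce the inner product formula by translation invariance. Two copies of $S(G_{l+1})$ supported on translates of $[-2^{l+1},2^{l+1}]$ (in the template frame) have non-trivial overlap precisely when their centers, expressed in integer index units, are offset by $2^{l+1}$, which is exactly the condition $|2^r(2j-1)-2^{l+1}(2i-1)|=2^{l+1}$. Another direct application of Lemma~\ref{lem:inn} identifies this overlap as $\langle G_{l+1},\tilde G_{l+1}\rangle$; outside this case the supports are disjoint and the inner product vanishes.

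The main obstacle I anticipate is verifying the projection step of the inductive argument: showing that the two adjacent projections assemble into exactly the symmetric object $\tfrac{\sqrt{2}}{2}\langle G_l,\tilde G_l\rangle\,S(G_l)$ that appears in \ref{eq:gplus}. This requires carefully tracking the $\sqrt{2}/2$ factor, which stems from the relationship $\|G_l\|=1/\sqrt{2}$ versus $\|S(G_l)\|=1$, and from gluing an unreflected copy of $G_l$ on one side with a reflected copy on the other to form $S(G_l)$ across the shared knot. Once this alignment is confirmed, the rest of the induction is purely algebraic and the asserted inner products follow from the definitions.
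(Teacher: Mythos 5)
Your strategy is essentially the one the paper relies on, but made explicit: the paper's own proof consists of little more than the change of variables $u=2^{2l}\bigl(t-2^{l-N}(2i-1)\bigr)$ to check the normalization, with the crucial identification between the algorithm's projection step \ref{eq:recst} and the recurrence \ref{eq:gplus} left implicit in how $G_l$ was defined just before \ref{lem:bound}; your induction on $l$ supplies exactly that missing link, together with the translation-invariance argument for the inner products, so in substance the two proofs coincide and yours is the more complete. One step of your inductive paragraph is, however, wrong as literally stated: the combined projection $\langle G_l,\tilde G_l\rangle\,(OB_{-}+OB_{+})$ onto the two neighboring level-$l$ splines does \emph{not} reassemble into a scaled copy of $S(G_l)$ at the center of $\tilde B_{j,r}$. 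The two translates of $S(G_l)$ have abutting supports of total width $2^{l+2}$ in index units, twice that of $S(G_l)$, and even restricted to the support of $\tilde B_{j,r}$ their sum places $g^l_0$ at the endpoints and $g^l_{2^l}$ at the center, i.e.\ the arrangement opposite to that of $S(G_l)$ (for $l=0$ it is a V, not a tent). The correct bookkeeping --- which you do gesture at in your final paragraph --- is carried out one half of the enlarged support at a time: on, say, the right half one compares the zero-padded half of $\tilde B_{j,r}$ with the single full copy $OB_{+}$ of $S(G_l)$ living there, which is precisely the pair $I_0(G_l)$ versus $S(G_l)$ appearing in \ref{eq:gplus}, and the factor $\sqrt{2}/2$ arises from $\|G_l\|_2^2=\tfrac12\|S(G_l)\|_2^2$. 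With that local correction your induction closes, and the inner-product formula then follows from \ref{lem:inn} and disjointness of supports exactly as you describe.
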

\begin{proof}
It is clear that $OB_{i,l}(t)$ is  defined non-zero  only over $[\xi_{i,l}^L, \xi_{i,l}^R]$ with the center point $\xi_{i,l}^C$ and the structure as discussed in \ref{eq:knots1} and \ref{eq:recst}. 
Consider the change of variables
$$
u=2^{2l}
\left(
t-2^{l-N}(2i-1)
\right)
$$
in
\begin{align*}
\int_{\xi_{i,l}^L}^{\xi_{i,l}^R} OB_{i,l}^2(t)~dt = \int_{-2^l}^{2^l} S(G_l)^2(u)~du=\|S(G_l)\|^2_2=1.
\end{align*}
The formula for the inner product follows by the same change of variables. 
\end{proof}
\begin{proof}[Proof of \ref{prop:appr}]
Let us note that 
\begin{equation*}
\|OB_{i,l+1}-\tilde{B}_{i,l+1}\|^2_2
=2(1-\langle OB_{i,l+1}, \tilde{B}_{i,l+1}\rangle).
\end{equation*}
Further, the orthogonal decomposition in \ref{eq:bigD} yields
\begin{align*}
\tilde{B}_{i,l+1}&=\left(\tilde{B}_{i,l+1}-
\langle \tilde{B}_{r_i,l+1}, OB_{r_i,l}\rangle  OB_{r_i,l}
-\langle 
\tilde{B}_{i,l+1}^j, OB_{r_i+1,l}\rangle  OB_{r_i+1,l}
\right)\\
& +\langle \tilde{B}_{i,l+1}, OB_{r_i,l}\rangle  OB_{r_i,l} +\langle \tilde{B}_{i,l+1}, OB_{r_i+1,l}\rangle  OB_{r_i+1,l},
\end{align*}
where $r_i=2(2i-1)$ and thus
\begin{multline*}
\left\|\tilde{B}_{i,l+1}-\langle \tilde{B}_{i,l+1}, OB_{r_i,l}\rangle  OB_{r_i,l}
-
\langle 
\tilde{B}_{i,l+1}, OB_{r_i+1,l}\rangle  OB_{r_i+1,l}
\right\|^2_2=\\
=1
 -2\langle \tilde{B}_{i,l+1}, OB_{r_i,l}\rangle^2.
\end{multline*}
Thus
\begin{align*}
OB_{i,l+1}
=
\frac{ \tilde{B}_{i,l+1}-
\langle \tilde{B}_{i,l+1}, OB_{r_i,l}\rangle  OB_{r_i,l}
-\langle 
\tilde{B}_{i,l+1}, OB_{r_i+1,l}\rangle  OB_{r_i+1,l}}{
\sqrt{
1
 -2\langle \tilde{B}_{i,l+1}, OB_{r_i,l}\rangle^2
 }
 }
\end{align*}
which leads to
\begin{align*}
\langle OB_{i,l+1},\tilde{B}_{i,l+1} \rangle
&=
\frac{1-
\langle \tilde{B}_{i,l+1}, OB_{r_i,l}\rangle^2  
-\langle 
\tilde{B}_{i,l+1}, OB_{r_i+1,l}\rangle^2}{
\sqrt{
1
 -2\langle \tilde{B}_{i,l+1}, OB_{r_i,l}\rangle^2
 }
 }\\
 &=
\sqrt{
1
 -2\langle \tilde{B}_{i,l+1}, OB_{r_i,l}\rangle^2
 }.
\end{align*}
Using \ref{lem:iden} and \ref{lem:bound} we obtain
\begin{align*}
\langle OB_{i,l+1},\tilde{B}_{i,l+1} \rangle
&=
\sqrt{
1
 -2\langle \tilde{B}_{i,l+1}^j, OB_{r_i,l}^j\rangle^2
 }
 =
\sqrt{
1
 -2\langle G_l,\tilde{G}_l\rangle^2
 }
.
\end{align*}
Consequently
\begin{equation*}
\|OB_{i,l+1}-\tilde{B}_{i,l+1}\|^2_2
=2
\left(1-\sqrt{1-2\langle G_l,\tilde{G}_l\rangle^2}\right)=\frac{4\langle G_l,\tilde{G}_l\rangle^2}{1+\sqrt{1-2\langle G_l,\tilde{G}_l\rangle^2}}.
\end{equation*}
Then the upper bound follows from \ref{ineq:bound}. 

For the case of arbitrary $r>l$, we have by the triangle inequality
\begin{align*}
\|OB_{i,r}-\tilde{B}_{i,r}\|_2&\le \|OB_{i,r}-\tilde{B}^{r-1}_{i,r}\|_2+\|\tilde{B}^{r-1}_{i,r}-\tilde{B}^{r-2}_{i,r}\|_2+ \dots +\\
&+\|\tilde{B}_{i,r}^{l+2}-\tilde{B}_{i,r}^{l+1}\|_2+\|B_{i,r}^{l+1}-\tilde{B}_{i,r}\|_2,
\end{align*}
where $\tilde{B}_{i,r}^{l+s}$, $s=1,\dots r-l-1$ denote the $B$-splines that have the same shape as the orthogonormalized splines $OB_{i,l+s}$ while centered at the same knot as $OB_{i,r}$ and $\tilde{B}_{i,r}$.
By applying the first part of the result but to the level $r$ each term has the corresponding bound formulated for the level $l+s$ instead for $l$ yielding  
\begin{align*}
\|OB_{i,r}-\tilde{B}_{i,r}\|_2&\le  \frac{2\sqrt{2(4-\sqrt{14})}}{a}\left(\left( \frac a4\right)^{2^{l}}+\left( \frac a4\right)^{2^{l+1}}+\dots +\left( \frac a4\right)^{2^{r-1}}\right).
\end{align*}
\end{proof}
\begin{proof}[Proof of \ref{cor:totb}]
We note that 
\begin{multline*}
\left \|
y-\sum_{r=0}^{l} \sum_{i=1}^{2^{N-r-1}} 
 \langle y, OB_{i,r}\rangle OB_{i,r} 
-\sum_{j=l+1}^{N-1} \sum_{i=1}^{2^{N-j-1}} 
\langle y, \tilde{B}_{i,j} \rangle \tilde{B}_{i,j} 
\right \|_2=
\\
=\left \|
\sum_{r=l+1}^{N-1} \sum_{i=1}^{2^{N-r-1}} 
\left (
 \langle y, OB_{i,r}\rangle OB_{i,r} 
-
\langle y, \tilde{B}_{i,r} \rangle \tilde{B}_{i,r} \right )
\right \|_2 \le
\\
\le 
\sum_{r=l+1}^{N-1} \sum_{i=1}^{2^{N-r-1}} 
\left(
 \left | \langle y, OB_{i,r}\rangle \right |
 \left \| OB_{i,r} 
-\tilde{B}_{i,r} \right \|_2
 +
\left |
 \langle y, OB_{i,r}-\tilde{B}_{i,r} \rangle\right |
 \right)
 \le 
 \\
 \le 2 \|y\|_2
\sum_{r=l+1}^{N-1} \sum_{i=1}^{2^{N-r-1}} 
 \left \| OB_{i,r} 
-\tilde{B}_{i,r} \right \|_2
\le
 \\
 \le \frac{2^{N+1}\sqrt{2(4-\sqrt{14})}}{a} 
\left( \frac a4\right)^{2^{l}}
\sum_{r=l+1}^{N-1} \frac{\sum_{s=0}^{r-l-1} \left( \frac a4\right)^{2^{s}}}{2^{r+1}} 
\|y\|_2.
\end{multline*}
where in the last line we used the bound from \ref{prop:appr}. 
\end{proof}
\begin{proof}[Proof of \ref{prop:compeff}]
We consider dyadic case for which $n=k2^{N}-1$ and the number of $B$-splines to be orthogonalized is $n-k+1$. 
For the Gram-Schmidt one sided orthogonalization, say left-to-right, the first $k+1$ splines requires $0,1,\dots, k$ inner products, respectively, with respect to the terms that are on the left-hand-side of each.
The remaining $n-2k$ requires the constant number $k$ of inner products due to disjointness of the supports of $B$-splines that are separated by at least $k$ $B$-splines. The total number of the inner product is thus $(n-2k)k +1+\dots +k =nk-2k^2+k(k+1)/2=nk-3 k^2/2 +k/2$.

For a splinet, orthogonalization of each $k$-tuplet requires $k(k-1)/2$ inner products. Thus the bottom row in the dyadic structure with $N$ rows (containing $2^{N-1}$ $k$-tuplets) requires $2^{N-1}k(k-1)/2$ inner products in the orthogonalization procedure. There are $2^{N-1}-1$ $k$-tuplets above the bottom row that needs to be also orthogonalized in the first run of the recurrence in {\sc Step 4} of \ref{alg:DySp0}. Each of them needs to be orthogonalized only with respect to two $k$-tuplets at the bottom row. Thus each of these not orthogonalized $k$-tuplets requires $2k^2$ inner products for the total $2k^2(2^{N-1}-1)$. 
Consequently the total number of the inner products that is required at this step of the recurrence is
$$
2^{N-1}k(k-1)/2+2k^2(2^{N-1}-1)=2^{N-2}\left(5k^2-k\right)-2k^2.
$$
In each run of the loop the above count for the dyadic structure of the size reduced by one and thus the total count is
\begin{align*}
\sum_{j=1}^{N-1} \left(2^{N-j-1}\left(5k^2-k\right)-2k^2\right)&=\frac{5k^2-k}2  \sum_{j=1}^{N-1}2^j-2(N-1)k^2\\
&=\frac{5k^2-k}2 \left(2^{N-1}-2\right)-2(N-1)k^2\\
&=\frac{5k-1}2 k2^{N-1}-\left({3k^2-k}+2Nk^2\right).
\end{align*}
Thus the stated rate follows from the relations $k2^N=n+1$ and $N=(\log(n+1)-\log k)/2$. 
\end{proof}
\section{The dyadic algorithm in the Hilbert space setting}
\label{app:dyad}
Here, we elaborate the details of \ref{alg:genalg} that is presented in \ref{subsec:genapp}.
The only missing part is full explanation of the output $(\bar{\mathbf A}, \bar{\mathbf H})$ from $\mathcal D(\tilde{\mathbf H}, \tilde N)$. 
Since the part corresponding to the orthonormalization of the `lowest' level in the dyadic structure, i.e.  the columns in $\bar{\mathbf A}_{\cdot,\mathcal J}$ and submatrix $ \bar{\mathbf H}_{\mathcal J, \mathcal J}$ have been already defined, it remains to define the rest of the entries of $\bar{\mathbf A}$ and $ \bar{\mathbf H}$. 

For this let us define a vector of indices $\mathcal I$ as all these in $(1,\dots, \tilde d)$ that are not in $\mathcal J$. 
The undefined entries of  $\bar{\mathbf A}$ are  the columns in $\bar{\mathbf A}_{\cdot,\mathcal I}$. 
It will be convenient to discuss this the terms of a given set linearly independent vectors $\tilde{\mathcal X}=\left\{\tilde x_r, r=1,\dots, \tilde d\right\}$ with a band Gram matrix $\tilde{\mathbf H}$.
Let $\tilde{\mathbf x}_{i,\tilde N-1}=\left\{\tilde x_{(2i-1)k-k+1},\dots, \tilde x_{(2i-1)k}\right\}$, $i=1,\dots, 2^{\tilde N-1}$. 
The columns  $\bar{\mathbf A}_{\cdot,\mathcal J}$ represent the orthonormal vectors 
$$\mathbf y_{i,\tilde N-1}=\mathcal G(\tilde{\mathbf x}_{i,\tilde N-1}),$$
 $i=1,\dots, 2^{\tilde N-1}$ in the basis given by $\tilde{\mathcal X}$. 
Consider  $\mathbf B_{i,\tilde N-1}$  that is defined as the output from \ref{alg:isgso} with the input $\mathbf I_k$ and $\tilde{\mathbf H}_{i,\tilde N-1}$, which is the Gram matrix for $\tilde{\mathbf x}_{i,\tilde N-1}$, i.e. 
$$
\tilde{\mathbf H}_{i,\tilde N-1}=\tilde{\mathbf H}_{(2i-2)k,(2i-2)k}^{k,k},
$$ 
where $\tilde{\mathbf H}_{r,s}^{t.u}=[\tilde h_{r+i,s+j}]_{i=1,j=1}^{t,u}$.
Then   $\mathbf B_{i,N-1}$ represents  $\mathbf y_{i,\tilde N-1}$ in the basis given through $\tilde{\mathbf x}_{i,\tilde N-1}$, i.e. for $i=1,\dots, 2^{\tilde N-1}$, $ m=1,\dots,k$:
$$
 y^m_{i,\tilde N-1}=\sum_{j=1}^k  B^{j,m}_{i,\tilde N-1} \tilde x^j_{i,N-1}=\sum_{j=1}^k  B^{j,m}_{i,\tilde N-1} \tilde x_{(2i-2)k+j}.
$$

Next we orthogonalize the vectors in $\{\tilde x_i,i=1,\dots, d\}\setminus \bigcup_{i=1}^{2^{\tilde N-1}}\tilde{\mathbf x}_{i,\tilde N-1}$ with respect to the space spanned on the `lowest' level vectors $\bigcup_{i=1}^{2^{\tilde N-1}}\mathbf y_{i,\tilde N-1}$.
Namely, for $i=1,\dots,2^{\tilde N-1}-1$, $l=1,\dots,k$:
$$
\bar{x}_{(2i-1)k+l}\stackrel{def}{=}\tilde {x}_{(2i-1)k+l}- \left(\mathbf P_{i}+\mathbf P_{i+1}\right)\tilde {x}_{(2i-1)k+l},
$$ 
where  $\mathbf P_{i}$ is projection to the space spanned by the $k$-tuplet $\mathbf y_{i,\tilde N-1}$.
We observe that 
\begin{align*}
\mathbf P_{i} \tilde {x}_{(2i-1)k+l}&=\sum_{m=1}^k \langle y_{i,\tilde N-1}^m,\tilde{x}_{(2i-1)k+l}\rangle y_{i,\tilde N-1}^m\\
&=\sum_{m=1}^k
\left(\sum_{j=1}^k  B^{j,m}_{i,\tilde N-1} \tilde h_{(2i-2)k+j,(2i-1)k+l}\right)
 y_{i,\tilde N-1}^m
 \\
&=\sum_{m=1}^k
\left(  \mathbf B_{i,\tilde N-1}^\top \tilde{\mathbf H}_{(2i-2)k,(2i-1)k}^{k,k}\right)_{m,l}
\sum_{j=1}^k  B^{j,m}_{i,\tilde N-1} \tilde x_{(2i-2)k+j}
 \\
&=\sum_{j=1}^k  \sum_{m=1}^k
\left(  \mathbf B_{i,\tilde N-1}^\top \tilde{\mathbf H}_{(2i-2)k,(2i-1)k}^{k,k}\right)_{m,l}
B^{j,m}_{i,\tilde N-1} \tilde x_{(2i-2)k+j}
 \\
&=\sum_{j=1}^k 
\left( \mathbf B_{i,\tilde N-1}\mathbf B_{i,\tilde N-1}^\top \tilde{\mathbf H}_{(2i-2)k,(2i-1)k}^{k,k} \right)_{j,l}
\tilde x_{(2i-2)k+j}.
\end{align*}
Similarily,
\begin{align*}
\mathbf P_{i+1} \tilde {x}_{(2i-1)k+l}&=\sum_{m=1}^k \langle y_{i+1,\tilde N-1}^m,\tilde{x}_{(2i-1)k+l}\rangle y_{i+1,\tilde N-1}^m\\
&=\sum_{m=1}^k
\left(\sum_{j=1}^k  B^{j,m}_{i+1,\tilde N-1} h_{2ik+j,(2i-1)k+l}\right)
\sum_{j=1}^k  B^{j,m}_{i+1,\tilde N-1} \tilde  x_{2ik+j}
\\
&=\sum_{j=1}^k 
\left( \mathbf B_{i+1,\tilde N-1}\mathbf B_{i+1,\tilde N-1}^\top \tilde{ \mathbf H}_{2ik,(2i-1)k}^{k,k} \right)_{j,l}
\tilde x_{2ik+j}.
\end{align*}
Thus 
\begin{align*}
\bar{x}_{(2i-1)k+l}=&-\sum_{j=1}^k 
\left( \mathbf B_{i,\tilde N-1}\mathbf B_{i,\tilde N-1}^\top \tilde{\mathbf H}_{(2i-2)k,(2i-1)k}^{k,k} \right)_{j,l}
\tilde x_{(2i-2)k+j}\\
&+\tilde {x}_{(2i-1)k+l} \\
&- \sum_{j=1}^k 
\left( \mathbf B_{i+1,\tilde N-1}\mathbf B_{i+1,\tilde N-1}^\top \tilde{ \mathbf H}_{2ik,(2i-1)k}^{k,k} \right)_{j,l}
\tilde x_{2ik+j}.
\end{align*}
In the matrix representation of $\bar{x}_{(2i-1)k+l}$'s in the basis $\tilde{\mathcal X}$ we define columns in $\bar{\mathbf A}_{\cdot ,(2i-1)k}^{\cdot, k}$ through
\begin{align*}
\bar{\mathbf A}_{(2i-2)k,(2i-1)k}^{k,k}&= -\mathbf B_{i,\tilde N-1}\mathbf B_{i,\tilde N-1}^\top \tilde{\mathbf H}_{(2i-2)k,(2i-1)k}^{k,k},\\
\bar{\mathbf A}_{2ik-1,(2i-1)k}^{,k}&=\mathbf I_k,\\
\bar{\mathbf A}_{2ik,(2i-1)k}^{k,k}&=-\mathbf B_{i+1,\tilde N-1}\mathbf B_{i+1,\tilde N-1}^\top \tilde{\mathbf H}_{2ik,(2i-1)k}^{k,k},
\end{align*}
and zero everywhere else. This defines  $\bar{\mathbf A}_{\cdot, \mathcal I}$ and thus completes the definition of $\bar{\mathbf A}$. 

It remains to define $\bar{\mathbf H}_{\mathcal I, \mathcal I}$, which is done through 
$$
\bar{\mathbf H}_{\mathcal I, \mathcal I}=\left(\bar{\mathbf A}_{\cdot, \mathcal I}\right)^\top \tilde{\mathbf H} \bar{\mathbf A}_{\cdot, \mathcal I}.
$$

\section{Numerical implementation}
\label{sec:ni}
Gain in the efficiencies  by using our methods can be fully utilized only if a proper approach to the spline calculus is taken. 
In particular, the efficiency obtained due to the small supports for the $O$-splines in a splinet. If two splines are having in common only small portion of their supports, then the inner product between them needs to be evaluated only over this common support. 
Thus for splines with small support  evaluations become computationally less demanding. 
To utilize all these features, we have implemented in our computational implementation a spline object that contains information about the range of a spline support and we have  utilized this information in the computational procedures, see \ref{subsec:redsup}. 
The details of our implementation is the subject of the remaining part of this work.

The splines are functional objects that, for a given set of knots and a given order,  form a finite dimensional functional space. 
They can be represented in a variety ways. In \cite{Qin} a general matrix representation was proposed that allows for efficient numerical processing of the operations on the splines. 
This was utilized in Zhou et al. (2012) \cite{Zhou} and Reed \cite{Redd} to represent and orthogonalize $B$-splines that was implemented in the R-package \href{https://CRAN.R-project.org/package=orthogonalsplinebasis}{\it Orthogonal B-Spline Basis Functions}. 
In our approach we propose to represent a spline in a different way.
Namely, we focus on the values of the derivatives at knots and the support of a spline.  
The goal is to achieve better numerical stability as well as to utilize the discussed efficiency of base splines having support only on  small portion of the considered domain.
Our approach constitutes the basis for spline treatment in the package {\tt splinets} that accompanies this work.
In the discussion below we present mathematical foundations of this approach from the perspective of the numerical implementation.

\subsection{Fundamental isomorphic relation}
In our numerical representation of splines, we use the fundamental fact that for a given order, say $k$, and a vector of knot points $\boldsymbol \xi=\left(\xi_0,\dots, \xi_{n+1}\right)$, the splines are uniquely defined by  the values of the first $0,\dots, k$ derivatives at the knots.
Here, by a natural convention that we often use, the $0$ derivative is the function itself. 
The values of derivatives at the knots allow for the Taylor expansions at the knots but they cannot be taken arbitrarily due to the smoothness at the knots. 
Since our computational implementation of the spline algebra fundamentally depends on the relation between the matrix of derivatives values and the splines the restrictions for the derivative matrix needs to be addressed.

For any spline function $S$ of order $k$ over the $\boldsymbol \xi$ we define an object
$$
\mathcal S_0 (S)=\left\{k, \boldsymbol \xi, \mathbf s_0,\mathbf s_1, \dots, \mathbf s_k\right\},
$$
where $\mathbf s_j=(s_{0j},\dots, s_{n+1j})$ is an $n+2$-dimensional  vector (column) of values of the $j^{\rm th}$-derivative of $S$ at the knots given in $\left(\xi_0,\dots, \xi_{n+1}\right)$, $j=0,\dots, k$. 
These columns are kept in a $(n+2)\times (k+1)$ matrix $\mathbf S$. 
\begin{equation}
\label{eq:spmat}
\mathbf S\stackrel{def}{=}\left [ \mathbf s_0 \mathbf s_1 \dots  \mathbf s_k \right]
\end{equation} 
Since the derivative of the $k^{\rm th}$ order is not continuous at the knots and constant between knots, one needs some convention how to keep its values in $\mathbf S$.
There are different ways of doing this and here we consider two alternatives, one sided and symmetric.
In the one sided one consider the RHS (LHS) limits for the $k$-th derivative at the knots. 
In  the symmetric approach one  consider the RHS limits for the LHS half of the knots and the LHS for the RHS half of the knots. 
The second approach leads to more natural treatment of the splines with the zero boundary condition and whenever this version of the matrix of values and derivatives is used the superscript  $s$ is us used in the notation.
More specifically, $\mathbf S^{s}$ has the same first $k$ columns as $\mathbf S$.
However the last column of $\mathbf S^{s}$  is obtained from the last column of  $\mathbf S$ by shifting the zero from the last position to the middle position in the odd $n$ case,   while in the even $n$ case duplicating the value at the position with index $i={k~n/2}$ in the next position and shifting all the remaining values for $i>n/2$ by one and thus eliminating the zero at the last position.

\subsubsection{One sided approach}
The one sided approach will be illustrated by the LHS-to-RHS case and the other version can be treated in an analogous way. 
The value of the $k$th derivative at a knot is consider as the right hand side limit except for the last knot $\xi_{n+1}$ where it is assumed to be equal to zero, as there are no values on the right hand side of $\xi_{n+1}$.
In general,  $\mathbf S$ lies in the $(n+2)(k+1)$ dimensional linear space of $(n+2)\times (k+1)$ matrices.
Howevcer, the matrices corresponding to legitimate splines occupy only a proper subspace that correspond to the $(n+1)(k+1)-kn=k+n+1$ dimensional space of splines, see also \ref{subsec:knbc}. 
This restricted subspace can be expressed by relations that entries of $\mathbf S$ need to satisfy.
For obtaining them explicitly,  we note that the spline in interval $(\xi_i,\xi_{i+1}]$, $i=0,\dots, n$, is given through its Taylor expansion
$$
S(t)=\sum_{l=0}^k \frac{(t-\xi_{i})^l}{l!}s_{i l}. 
$$
Similarly, its $r^{\rm th}$, $r=1,\dots,k$ derivative is given through
$$
S^{(r)}(t)=\sum_{l=0}^{k-r} \frac{(t-\xi_{i})^l}{l!}s_{i r+l}. 
$$
The relations that restrict admissible matrices come from the smoothness conditions that require that all the derivatives up to the order $k-1$ have to be equal at the internal knots.
Consequently, we have additional $k (n+1) $ relations
$$
S^{(r)}(\xi_{i+1})=s_{i+1,r}, ~ i=0,\dots, n, ~ r=0,\dots,k-1,
$$
which  translate to expressions for the entries of $\mathbf S$:
\begin{equation}
\label{eq:addk-1n}
\begin{split}
s_{i+1k-1}&=s_{ik-1}+(\xi_{i+1}-\xi_{i})s_{ik},\\
s_{i+1k-2}&=s_{ik-2}+(\xi_{i+1}-\xi_{i})s_{ik-1}+\frac{(\xi_{i+1}-\xi_{i})^2}{2}s_{ik},\\
\hspace{3mm} &\hspace{3cm} \vdots\\
s_{i+1~0}&=s_{i0}+ (\xi_{i+1}-\xi_{i})s_{i1}+\dots + \frac{(\xi_{i+1}-\xi_{i})^k}{k!}s_{ik},
\end{split}~~~~ i=0,\dots, n.
\end{equation}
Thus from the matrix space dimension $nk+2k+n+2$ by the virtue of the $(n+1)k$ linear equation of \ref{eq:addk-1n} we reduce the dimension to $n+k+2$. 
The final restriction comes from the fact that we always assume $s_{n+1k}=0$ so the dimension of admissible $\mathbf S$ is $n+k+1$, as expected. 

The importance of the derived relations for numerical implementation of the spline objects is two-fold. 
Firstly, they can be used to define splines through specifying their derivatives values.
Secondly, in intense computational applications where computations are performed on the entries of the matrix $\mathbf S$, often due to numerical inaccuracies the matrix entries cease to satisfy \ref{eq:addk-1n}, in which the case some corrections of computational results need to be addressed and the discussed equations can be utilized for the purpose. 

In principle, if one wants to obtain a spline by setting the derivatives at knots, among $nk+2k+n+2$ possible values of derivatives at the knots one can only choose freely $n+k+1$. 
For example, if one sets the values of all $k+1$ derivatives at one of the knots, say, the first one, then the values of the $k$ derivatives at the next knot to it are directly determined and only the right hand side $k$th derivative can be arbitrarily chosen at this knot. 
The same applies to every other knot and thus one needs to set values of the first (or any other) row and the last column which leads to $n+k+1$ values (the last entry in the last column is always zero)  to be set and all the remaining values can be obtained by solving for the remaining entries using \ref{eq:addk-1n}. 

In the practical context of approximating a function, it can be natural to set the values of a spline at all knots (the values of the $0$-order derivative), then there remain only $k-1$ possible values to choose among the rest of derivatives at the knots.
Thus the $n+2$ values of the first column in $\mathbf S$ are given. 
Since the order $k$ of a spline is usually a small number comparing to the number of knots, one could choose these $k-1$ values simply by setting some values of  the first derivatives at selected knots (in the column $\mathbf s_1$).  
All the remaining entries of matrix $\mathbf S$ can be then obtained by solving the above equations for the yet unknown entries. 

To utilize matrix computations, it is convenient to represent the \ref{eq:addk-1n} in a matrix algebra format
\begin{equation}
\label{eq:new}
\boldsymbol \Pi\mathbf S \mathbf P= 
\sum_{i=0}^{n} \mathbf E_i \mathbf S \mathbf R \mathbf A_{\xi_{i+1}-\xi_i}
\end{equation} 
where $\boldsymbol \Pi$ is a $(n+1)\times (n+2)$-matrix, while $\mathbf P$ and $\mathbf R$ are   $(k+1)\times k$ matrices given by

\begin{align}
\label{eq:threem}
\boldsymbol \Pi&= 
\begin{bmatrix} 
-1 &       1 &           0 &  \dots    & 0 \\ 
0  &     -1  &          1  &\ddots    &\vdots  \\
\vdots & \vdots  & \ddots & \ddots & 0 \\ 
0  &    0 &    \dots  &    -1        & 1 
\end{bmatrix},\hspace{3mm}
\mathbf P= 
\begin{bmatrix} 
    1 &           0 &  \dots    & 0 \\ 
      0  &          1  & \ddots   & \vdots  \\
 \vdots  & \ddots & \ddots &0  \\ 
   0 &    \dots  &    0        & 1  \\
   0 & \dots &0 & 0
\end{bmatrix},
\hspace{3mm}
\mathbf R= 
\begin{bmatrix} 
    0&           0 &  \dots    & 0 \\ 
      1  &          0  & \ddots   & \vdots  \\
 \vdots  & \ddots & \ddots &0  \\ 
   0 &    \dots  &    1       & 0  \\
   0 & \dots &0 & 1
\end{bmatrix}.
\end{align}
Moreover, $\mathbf E_i$'s, $i=0,\dots,n$,  are  $(n+1)\times (n+2)$ matrices and, for a number $\alpha$,  $\mathbf A_\alpha$ is a $ k\times k$ is lower triangular Toeplitz matrix, that are given through 
\begin{equation}
\label{eq:A}
\mathbf E_i=
\begin{bmatrix} 
\mathbf 0 \\ 
\vdots  \\ 
\mathbf e_1^T \makebox[0in]{~~~\hspace{20mm}$ \leftarrow i+1$}  \\
\vdots  \\ 
\mathbf 0  
\end{bmatrix}~~~~\hspace{1.2cm},~~~~
\mathbf A_{\alpha}=\begin{bmatrix} 
\alpha  & 0 &0 &  \cdots &0 \\
\frac{\alpha^2}2 & \alpha & 0 &  \cdots &0 \\
\frac{\alpha^3}{3!} & \frac{\alpha^2}2 &\alpha &  \ddots & 0 \\
\vdots & \vdots  & \ddots & \ddots &\vdots \\
\frac{\alpha^k}{k!} & \frac{\alpha^{k-1}}{(k-1)!} & \cdots & \frac{\alpha^2}2 & \alpha \\
\end{bmatrix},
\end{equation}
where $\mathbf e_1$ is   canonical vector  $(1,0,\dots,0)$ treated as a  $(n+2)\times 1$ column.

\subsubsection{Two sided approach}
To preserve the symmetry with respect to the RHS and LHS endpoints, one can alternatively assume that  the value is the RHS $k$th derivative at the knots $\xi_i$, $i \le n/2$ and  the LHS derivative at the knots $\xi_i$, $i \ge n/2 +1$. 
If $n$ is odd, $n=2l+1$ for some integer $l$,  the undefined yet value of $s_{l+1k}$ is assumed to be zero and the LHS and RHS values of the $k$-derivatives at $\xi_{l+1k}$ coincide with $s_{lk}$ and $s_{l+2k}$, respectively. 
If $n$ is even, $n=2l$, then $s_{lk}=s_{l+1k}$. 
If both the knots given in $\boldsymbol \xi$ and the order $k$ are fixed, one can identify a spline $S$ with $\mathbf S$
and thus we write $\mathcal S_0(S)=\mathbf S^s$.
In what follows we write the entries of $\mathbf S^s$ as $s_{ij}$ as in $\mathbf S$ but we assume that the last column is symmetrically modified as previously described.
Next we discuss some additional relations that the entries of  $\mathbf S^s$ have to satisfy for this matrix to correspond to a spline. 

Again, although $\mathbf S^s$ lies in the $(n+2)(k+1)$ dimensional linear space of matrices, the matrices corresponding to legitimate splines occupy only a proper subspace that correspond to the $(n+2)(k+1)-kn-k-1=k+n+1$ dimensional space of splines. 
This restricted subspace can be expressed by linear relations that the entries of $\mathbf S$ need to satisfy.
For obtaining them explicitly,  we note that the spline in interval $(\xi_i,\xi_{i+1}]$, $i=0,\dots, n$, is given through its Taylor expansions
$$
S(t)=\sum_{j=0}^{k-1} s_{i j} \frac{(t-\xi_{i})^j}{j!}+s_{i+\delta_i\,k}\frac{(t-\xi_{i})^k}{k!}=\sum_{j=0}^{k-1} s_{i+1 l} \frac{(t-\xi_{i+1})^j}{j!}+s_{i+\delta_i\,k}\frac{(t-\xi_{i+1})^k}{k!},
$$
where $\delta_i=\mathbb I_{(n/2,n+1]}(i)$ (here $\mathbb I_A$ is the indicator function of a set $A$). 
Similarly, its $r^{\rm th}$ derivative, $r=1,\dots,k$, is given through
\begin{align*}
S^{(r)}(t)&=\sum_{j=0}^{k-r-1} s_{i\, r+l} \frac{(t-\xi_{i})^j}{j!} +s_{i+\delta_i\,k}\frac{(t-\xi_{i})^{k-r}}{(k-r)!}
\\& =
\sum_{j=0}^{k-r-1} s_{i+1r+ l} \frac{(t-\xi_{i+1})^j}{j!}+s_{i+\delta_i\,k}\frac{(t-\xi_{i+1})^{k-r}}{(k-r)!}. 
\end{align*}

In the spirit of the symmetry of the two sided approach we divide the knots into the left and  introduce the following notation for the right half ones
\begin{equation}
\label{eq:RHSk}
\left(\xi_0^{R},\dots, \xi_{m+1}^{R}\right)=\left(\xi_{n+1},\dots, \xi_{n-m}\right),
\end{equation}
where $m=[n/2]$. We observe that for $n$ even we have $\xi_{m}=\xi^{R}_{m+1}$ and $\xi_{m+1}=\xi^{R}_{m}$, or otherwise  $\xi_{m+1}=\xi^{R}_{m+1}$.
Let us also define the RHS portion of the matrix $\mathbf S^s$ as 
$$
s_{ij}^{R}=s_{n+1-i\,j}, ~~ i=0,\dots, m+1, ~~ j=0,\dots, k.
$$
The restrictive relations following from the above Taylor expansions can be split into  the LHS and the RHS knots, for  $i=0,\dots, m, ~~r=0,\dots, k-1$ as follows
\begin{equation*}
\label{eq:addk-1nL}
\begin{split}
s_{i+1r}=\sum_{j=0}^{k-r} \frac{(\xi_{i+1}-\xi_{i})^{j}}{j!}s_{i~j+r},& ~~~s^R_{i+1r}=\sum_{j=0}^{k-r} \frac{(\xi^R_{i+1}-\xi^R_{i})^{j}}{j!}s^R_{i~j+r}.
\end{split}
\end{equation*}
We observe that for even $n$ the relations for the knots $\xi_{m+1}$ and $\xi^R_{m+1}$ are equivalent twice due to the overlap  $\xi_{m}=\xi^{R}_{m+1}$ and $\xi_{m+1}=\xi^{R}_{m}$. 
Consequently, the number of the above equations for even number $n=2m$ is equal to $2(m+1)k-k= nk + k$ yielding the dimension of the matrices satisfying them equal to $(n+2)(k+1)-1-nk - k=n+k+1$ as required (in this case we assumed that $s_{m k}=s_{m+1 k}$). 
Similarly for the odd number $n=2(m-1)$, the total count of equations  is again $2(m+1)k= nk +k$ also yielding the correct dimension of the matrices.  

In the previous matrix notation this can be equivalently written as 
\begin{equation}
\label{eq:LR}
\boldsymbol \Pi\mathbf S^L \mathbf P= 
\sum_{i=0}^{m} \mathbf E_i \mathbf S^L \mathbf R \mathbf A_{\xi_{i+1}-\xi_i}, ~~ \boldsymbol \Pi\mathbf S^R \mathbf P= 
\sum_{i=0}^{m} \mathbf E_i \mathbf S^R \mathbf R \mathbf A_{\xi^R_{i+1}-\xi^R_i},
\end{equation} 
where $\mathbf S^L$ is made of the first $m+1$ rows of $\mathbf S^s$, $\mathbf S^R=[s_{ij}^R]_{i=0,j=0}^{m+1,k}$, except the bottom-right term for each of the matrices is set to zero, $\mathbf P$,  $\mathbf R$ are $(k+1)\times k$ matrices defined earlier in \ref{eq:threem}, $\mathbf A_{\alpha}$ is the $k\times k$ matrix defined in \ref{eq:A},  while  $\boldsymbol \Pi$ and $\mathbf E_i$ are $(m+1)\times (m+2)$ matrices defined as before only with $n$ replaced by $m$. 

\subsubsection{Splines with the boundary conditions}
Symmetric treatment of the $k$th derivative is particularly useful when one approach to the splines by setting boundary conditions. 
If one notices that at the end points $\xi_0$ and $\xi_{n+1}$ there are in total $2k$ values if we exclude the $k$ derivatives at these points, then by setting these values one can only choose $n+1-k$ values from the remaining values in the matrix $\mathbf S^s$. 
For example, one can arbitrarily choose $n+1-k$ out of $n$ values of the $k$th derivative at the internal points and the rest of the matrix $\mathbf S$ is uniquely determined. 
Alternatively, one can choose $n+1-k$ out of $n$ values of the function at the internal points. 
It is natural to express these additional $k-1$ restrictions by splitting the knots to $m-k+1$ LHS and RHS knots and the central knots ($2k$ of them for the even $n$ case and $2k+1$ for odd $n$), the matrix $\mathbf S^s$ into corresponding three parts $\mathbf S^L$, $\mathbf S^R$ and $\mathbf S^C$.

More precisely, let $m=[n/2]-k$ and define the RHS knots as in \ref{eq:RHSk}.
The RHS and LHS matrices $\mathbf S^R$ and $\mathbf S^L$ are defined as before for this new value of $m$ (the lower right corner values are set to zero).
Assuming that at each of the two initial knots we set the derivatives up to the $k-1$st order, there are $m+1$ additional values than can be chosen, for each of the two matrices (for example values of the function at the internal knots and the last knot or, alternatively,  the values of the $k$th derivatives at the internal knots and the first knot). The central knots are defined as the set of two sets of knots defined around the central knots as follows
$$
\left(\xi_0^{CL},\dots, \xi_{k}^{CL}\right)=\left(\xi_{m+1},\dots, \xi_{m+k+1}\right),
~~~ 
\left(\xi_0^{CR},\dots, \xi_{k}^{CR}\right)=\left(\xi_{n-m},\dots, \xi_{n-m-k}\right).
$$ 
We note that for even $n$ the last two central-left knots coincide with the last two central-right. 
The central knots have the corresponding central matrices of the derivatives defined by the entries $s_{ij}^{CL}$, $s_{ij}^{CR}$, $i=0,\dots, k$, $j=0,\dots, k$. 
The complete set of the restrictions that define admissible spline matrices is comprised of \ref{eq:LR} for the newly defined  $\mathbf S^R$ and $\mathbf S^L$  and an analog for the central matrices 
\begin{equation}
\label{eq:CLR}
\widetilde{\boldsymbol \Pi} \mathbf S^{CL} {\mathbf P}
= 
\sum_{i=0}^{k-1} 
\widetilde{\mathbf E_i} \mathbf S^{CL} {\mathbf R} {\mathbf A}_{\xi^{CL}_{i+1}-\xi^{CL}_i}, ~~ 
\widetilde{\boldsymbol \Pi} \mathbf S^{CR} {\mathbf P}
= 
\sum_{i=0}^{k-1} \widetilde{\mathbf E_i} \mathbf S^{CR} {\mathbf R} {\mathbf A}_{\xi^{CR}_{i+1}-\xi^{CR}_i},
\end{equation} 
where $\widetilde{\boldsymbol \Pi}$ and  $\widetilde{\mathbf E_i}$'s are $k \times (k+1)$ matrices  defined as before in \ref{eq:threem} and \ref{eq:A} but with $n$ replaced by $k-1$.

As mentioned earlier our preference is to consider the space of the $k$th order splines for which the derivatives of the order smaller than $k$  are set to zero at the two endpoints.
Therefore, each of the two matrices $\mathbf S^L$ and $\mathbf S^R$ are set by specifying $m+1$ of their entries. 
Then the $k$th derivatives up to the $k-1$st order at each $\xi_0^{CL}$ and $\xi_0^{CR}$ that appear also in $\mathbf S^L$ and $\mathbf S^R$ are set as well. 
Only the $k$th derivative at each these two points is not set by the choice of $\mathbf S^L$ and $\mathbf S^R$.
Thus for each of these two matrices we have $(k+1)^2-k-1=k^2+k$ entries (extra unit is extracted since the lower-right corner is set to zero) further  restricted by \ref{eq:CLR}. 
We observe there are $k^2$ equations in each of the two matrix equations in \ref{eq:CLR}. 
This restriction leaves $2k$ free parameters that are further restricted by $k$ conditions because of the duplication of relations at the central knot(s).
We conclude that one can choose $k$ parameters for the central matrix $\mathbf S^{C}$, for example one can set the derivatives of the order smaller than $k$ at (one of) the central knot(s). 

\begin{example}
In this example we use one-sided notation for the spline matrices of values and derivatives although we deal here with splines with zero boundary conditions. 
Thus the last row of the corresponding matrix is always equal to zero. 
Let consider the zero$^{\rm th}$ and first order $B$-splines with the boundary conditions  $B_{0,l}^{\boldsymbol \xi}$,  $B_{1,r}^{\boldsymbol \xi}$, where $l=0,\dots, n$, $r=0,\dots, n-1$. 
Then the corresponding $(n+2)\times 1$ and $(n+2)\times 2$ matrices are
\begin{equation*}
\mathbf S^{(0,l)}=\begin{bmatrix} 0 \\ \vdots \\ 0 \\ 1 \makebox[0in]{~~~\hspace{14mm}$\leftarrow l+1$} \\ 0 \\ \vdots\\  0
\end{bmatrix} \hspace{1.2cm} ,~~l\le n, \hspace{3mm} 
\mathbf S^{(1,r)}=\begin{bmatrix} 
0 & 0  \\
 \vdots & \vdots \\
  0 &  \frac{1}{\xi_{r+1}-\xi_{r}}\\ 
  1 & \frac{-1}{\xi_{r+2}-\xi_{r+1}} \makebox[0in]{~~~\hspace{28mm}$\leftarrow r+2$, $r < n$.} \\ 
  0  &  0\\
   \vdots & 
  \vdots \\  0 & 0
\end{bmatrix}
\end{equation*} 
\end{example}

Using the recurrent relation \ref{eq:recder} between the derivatives of $B$-splines, one can generalize the recurrence between matrix representation of the $B$-splines seen in the above example to the arbitrary order of splines. 
Using one sided representation of the splines, we have 
\begin{multline}
\label{eq:matrec}
\mathbf S_{\cdot j}^{(k,l)}=\frac{1}{\xi_{l+k}-\xi_l}\left({j}\cdot \mathbf S_{\cdot j-1}^{(k-1,l)} + \boldsymbol \Lambda_l  \mathbf S_{\cdot j}^{(k-1,l)} \right)
+\\
+
\frac{1}{\xi_{l+1}-\xi_{l+k+1}}\left({j}\cdot \mathbf S_{\cdot j-1}^{(k-1,l+1)} + \boldsymbol \Lambda_{l+k+1}  \mathbf S_{\cdot j}^{(k-1,l+1)} \right),
\end{multline} 
where $l=0,\dots, n-k$, $j=0,\dots , k$ and the diagonal $(n+1)\times (n+1)$ matrices $\boldsymbol \Lambda_l$'s have $(\xi_0-\xi_l , \dots , \xi_n-\xi_l)$ on the diagonal.
Here we assume that if $j=k$, then $\mathbf S_{\cdot j}^{(k-1,l)}$ is a column made of zeros as the $k^{\rm th}$ derivatives of the $(k-1)^{\rm th}$ order spline is always zero.

We know that the operation between the splines and the corresponding matrix preserves the linear transformation, i.e. linear combinations of splines correspond to linear combinations of corresponding matrices. These and other basic properties are summarized in the result, the proof of which is obvious. 
\begin{proposition} Let 
$S$ and $\tilde S$ be splines given by the spline objects
\begin{align*}
\mathcal S(S)&=\left\{k, \boldsymbol \xi, \mathbf s_0,\mathbf s_1, \dots, \mathbf s_{k}\right\},~~
\mathcal S(\tilde S)=\left\{\tilde k, \boldsymbol \xi, \tilde {\mathbf s}_0,\tilde {\mathbf s}_1, \dots, \tilde {\mathbf s}_{\tilde k}\right\},
\end{align*}
$\tilde k \le k$. 
Then the spline properties are expressed in the terms of the spline objects as follows
 \begin{itemize}
 \item[i)] Linearity:
  $$
 \mathcal S(\alpha S+\tilde \alpha \tilde S)=\left\{k, \boldsymbol \xi, \alpha \mathbf s_0+ \tilde \alpha \tilde {\mathbf s}_0,\alpha \mathbf s_1+\tilde \alpha \tilde{\mathbf s}_1, \dots,\alpha \mathbf s_{\tilde k}+ \tilde \alpha \tilde{\mathbf s}_{\tilde k}, \alpha \mathbf s_{\tilde k+1},\dots, \alpha \mathbf s_{k}\right\}
 $$
 \item[ii)] Differentiation:
 $$
 \mathcal S(S')=\left\{k-1, \boldsymbol \xi, \mathbf s_1, \dots, \mathbf s_{k}\right\},
 $$
 \item[iii)] Multiplication:
 $$
\mathcal S(S\cdot \tilde S)=\left\{k+\tilde k, \boldsymbol \xi, \mathbf s^p_0,\dots, \mathbf s^p_{k+\tilde k}\right\},
 $$
 where for $u=0,\dots, k+\tilde k$, we have
 \begin{align*}
 \mathbf s_u^p= \left(\sum_{j=0}^u \binom{u}{j} \mathbf s_j \cdot \tilde{\mathbf s}_{u-j} \right)
 \end{align*}
 \end{itemize}
\end{proposition}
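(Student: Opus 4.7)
The plan is to verify each of the three claims by direct computation at the knots, using that the spline object $\mathcal S(S)=\{k,\boldsymbol \xi,\mathbf s_0,\ldots,\mathbf s_k\}$ records, in its $j$-th column, the values of $S^{(j)}$ at the knots. From the between-knot Taylor expansion already discussed, a spline of order $k$ over $\boldsymbol \xi$ is uniquely determined by its derivative matrix (once the convention for the top column at a knot has been fixed), so it suffices to check column-by-column that the right-hand side of each identity lists the correct derivative values of the spline on the left.

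Parts (i) and (ii) are immediate from linearity of differentiation and a column-shift. For (i), $(\alpha S+\tilde\alpha \tilde S)^{(j)}(\xi_i)=\alpha\,s_{ij}+\tilde\alpha\,\tilde s_{ij}$ for $j\le \tilde k$, while for $\tilde k<j\le k$ the spline $\tilde S$ has order $\tilde k$ so $\tilde S^{(j)}\equiv 0$ and only $\alpha\mathbf s_j$ survives; the result is still piecewise polynomial of degree $k$ with the smoothness inherited from $S$, hence a spline of order $k$. For (ii), $S'$ is piecewise polynomial of degree $k-1$ with $(S')^{(j)}=S^{(j+1)}$ for $j=0,\ldots,k-1$, so the columns of $\mathcal S(S')$ are exactly $\mathbf s_1,\ldots,\mathbf s_k$, and $\mathbf s_k$ (piecewise constant) plays the role of the top-derivative column for $S'$.

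Part (iii) follows from the Leibniz rule $(S\tilde S)^{(u)}=\sum_{j=0}^u\binom{u}{j}S^{(j)}\tilde S^{(u-j)}$. Evaluated at each knot $\xi_i$, the scalar product $S^{(j)}(\xi_i)\,\tilde S^{(u-j)}(\xi_i)$ is the $i$-th entry of the Hadamard product $\mathbf s_j\cdot \tilde{\mathbf s}_{u-j}$, giving the claimed expression for $\mathbf s_u^p$; terms with $j>k$ or $u-j>\tilde k$ vanish by the order argument used in (i). Since $S\tilde S$ is piecewise polynomial of degree $k+\tilde k$, it is a spline of order $k+\tilde k$ in the sense used in the paper.

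The only point requiring any care is the convention for the top derivative column at knots where the $k$-th (resp.\ $\tilde k$-th, resp.\ $(k+\tilde k)$-th) derivative is discontinuous: for (i) and (iii) the right-hand side is built from the same one-sided (or symmetric) limits that define the left-hand side, so the conventions match automatically, and for (ii) the shift just promotes $\mathbf s_k$ to the top column with the same side convention as before. No substantive obstacle arises, which is why the authors describe the proof as obvious.
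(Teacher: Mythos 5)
Your proposal is correct and matches the paper's intent: the paper offers no written proof, declaring the result ``obvious,'' and your direct column-by-column verification (linearity of differentiation for (i), index shift for (ii), Leibniz rule evaluated at the knots for (iii)) is exactly the argument being alluded to. The care you take with the top-derivative-column convention is a reasonable addition but does not change the substance.
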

\begin{remark}
The formula for the product of two splines involves evaluation of many convolutions. In practical, implementation it is faster to utilize smaller dimension of the matrix representation of a spline and utilize it in computations. Namely, for a product of two splines the values at the knots are directly obtained from taking coordinate-wise product of the first columns in the matrices $\mathbf S_1$ and $\mathbf S_2$. Then one can evaluate one row (for example the first one).  using the convolution the corresponding rows in  $\mathbf S_1$ and $\mathbf S_2$ as in the above result. All the remaining terms can be obtained by solving simple linear relations given in \ref{eq:addk-1n}. 
\end{remark}

Finally,  we consider how the topology induced by the inner product in the space of splines can be expressed in the terms of the coefficients of the matrices. 
\begin{proposition}
\label{prop:topo}
Let $\tilde{\mathcal S}$ be the $ n + k + 1$ dimensional space  of $(n+2)\times (k+1)$ matrices $\mathbf S$ as in \ref{eq:spmat} satisfying \ref{eq:addk-1n} (or, equivalently,  \ref{eq:new}). 
For $\mathbf S,\tilde{\mathbf S} \in \tilde{\mathcal S}$ in this space let us define the inner product 
$$
 \langle \mathbf S, \tilde{\mathbf S}  \rangle 
 =
\begin{pmatrix} 1 & \frac 12 & \dots & \frac 1{2k+1} \end{pmatrix} 
\sum_{i=0}^n  
(\mathbf F_{i\cdot} \cdot \mathbf S_{i\cdot})*({\mathbf F_{i\cdot} \cdot \tilde{\mathbf S}}_{i\cdot}),
$$
where $\mathbf F$ is a matrix with entries $f_{ij}=(\xi_{i+1}-\xi_{i})^{j+1/2}/j!$, $j=0,\dots , k$, $i=0,\dots, n$. Here we use the following notations and conventions:  for two  $r\times 1$ vectors $\mathbf v$ and $\mathbf w$,  their convolution is  a  $(2r-1)\times 1$ vector defined by 
$$
\mathbf v * \mathbf w=
\left(\sum_{m=(p-r+1)\vee 1}^{p\wedge r} v_{p-m+1}w_{m}
\right)_{p=1}^{2r-1}
,
$$ 
 while $\mathbf v \cdot \mathbf w$ is coordinate-wise multiplication of vectors. Moreover, for a matrix $\mathbf X$,  its  $i$th row is denoted by $\mathbf X_{i\cdot}$. 
 
Then $\tilde{\mathcal S}$ equipped with this inner product is isomorphic with the space of splines of the $k$th order spanned over the knots $\xi_0,\dots, \xi_{n+1}$ equipped with the standard inner product of the square integrable functions.  
\end{proposition}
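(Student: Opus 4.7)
The plan is to verify two things: (i) that the linear map $\Phi : \tilde{\mathcal{S}} \to \mathrm{Splines}_k(\boldsymbol{\xi})$ sending a matrix $\mathbf{S}$ to its associated spline $S$ is a bijection, and (ii) that under this map the inner product defined on $\tilde{\mathcal{S}}$ coincides with the $L^2$ inner product of the images. The bijectivity part is essentially already set up: the space $\tilde{\mathcal{S}}$ has dimension $n+k+1$ by the count given after equation \ref{eq:addk-1n}, and the spline space has the same dimension; the map $\Phi$ is injective because if all derivatives at the knots vanish then each polynomial piece vanishes, and the dimension count forces surjectivity. So the only real work is the inner-product identity.

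For that, my plan is to decompose the $L^2$ inner product as a sum over the between-knot intervals,
\begin{equation*}
\langle S,\tilde S \rangle_{L^2} = \sum_{i=0}^n \int_{\xi_i}^{\xi_{i+1}} S(t)\tilde S(t)\,dt,
\end{equation*}
then on each interval use the Taylor expansion around $\xi_i$ that underlies the matrix representation,
\begin{equation*}
S(t) = \sum_{j=0}^k \frac{(t-\xi_i)^j}{j!}\, s_{ij}, \qquad \tilde S(t) = \sum_{m=0}^k \frac{(t-\xi_i)^m}{m!}\, \tilde s_{im}.
\end{equation*}
Substituting $u=t-\xi_i$ and writing $h_i=\xi_{i+1}-\xi_i$, a straightforward computation gives
\begin{equation*}
\int_{\xi_i}^{\xi_{i+1}} S(t)\tilde S(t)\,dt = \sum_{p=0}^{2k}\frac{h_i^{p+1}}{p+1}\sum_{j+m=p}\frac{s_{ij}\tilde s_{im}}{j!\,m!}.
\end{equation*}

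The last step is to recognize the inner double sum as the $p$th entry of the convolution of the two row vectors $\mathbf F_{i\cdot}\cdot \mathbf S_{i\cdot}$ and $\mathbf F_{i\cdot}\cdot \tilde{\mathbf S}_{i\cdot}$. Indeed with $f_{ij}=h_i^{j+1/2}/j!$, the product of the two vectors has $j$th entry $h_i^{j+1/2}s_{ij}/j!$, so that the contribution of the degrees $j+m=p$ to the convolution picks up exactly the factor $h_i^{p+1}/(j!\,m!)$; multiplying by the row vector $(1,1/2,\dots,1/(2k+1))$ then inserts the weights $1/(p+1)$, matching the integrated formula above. Summing over $i=0,\dots,n$ yields the claimed identity and completes the isomorphism.

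The only real obstacle is bookkeeping: reconciling the $1$-indexed convolution convention $(\mathbf v*\mathbf w)_p=\sum_m v_{p-m+1}w_m$ with the $0$-indexed Taylor coefficients and verifying that the half-integer exponents in $\mathbf F$ combine as $h_i^{(j+1/2)+(m+1/2)}=h_i^{p+1}$ to produce the correct interval length factor. Once this indexing is untangled, the identity is a direct calculation and the isomorphism follows immediately.
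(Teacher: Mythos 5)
Your proposal is correct and follows essentially the same route as the paper: decompose the $L^2$ inner product over the between-knot intervals, Taylor-expand each factor at $\xi_i$, integrate term by term to get the $h_i^{p+1}/(p+1)$ weights, and regroup the double sum by total degree $p=j+m$ so that it becomes the stated convolution of $\mathbf F_{i\cdot}\cdot\mathbf S_{i\cdot}$ with $\mathbf F_{i\cdot}\cdot\tilde{\mathbf S}_{i\cdot}$. The only (harmless) difference is that you also spell out the bijectivity/dimension-count part, which the paper's proof leaves implicit and only verifies the isometry.
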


\begin{proof}
For a given set of knots $\boldsymbol \xi$ let $S$ and $\tilde S$ be the (unique) splines such that $\mathcal S_0 (S)= \mathbf S$ and $\mathcal S_0 (\tilde S)=\tilde{\mathbf S}$.
Then 
\begin{align*}
\langle S, \tilde{S}\rangle 
&= 
\sum_{i=0}^n \int_{\xi_{i}}^{\xi{i+1}} S(t) \tilde S(t)~ dt\\
&=
\sum_{i=0}^n \int_{\xi_{i}}^{\xi{i+1}} 
\sum_{j=0}^k s_{ij} \frac{(t-\xi_i)^j}{j!}
\sum_{j=0}^k \tilde s_{ij} \frac{(t-\xi_i)^j}{j!}
~ dt
\\
&=
\sum_{i=0}^n  
\sum_{j,r=0}^k \frac{s_{ij}\tilde s_{ir}}{j!r!} \int_{\xi_{i}}^{\xi{i+1}} (t-\xi_i)^{j+r} ~dt
\\
&=
\sum_{i=0}^n  
\sum_{j,r=0}^k \frac{s_{ij}\tilde s_{ir}}{j!r!} \frac{(\xi_{i+1}-\xi_i)^{j+r+1}}{j+r+1}
\\
&=
\sum_{i=0}^n  
\sum_{l=0}^{2k}\frac{(\xi_{i+1}-\xi_i)^{l+1}}{l+1}\sum_{m=(l-k)\vee 0}^{l\wedge k} \frac{s_{il-m}\tilde s_{im}}{(l-m)!m!}
\\
&=
\sum_{i=0}^n  
\sum_{l=0}^{2k}
\frac{1}{l+1}
\sum_{m=(l-k)\vee 0}^{l\wedge k} 
\frac{
 s_{il-m}(\xi_{i+1}-\xi_i)^{l-m+1/2}
}{
(l-m)!
}
\frac{
\tilde s_{im}(\xi_{i+1}-\xi_i)^{m+1/2}
}{
m!
},
\end{align*}
which show the isometry property of the mapping $S_0$. 
\end{proof}

\subsection{Reduced support} \label{subsec:redsup}
For our bases of splines, frequently the support is contained only in some in between knots intervals and outside the support the values of the derivatives are zero. 
To utilize this in efficient computations, we modify the notation and representation of a spline
$$
\mathcal S(S)=\left\{k, \boldsymbol \xi, i, m, \mathbf s_0,\mathbf s_1, \dots, \mathbf s_k\right\},
$$
where $[\xi_i, \xi_{i+m+1}]$ is the support of $S$ and  $\mathbf s_0,\mathbf s_1, \dots, \mathbf s_k$ are $m+2$ dimensional vectors of values of the $j$-derivative of $S$ at the knots given in $\left(\xi_i,\dots, \xi_{i+m+1}\right)$, $j=0,\dots, k$. 
Let us introduce an $(m+2)\times (k+1)$ matrix $\mathbf S=[\mathbf s_0 \mathbf s_1  \dots \mathbf s_k]$.
We consider shorter notation if the usually fixed $\boldsymbol \xi$ and $k$ that is implicitly given in $\mathbf S$ are both removed from the notation
$$
\mathcal S(S)=\left\{i, m,\mathbf S \right\}.
$$

The actual form of the spline $S$ within each interval, i.e. polynomials of the order of the spline, can be easily obtained by Taylor's expansions at either of the endpoints. 
Namely, if $x\in [\xi_{i+r},\xi_{i+r+1}]$, $r=0,\dots, m$, then 
$$
S(x)=\sum_{l=0}^k  \frac{(x-\xi_{i+r})^l}{l!} s_{rl} =\sum_{l=0}^k  \frac{(x-\xi_{i+r+1})^l}{l!} s_{r+1l}.
$$

We note the following properties of this representation of splines that parallel the ones obtained in the previous secton.
\begin{proposition} Let 
$S$ and $\tilde S$ be splines given by the spline objects
\begin{align*}
\mathcal S(S)&=\left\{i, m, \left[\mathbf s_0 \mathbf s_1 \dots \mathbf s_{k}\right] \right\},~~
\mathcal S(\tilde S)=\left\{ \tilde i, \tilde m, \left[ \tilde {\mathbf s}_0\tilde {\mathbf s}_1 \dots \tilde {\mathbf s}_{k}\right]\right\}.
\end{align*}
Then the spline properties are expressed in the terms of the spline objects as follows
 \begin{itemize}
 \item[i)] Linearity:
  $$
 \mathcal S(\alpha S+\tilde \alpha \tilde S)=\{\bar i,\bar m,\bar {\mathbf S}\},
 $$
 where $\bar i= i \wedge \tilde i$, $\bar m=\left( (m +i) \vee (\tilde m+\tilde i)\right) -\bar i$ and a $(\bar m +2)\times (k+1)$ matrix of the derivatives at knots is given by 
 $$
\bar{\mathbf S}=\mathbf S_0 + \tilde{\mathbf S}_0,
 $$
 where  $(\bar m +2)\times (k+1)$ matrices $\mathbf S_0$ and  $\tilde{\mathbf S}$ are extended from $\mathbf S$ and $\tilde{\mathbf S}$ by having rows of zeros at these indexes $i \in [\bar i,\bar i+\bar m+1]$ where the derivatives of $S$ or $\tilde{ S}$, respectively, are equal to zero at $\xi_i$. 
 \item[ii)] Differentiation:
 $$
 \mathcal S(S')=\left\{i,m,\left[ \mathbf s_1 \dots \mathbf s_{k}\right]\right\},
 $$
 \item[iii)] Multiplication:
 $$
\mathcal S(S\cdot \tilde S)=\left\{\bar i,\bar m,  \mathbf s^p_0,\dots, \mathbf s^p_{2k}\right\},
 $$
 where $\bar i= i\vee \tilde i$ and $\bar m = \left( (m +i) \wedge (\tilde m +\tilde i) \right) -\bar i$, while for $u=0,\dots, 2k$, we have
 \begin{align*}
 \mathbf s_u^p= \left(\sum_{j=0}^u \binom{u}{i} \mathbf s_j^- \cdot \tilde{\mathbf s}^-_{u-j} \right),
 \end{align*}
 where $\mathbf s_j^-$'s and $\tilde{\mathbf s}_j^-$'s are $\bar m+2$ column vectors obtained from $\mathbf s_j$'s and $\tilde{\mathbf s}_j$' by removing these entries for which either the corresponding entry in $\mathbf s_j$ or in $\tilde{\mathbf s}_j$ is equal to zero. 
 \item[iv)] Inner product: \\
For  the standard inner product of the square integrable functions, the notation of \ref{prop:topo} and of the previous item, we have
$$
 \langle S, \tilde{S}  \rangle 
 =
\begin{pmatrix} 1 & \frac 12 & \dots & \frac 1{2k+1} \end{pmatrix} 
\sum_{i=0}^{\bar m}  
(\mathbf F_{\bar i + i\cdot} \cdot \mathbf S^-_{i\cdot})*({\mathbf F_{\bar i + i\cdot} \cdot \tilde{\mathbf S}}^-_{i\cdot}),
$$
with $(\bar m+ 2) \times (k+1)$ matrices   $\mathbf S^-=\left[ \mathbf s_0^- \mathbf s_1^- \dots \mathbf s_k^-\right]$ and  $\tilde{\mathbf S}^-=\left[ \tilde{\mathbf s}_0^- \tilde{\mathbf s}_1^- \dots \tilde{\mathbf s}_k^-\right]$.
 \end{itemize}
\end{proposition}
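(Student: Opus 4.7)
The plan is to reduce each of the four claims to the corresponding elementary fact about the unrestricted spline representation treated earlier (in particular \ref{prop:topo}), since the reduced-support object is merely a bookkeeping device that discards the zero portions of the derivative matrix. Throughout, I would work locally on each in-between-knots interval, using the Taylor expansions
$$
S(x) = \sum_{l=0}^k \frac{(x-\xi_{i+r})^l}{l!} s_{rl}
$$
already exploited in the proof of \ref{prop:topo}.

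For linearity, the first step is to embed both $\mathbf S$ and $\tilde{\mathbf S}$ into the common index range $[\bar i, \bar i + \bar m + 1]$ by padding rows of zeros at every knot where the spline and all its derivatives vanish; this extension represents exactly the same function. Since evaluation of a derivative at a knot is a linear functional on splines, the matrix of $\alpha S + \tilde\alpha \tilde S$ on the enlarged index range is $\alpha \mathbf S_0 + \tilde\alpha \tilde{\mathbf S}_0$. The containment of the support in the union of the two supports is immediate; cancellations may shrink it further, but the representation tolerates leading or trailing zero rows. Differentiation is even more direct: by definition the $j$th column records the $j$th derivative at knots, so deleting the first column and lowering $k$ to $k-1$ yields the representation of $S'$, and the support is unchanged because $S'$ may be nonzero at an endpoint of the support of $S$.

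For multiplication, the support of $S\tilde S$ clearly lies in the intersection of the individual supports, which is precisely $[\bar i, \bar i + \bar m + 1]$. At each knot in this intersection one applies Leibniz,
$$
(S\tilde S)^{(u)}(\xi) = \sum_{j=0}^u \binom{u}{j} S^{(j)}(\xi)\, \tilde S^{(u-j)}(\xi),
$$
which, read columnwise across all knots in the common support, is exactly the coordinate-wise product $\mathbf s_j^- \cdot \tilde{\mathbf s}_{u-j}^-$ inside the sum. This produces the stated formula and confirms that $S\tilde S$ is a spline of order $k+\tilde k$. For the inner product (iv), the integrand $S(t)\tilde S(t)$ vanishes on any inter-knot interval where either factor is zero, so the sum in \ref{prop:topo} collapses to indices $r$ inside the joint support. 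On each surviving subinterval the reduced column vectors $\mathbf s_j^-$, $\tilde{\mathbf s}_j^-$ supply the Taylor coefficients, and the factors of $\mathbf F$ carry the absolute knot index $\bar i + i$ because the inter-knot spacings are unchanged by restriction; substituting into the formula of \ref{prop:topo} yields the displayed expression.

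The only step that requires genuine care is the multiplication claim, where one must verify that the indices in $\mathbf s_j^-$ and $\tilde{\mathbf s}_{u-j}^-$ line up correctly across the common support after the rows corresponding to knots outside $[\bar i, \bar i + \bar m + 1]$ have been deleted. This alignment is forced by the uniform choice of $\bar i$ and $\bar m$ for both reduced vectors, so the bookkeeping works out. Everything else is essentially a restriction of arguments already used to identify splines with their matrices of derivative values at knots.
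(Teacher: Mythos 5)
Your proposal is correct and follows exactly the route the paper intends: the paper in fact prints no proof of this proposition at all (it is presented as the reduced-support ``parallel'' of the earlier representation proposition, whose proof is declared obvious, and of \ref{prop:topo}), and your reduction --- zero-padding to a common index range for linearity, column deletion for differentiation, the Leibniz rule read knot-by-knot for multiplication, and collapsing the sum in the proof of \ref{prop:topo} to the joint support for the inner product --- is precisely the argument being taken for granted. You also silently repair two apparent typos in the statement (the missing coefficients $\alpha,\tilde\alpha$ in $\bar{\mathbf S}$ and the binomial index $\binom{u}{i}$ versus $\binom{u}{j}$), which is consistent with the earlier, full-support version of the result.
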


\end{document}